\documentclass{amsart}

\usepackage{amsthm}
\usepackage{amssymb}
\usepackage{amsmath}
\usepackage{amsmath,amscd}
\usepackage{epsfig}
\usepackage{mathrsfs}
\usepackage{verbatim}

\newtheorem{theorem}{Theorem}
\newtheorem{corollary}{Corollary}
\newtheorem{lemma}{Lemma}
\newtheorem{proposition}{Proposition}

\newtheorem{definition}{Definition}
\newtheorem{remark}{Remark}

\newcommand{\mb}{\mathbb}
\newcommand{\mc}{\mathcal}
\newcommand{\wt}{\widetilde}
\newcommand{\ol}{\overline}

\newcommand{\wh}{\widehat}

\title{Weighted estimates for the  stability operator of  the helicoid on slowly varying domains}

\author{Stephen J. Kleene}
\address{Department of Mathematics, Brown University, Providence, RI 02906}

\begin{document}
\maketitle
\begin{abstract}
We consider the  poisson problem $\wt{\mc{L}} u  = E$ for the  operator $\wt{\mc{L}} = \Delta_{\mb{R}^2} + 2\cosh^{-2}(s)$ on domains of the form $\Lambda : = \{ (s, z): |s| \leq \ell(z) \}$, where the width $\ell(z)$  of the domain $\Lambda$ varies with $z$.   We prove the existence of solutions satisfying weighted estimates when the source term satisfies certain natural orthogonality conditions, and when  $e^{-\ell}$  is small and  slowly varying. 
\end{abstract}
\section{Introduction}
In this article, we consider the Poisson problem
\begin{align} \label{PoissonProblem}
\wt{\mc{L}} u = E
\end{align}
for the operator $\wt{\mathcal{L}} =  \Delta_{\mb{R}^2} + 2 \cosh^{-2} (s)$ on a domain $\Lambda$ of the form 
\begin{align}\label{LambdaDef}
\Lambda : = \{(s, z): |s| \leq \ell (z), z \in \mb{R} \}.
\end{align}
We prove the  existence of solutions to (\ref{PoissonProblem}) satisfying  weighted estimates when the source term $E$ satisfies certain natural orthogonality conditions (\ref{StrongOrthogonality}) arising from the non-positive spectrum of $\wt{\mc{L}}$, and where the width $\ell$ of the domain and the weight function  are controlled by a  \emph{slowly varying} scale function  (\ref{ZConvexityCondition}). We state a version of our main theorem in simple terms below:

\begin{theorem}\label{Prop:MainTheoremSimple}
There are constants $C > 0$ and $c > 0$  such that: Let $E: \Lambda \rightarrow \mb{R}$ be a locally $C^{0, \alpha}$ function on the domain  $\Lambda$ given by (\ref{LambdaDef}), where  $e^{- \ell}$ is suitably small and  slowly varying.  Assume that $E$ satisfies the locally weighted estimate $\left\| E \right\|_{0, \alpha} \leq \beta e^{- \ell}$ and is  strongly orthogonal to the lower eigenfunctions for the operator $\wt{\mc{L}}$. Then there is a locally $C^{2, \alpha}$ function $u: \Lambda \rightarrow \mb{R}$ solving (\ref{PoissonProblem}) on $\Lambda$ and satisfying
\[
\left\| u \right\|_{2, \alpha} \leq C \beta \ell^{c} e^{- \ell}.
\]
\end{theorem}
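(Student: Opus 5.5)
The plan is to reduce to a family of one-dimensional problems in $s$ by a local freezing argument, and then assemble the pieces with a partition of unity subordinate to the slowly varying scale.

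\textbf{Spectral picture of the cross-section.} Fix $z$ and consider $L_\ell = \partial_s^2 + 2\cosh^{-2}(s)$ on $[-\ell,\ell]$. I will take Dirichlet conditions, which is the natural choice here, though the argument below does not depend on it.
\begin{itemize}
\item The operator $-\partial_s^2 - 2\cosh^{-2}s$ on $\mb{R}$ has one negative eigenvalue, with eigenfunction $\cosh^{-1}s$ and eigenvalue $-1$.
\item It also has a zero resonance $\tanh s$, which is the Jacobi field coming from translations.
\item On $[-\ell,\ell]$ these become two low eigenfunctions $\phi_0,\phi_1$. Their eigenvalues are $-1+O(e^{-2\ell})$ and $O(\ell^{-1})$ or $O(e^{-2\ell})$; the odd mode is perturbed from $\tanh$.
\item The rest of the spectrum is bounded below by roughly $\pi^2/\ell^2$.
\end{itemize}
I interpret ``strongly orthogonal'' as $\int E(s,z)\phi_i(s;z)\,ds=0$ for each $z$ and $i=0,1$, possibly up to error terms.

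\textbf{Step 1: the frozen problem.} On the strip $|s|\le \ell_0$ with $\ell$ frozen at $\ell_0=\ell(z_0)$, decompose in the eigenbasis of $L_{\ell_0}$. Fourier transform in $z$ gives multipliers $(\lambda_k+\xi^2)^{-1}$ with $\lambda_k\gtrsim \ell_0^{-2}$ on the orthogonal complement of the low modes. This yields $L^2$ bounds with loss $\ell_0^2$. Combining with interior and boundary Schauder estimates and a De Giorgi/$L^\infty$ bound on unit boxes gives
\[
\|u\|_{2,\alpha}\le C\ell_0^{c}\|E\|_{0,\alpha}.
\]
The weighted version then follows. The weight $e^{-\ell}$ is essentially constant on $z$-intervals of length $\ell^{c'}$ because of the slow variation, and the exponential decay of the strip resolvent in $z$ localizes the solution. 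That decay is at rate $\sqrt{\lambda_1}\sim \ell^{-1}$, so it is polynomial in $\ell$ over such windows.

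\textbf{Step 2: patching.} Cover $\mb{R}_z$ by intervals $I_j$ of length $\ell_j^{c'}$, with a partition of unity $\psi_j$ and $|\psi_j'|\lesssim \ell_j^{-c'}$.
\begin{itemize}
\item Straighten each piece of $\Lambda$ over $I_j$ to the frozen strip by the map $s\mapsto s\ell_0/\ell(z)$. Slow variation of $\ell$ means $|\ell'|,|\ell''|$ are small, so this only perturbs the operator by a small error.
\item Project $\psi_jE$ onto the complement of the low modes. The projection error is controlled by $\psi_j'$ and $\ell'$, because $E$ is orthogonal slice by slice.
\item Solve by Step 1 and sum, obtaining an approximate solution $u_1$. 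The new error $E-\wt{\mc L}u_1$ consists of commutators $[\wt{\mc L},\psi_j]$ and change-of-variables terms. Their size is $\ell^{c}\cdot(\ell^{-c'}+\text{slow-variation constant})$ times $\beta e^{-\ell}$.
\item Choosing $c'$ large and the slowly varying constant small makes this error at most $\tfrac12\beta e^{-\ell}$.
\end{itemize}
The residual error is no longer orthogonal. I correct it by subtracting its low-mode components. The resulting low-mode ODEs in $z$ are either solved explicitly or absorbed into an adjusted statement, since the theorem assumes orthogonality. Iterating gives a convergent series.

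\textbf{Main obstacle.} The hard step is the iteration. The commutator errors reintroduce components along $\phi_0,\phi_1$, and these cannot be inverted with good bounds. Along the $\phi_1$ mode the operator is nearly $\partial_z^2$, which has no decay.

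My first approach is to choose the local solutions so that their own low-mode projections vanish slice by slice in the curved coordinates. I would do this by defining the orthogonality with respect to the exact slice eigenfunctions $\phi_i(\cdot;z)$. The error's projection then involves only $z$-derivatives of $\phi_i$ paired against $u$, and these carry factors of $\ell'$.

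Along the $\phi_0$ mode the $z$-ODE $\partial_z^2 a - a = f$ is invertible with exponential decay, so that mode can simply be solved. Only the $\phi_1$ component must be forced to vanish. I would do this by an integration-by-parts identity: pair $\wt{\mc L}u$ against the slice eigenfunction and use the slice orthogonality of $u$. This is where the precise form of the strong orthogonality condition (\ref{StrongOrthogonality}) and of the slow variation (\ref{ZConvexityCondition}) will be essential.
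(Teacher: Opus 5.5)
There is a genuine gap, and it lies in the mode you treat as harmless. Since $(\partial_s^2+2\cosh^{-2}s)\cosh^{-1}s=+\cosh^{-1}s$ (this is the eigenvalue $-1$ of $-\partial_s^2-2\cosh^{-2}s$ that you state yourself), one has $\wt{\mc{L}}\bigl(a(z)\cosh^{-1}(s)\bigr)=(a''+a)\cosh^{-1}(s)$. So the $z$-equation along $\phi_0$ is $a''+a=f$, not $a''-a=f$. This equation is resonant: its fundamental solution is $\tfrac12\sin|z|$, it has no bounded solution for $f=\cos z$, and blockwise solutions leave undamped oscillating tails that add up. Consequently the step ``that mode can simply be solved'' fails, and with it your plan for the low-mode part of the residual in the iteration.

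This is exactly why orthogonality to $\cosh^{-1}(s)$ is a hypothesis, and it is the delicate part of the paper (Proposition~\ref{Prop:CoshProjectors}). On blocks of length $h^{(\sigma)}_j=(\ell^{(\sigma)}_j)^{\sigma}$, one first kills the $\sin z$ and $\cos z$ moments of the $\cosh^{-1}$-density using $\varphi^{(\sigma)}_j\sin(z)\cosh(s)$ and $\varphi^{(\sigma)}_j\cos(z)\cosh(s)$; note $\wt{\mc{L}}(\sin z\cosh s)=2\sin z\cosh^{-1}s$. After this, the variation-of-parameters solution of $\ol{\Theta}_j''+\ol{\Theta}_j=\theta_j$ is supported in the block.

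By contrast, the $\tanh$ mode you call the main obstacle is benign. With your Dirichlet conditions the first odd eigenvalue is about $\pi^2/(4\ell^2)$, the same order as the rest of the spectrum, so that sector is coercive and needs no orthogonality at all. In the paper's Robin setting, where $\tanh$ is an exact zero mode, it is handled by killing two moments and integrating $\ol{\Theta}_j''=\theta_j$ twice.

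Correcting the sign is not enough, because the $\cosh^{-1}$ mode carries an exponential cost your scheme does not account for. Apply Green's identity against the Jacobi fields $\sin z\cosh^{-1}s$ and $\cos z\cosh^{-1}s$, which $\wt{\mc{L}}$ annihilates. For $w$ compactly supported in $z$, the $\sin z$- and $\cos z$-moments of the $\cosh^{-1}$-density of $\wt{\mc{L}}w$ become boundary integrals over $\partial\Lambda$ weighted by $\cosh^{-1}(\ell)$. Hence prescribing moments of size $m$ over a block of length $h$ forces $|w|+|\partial_\nu w|\gtrsim m\cosh(\ell)/h$ somewhere on $\partial\Lambda$. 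A blockwise correction of the $\cosh^{-1}$-density is therefore affordable only if each new error has $\cosh^{-1}$-density smaller than the error itself by a factor $\cosh^{-1}(\ell)$.

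The paper guarantees this by solving on fixed rectangles slightly wider than $\Lambda$, after the density-preserving extension of Lemma~\ref{ExtensionLemma}. Local solutions and truncation errors are then exactly orthogonal to $\cosh^{-1}(s)$ on the wide slices, so the density on $[-\ell(z),\ell(z)]$ only sees $|s|\ge\ell(z)$; this is Lemma~\ref{DualObjectProperties}(\ref{LocalESTARESTIMATE}). Your rescaling $s\mapsto s\ell_0/\ell(z)$ instead moves the core $|s|=O(1)$ where $\cosh^{-1}(s)$ is concentrated. It produces $\cosh^{-1}$-density errors of relative size $|\ell(z)/\ell_0-1|$, which is only polynomially small in $\lambda$ and in general far larger than $\cosh^{-1}(\ell)$. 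The required correction would then not be small compared with $\beta e^{-\ell}$.
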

 Our primary motivation for recording Theorem \ref{MainInvertibilityStatement} is its applications to constructing examples of singular  minimal laminations using PDE methods (see  \cite{K3}, \cite{K4}). The operator $\wt{\mc{L}}$ we consider here is the stability operator of the helicoid in its standard conformal parametrization up to the factor $\cosh^{-2}(s)$. However,  the methods presented here seem likely to  extend to other  operators and   are thus  of some independent interest. 
\subsection{Precise statement of the main theorem} 

Fix  a  positive function $\lambda: \mb{R} \rightarrow \mb{R}$ satisfying 
\begin{align}\label{ZConvexityCondition}
\left|\lambda' \right| \leq C_0 \lambda^{1 + \epsilon_0}
\end{align}
for constants $C_0 >0$ and $\epsilon_0 > 0$ as well as the  basic smallness condition
\begin{align}\label{BasicSmallness}
\sup \lambda \leq \ol{\lambda}
\end{align}
for a constant $\ol{\lambda}$ to be determined. A function satisfying (\ref{ZConvexityCondition}) is said to be \emph{slowly varying}.  The functions $\ell(z)$ describing the width of the domain $\Lambda$ as a function of $z$ will be given by
\begin{align}\label{Def:EllTau}
\ell = \ell_{\tau} : = \begin{cases}  \frac{\left(r_0\lambda^{- \tau \epsilon_0}\right) -1}{\tau \epsilon_0} & \tau > 0 \\  \\\log(r_0 \lambda) & \tau = 0 \end{cases} 
\end{align}
where $r_0 > 0$ is a fixed constant, and we will assume take $\ell = \ell_{\tau}$ for suitably small  $\tau \in [0, \ol{\tau}]$ for a suitably small constant $\ol{\tau}$. Observe that $\ell_{\tau}$ converges smoothly as $\tau \rightarrow 0$ to $\ell_0$, so the family $\ell_\tau$ in (\ref{Def:EllTau}) is continuous in $\tau$.  Condition (\ref{ZConvexityCondition}) implies that the  ratio $\left|\frac{\lambda(z)}{\lambda(z_0)}\right|$ is uniformly controlled when $|z - z_0| \approx \lambda^{- \epsilon_0}(z_0)$. We assume the weight function $\omega = \omega(z)$ satisfies the slightly more flexible condition
\begin{align}\label{WeightFunctionCondition}
\left|\frac{\omega(z)}{\omega(z_0)} - 1 \right| < \rho_0 < 1, \quad \forall |z - z_0| \leq \lambda^{-3 \epsilon_0/4}(z_0).
\end{align}
For a fixed constant $\rho_0 < 1$. This ensures not only that $\omega = \lambda$ is admissible weight function, but also a slightly more general class of weights derived from $\lambda$ that arise in this article, including those of the form $\ell^{p} \lambda^{q}$ for positive $p$ and $q$.  Finally, we assume the source term $E$ satisfies the orthogonality conditions
\begin{align}\label{StrongOrthogonality}
\int_{-\ell(z)}^{\ell(z)} E(s, z) \cosh^{-1}(s)ds = \int_{-\ell(z)}^{\ell(z)} E(s, z) \tanh(s)ds, \quad z \in \mb{R}.
\end{align}
Any function $E$ satisfying the conditions (\ref{StrongOrthogonality}) is said to be strongly orthogonal to $\cosh^{-1}(s)$ and $\tanh(s)$  on the domain $\Lambda$, which span the space of lower eigenfunctions for the operator $\wt{L} : = \partial_{ss} + 2 \cosh^{-2}(s)$.  More generally, the \emph{density} of a function $f$ in $E$ at $z$ is given by
\[
\int_{- \ell(z)}^{\ell(z)} E(s, z) f(s, z) d s,
\]
and we say that $E$ is strongly orthogonal to $f$ if the density of $f$ in $E$ vanishes for each $z$. 
 Below, $\|f\|_{k, \alpha}$ denotes the localized $C^{k, \alpha}$ holder norm for a function $f: \Lambda \rightarrow \mb{R}$, so
\[
\| f\|_{k, \alpha} (p) : = \| f: C^{k, \alpha} (\Lambda \cap D_p) \|
\]
where $D_p \subset \mb{R}^2$ denotes the unit disk in $\mb{R}^2$ centered at $p$. We can now precisely state our main result:
 
\begin{theorem}\label{MainInvertibilityStatement}
There are constants $C > 0$ and $c > 0$ and $\ol{\tau} > 0$ and $\ol{\lambda} > 0$ such that: Given a scale function $\lambda$ satisfying (\ref{ZConvexityCondition}) and (\ref{BasicSmallness}), a  weight function $\omega$ satisfying (\ref{WeightFunctionCondition}), a domain $\Lambda$ of the form (\ref{LambdaDef}) with $\ell = \ell_{\tau}$ for $\tau \in [0, \ol{\tau}]$,  and  a function $E: \Lambda \rightarrow \mb{R}$ satisfying the locally weighted estimate  $\| E\|_{0, \alpha} \leq \beta \omega$ for a constant $\beta \geq 0$  and  the strong orthogonality conditions (\ref{StrongOrthogonality}), there is a function $u: \Lambda \rightarrow \mb{R}$ with  $\| u\|_{2, \alpha} \leq C \beta \ell^{c}\omega$ and such that 
\[
\wt{\mc{L}} u = E.
\]
\end{theorem}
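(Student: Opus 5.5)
We will prove Theorem \ref{MainInvertibilityStatement} by a localization-and-iteration scheme: first solve the problem exactly on model domains where $\ell$ is constant, then patch the local solutions with cutoffs at the scale $\lambda^{-3\epsilon_0/4}$ where $\omega$ and $\ell$ are essentially frozen, and finally absorb the patching error by iteration.

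\emph{Model problem.} On the strip $\Lambda_L = \{|s|\le L\} \times \mb{R}$ with Dirichlet data on $|s|=L$, we separate variables in $s$ via the eigenfunctions of $\wt{L} = \partial_{ss} + 2\cosh^{-2}(s)$ on $[-L,L]$. For large $L$ the two lowest eigenvalues are close to $1$ and $0$, with eigenfunctions close to $\cosh^{-1}(s)$ and $\tanh(s)$. All remaining eigenvalues are at most $-c/L^{2}$. The strong orthogonality conditions (\ref{StrongOrthogonality}) say that $E$ is orthogonal to $\cosh^{-1}$ and $\tanh$ rather than to the true Dirichlet eigenfunctions, so we first correct this. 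The discrepancy between the two pairs of functions is $O(e^{-L})$. After projecting, the components of $E$ along the two low modes have size $O(e^{-L}\beta\omega)$ and can be handled directly by solving the corresponding ODE in $z$. The remaining high modes satisfy $(\partial_{zz} - \mu_k)u_k = E_k$ with $\mu_k \gtrsim L^{-2}$. Their Green's functions decay like $e^{-\sqrt{\mu_k}|z|}$ and have $L^\infty$ operator norm $\mu_k^{-1} \lesssim L^{2}$. This gives the $C^0$ bound $\|u\|_{0} \le C L^{c}\beta\sup\omega$, and Schauder estimates upgrade it to the $C^{2,\alpha}$ bound. The decay length of the Green's function is $\sim L$, which is much smaller than $\lambda^{-3\epsilon_0/4}$ once $\ol\lambda$ is small, because $L \sim \log(1/\lambda)$ when $\tau=0$ and $L \sim \lambda^{-\tau\epsilon_0}$ when $\tau$ is small. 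Therefore a source supported near $z_0$ produces a solution that is weighted-bounded by $\omega(z_0)$ and decays exponentially away from $z_0$.

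\emph{Patching.} Choose a partition of unity $\{\psi_j\}$ in $z$ subordinate to intervals $I_j$ of length $\sim\lambda^{-3\epsilon_0/4}(z_j)$. On each $I_j$, flatten $\Lambda$ to the strip of width $L_j = \ell(z_j)$ by the map $s \mapsto s\,\ell(z_j)/\ell(z)$. By (\ref{ZConvexityCondition}), $|\ell'| \lesssim \lambda^{\epsilon_0}\ell^{c}$ up to $\tau$-dependent powers, so the pulled-back operator differs from $\wt{\mc{L}}$ by terms of relative size $\ll \ell^{-c'}$. Restore the strong orthogonality of $\psi_j E$ on the flattened strip by subtracting its $\cosh^{-1}$ and $\tanh$ densities, and record the subtracted pieces. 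Solve the model problem for each piece, set $u_j$ to be the solution multiplied by a wider cutoff $\tilde\psi_j$, and define $u^{(1)} = \sum_j u_j$. The resulting error $E - \wt{\mc{L}}u^{(1)}$ consists of three parts: the commutator terms $[\wt{\mc{L}},\tilde\psi_j]$, which are small because the Green's function has decayed across $I_j$ and $\tilde\psi_j'$ is small; the flattening error; and the recorded density corrections. The density corrections sum to zero because $\sum_j\psi_j = 1$ and $E$ is strongly orthogonal, up to flattening errors. The weighted bound is preserved because (\ref{WeightFunctionCondition}) makes $\omega$ comparable to $\omega(z_j)$ across each patch.

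\emph{Iteration and main obstacle.} We then show $\|E - \wt{\mc{L}}u^{(1)}\|_{0,\alpha} \le \tfrac12\beta\omega$, after projecting the error back onto the strongly orthogonal class; the leftover is a low-mode term handled at each step by the same ODE argument. Iterating gives a convergent series $u = \sum u^{(n)}$ satisfying $\|u\|_{2,\alpha} \le C\beta\ell^{c}\omega$. I expect the main obstacle to be controlling the projection mismatch. The density corrections created by flattening and cutting off are only $O(\ell^{-1}\lambda^{\epsilon_0/4})$ small, but inverting them picks up the $L^{2}$ loss from the model problem. The smallness of $\ol\lambda$ and $\ol\tau$ has to beat the polynomial factors $\ell^{c}$, which works only because $\ell$ grows at most like a small power of $\lambda^{-1}$. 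My first approach here is to track the exponents explicitly and choose $\ol\tau < \epsilon_0/(8c)$.
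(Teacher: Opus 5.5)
\textbf{There is a genuine gap. It sits in the $\cosh^{-1}(s)$ direction, where your cost accounting is off by an exponential factor.}

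For a source $d(z)\cosh^{-1}(s)$ the reduced equation in $z$ is $g''+g=d$, and this equation is resonant. For compactly supported $d$ its solutions oscillate without decay; for $d$ spread over $\mb{R}$ they generally grow linearly. So ``solving the corresponding ODE in $z$'' does not work. For the same reason, the top Dirichlet mode of your model problem (eigenvalue $\approx 1$) does not decay away from the support of the source.

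This is intrinsic, not an artifact of the method. Since $\wt{\mc{L}}(\cos z\cosh^{-1}(s))=0$, Green's identity on $\Lambda\cap\{a\le z\le b\}$ gives, for any $u$ with $\wt{\mc{L}}u=F$, the bound $\bigl|\int_a^b\int_{-\ell}^{\ell}F\cos z\cosh^{-1}(s)\,ds\,dz\bigr|\lesssim\|u\|_{C^1}\bigl(1+e^{-\ell}(b-a)\bigr)$. So a $\cosh^{-1}$-density of size $\eta$ that is resonant on a window of length $T$ (e.g.\ $F=\eta\cos z\cosh^{-1}(s)$) forces $\|u\|_{C^1}\gtrsim\eta\min(T,e^{\ell})$, not $\ell^{2}\eta$. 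Bounded correctors can only create such moments through the region $|s|\approx\ell$, for instance via $\wt{\mc{L}}(\cos z\cosh s)=2\cos z\cosh^{-1}(s)$ (Lemma~\ref{Prop:CProjectors}), and this costs a factor $\cosh(\ell)$.

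Hence the iteration can only close if two things hold. First, the $\cosh^{-1}$-density of each new error must be $O(\cosh^{-1}(\ell))$ times a small factor. Second, you need a localized device that removes the $\sin z$, $\cos z$ moments before the ODE is solved.

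Your flattening $s\mapsto s\,\ell(z_j)/\ell(z)$ moves mass near $s=0$, where $\cosh^{-1}$ is concentrated. It therefore produces densities of relative size $\ell^{-1}\lambda^{\epsilon_0/4}$, which is your own estimate. This is larger than $e^{-\ell}$ by a factor $\approx\lambda^{\epsilon_0/4-1}/\ell$ at $\tau=0$ (large unless $\epsilon_0>4$), and by a super-polynomially large factor in $\lambda^{-1}$ when $\tau>0$. Taking $\ol{\tau}$ small only fights polynomial losses. So ``project the error, then solve the leftover low-mode ODE'' does not close.

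\textbf{How the paper handles this.} The paper's argument is organized around exactly this point, and it never flattens.
\begin{itemize}
\item Lemma~\ref{ExtensionLemma} extends $E$ to $\check{\Lambda}$ while preserving both densities.
\item Theorem~\ref{RBCSolutionsWeightedControl} is then applied on straight strips containing the patch. Its solutions satisfy Robin conditions adapted to $\tanh$ and $\cosh^{-1}$, and are themselves strongly orthogonal.
\item Consequently each $E^*_j$ is \emph{exactly} strongly orthogonal on the wide segments $[-\ol{\ell}_j-1,\ol{\ell}_j+1]$. Its $\cosh^{-1}$-density on $[-\ell,\ell]$ only sees $|s|\ge\ell$, which gives the decisive factor $\cosh^{-1}(\ell)$ in Lemma~\ref{DualObjectProperties}.
\item Proposition~\ref{Prop:CoshProjectors} then removes this density. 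On windows of length $h^{(\sigma)}_j=(\ell^{(\sigma)}_j)^{\sigma}$ it kills the $\sin z$, $\cos z$ moments using $\varphi^{(\sigma)}_j\sin z\cosh s$ and $\varphi^{(\sigma)}_j\cos z\cosh s$. After that, $\ol{\Theta}''+\ol{\Theta}=\theta$ has compactly supported solutions, and the $\cosh(s)$ growth is paid for by the $\cosh^{-1}(\ell)$ factor.
\end{itemize}
Any repair of your scheme needs both ingredients. You need errors whose $\cosh^{-1}$-density is controlled at the level $\cosh^{-1}(\ell)$, for example by keeping the approximate solution itself strongly orthogonal and controlling the boundary flux in Green's identity. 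You also need a localized $\cosh(s)$-corrector of this kind.

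\textbf{A smaller inaccuracy.} With Dirichlet data the eigenfunction near $0$ is $\tanh s\cos ks+k\sin ks$, with $k\approx\pi/(2L)$ and eigenvalue $\approx-\pi^{2}/(4L^{2})$. It is not an $O(e^{-L})$ perturbation of $\tanh$, so the corresponding component of $E$ is not $O(e^{-L})$. In your Dirichlet setting, however, that mode is simply invertible with $L^{2}$ loss, so only the $\cosh^{-1}$ direction is a real obstruction there.
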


\subsection{Proof of Theorem \ref{MainInvertibilityStatement}}

Theorem \ref{MainInvertibilityStatement} is  an application of the decay estimate for the Poisson Problem for the operator $\wt{\mc{L}}$ recorded in \cite{K2} which we state here for the convenience of the reader:
 
  \begin{theorem} \label{RBCSolutionsWeightedControl}
For any $\xi \in (0, 1/2)$, there is a constant $C = C(\xi)$ such that: Given a  constant $\ell > 1$ and a $C^{0, \alpha}$ function $E(s, z): [-\ell, \ell] \times \mb{R} \rightarrow \mb{R}$ supported on the strip $[- \ell, \ell] \times [-2 \pi, 2 \pi]$ with $\| E\|_{0, \alpha} \leq \beta$ and satisfying the strong orthogonality conditions (\ref{StrongOrthogonality}), there is a unique locally $C^{2, \alpha}$ function $u: [- \ell, \ell] \times \mb{R}\rightarrow \mb{R}$ solving  (\ref{PoissonProblem}) and satisfying the orthogonality conditions (\ref{StrongOrthogonality}) and such that: 
\begin{enumerate}
\item \label{ODDRBS} The odd part $u^{-}(s ,z) : = \frac{1}{2}\left(u(s, z) - u(-s, z) \right)$ of $u$ satisfies the boundary condition
\[
u^{-}_{, s}(\ell, z) \tanh(\ell) - u^{-}(\ell, z) \tanh'(\ell) = 0, \quad \forall z \in \mb{R}.
\]
\item \label{EVENRBS} The even part $u^{+}(s ,z) : = \frac{1}{2}\left(u(s, z) +u(-s, z) \right)$ of $u$ satisfies the boundary condition
\[
u^{+}_{, s}(\ell, z) \cosh^{-1}(\ell) - u^{+}(\ell, z) \left(\cosh^{-1}\right)'(\ell) = 0, \quad \forall z \in \mb{R}.
\]
\end{enumerate}
\end{theorem}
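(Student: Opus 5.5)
The plan is to separate variables in $s$, using the boundary conditions in (\ref{ODDRBS}) and (\ref{EVENRBS}) to make the one-dimensional operator $\wt{L} = \partial_{ss} + 2\cosh^{-2}(s)$ self-adjoint on $[-\ell,\ell]$ with $\tanh$ and $\cosh^{-1}$ as exact eigenfunctions. Then $\wt{\mc{L}} = \partial_{zz} + \wt{L}$ becomes a family of decoupled ODEs in $z$, which we solve mode by mode.

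\textbf{The one-dimensional problem.} Split any function into odd and even parts; these are preserved by $\wt{\mc{L}}$ because the potential is even.
\begin{itemize}
\item On odd functions on $[-\ell,\ell]$, impose the Robin condition $v_{,s}(\ell)\tanh(\ell) - v(\ell)\tanh'(\ell) = 0$.
\item On even functions, impose the analogous condition with $\cosh^{-1}$.
\end{itemize}
Each condition is a real Robin condition $v_{,s}(\ell) = \kappa v(\ell)$, with $\kappa = \tanh'(\ell)/\tanh(\ell)$ or $\kappa = (\cosh^{-1})'(\ell)/\cosh^{-1}(\ell)$. Hence $\wt{L}$ is self-adjoint in $L^2([-\ell,\ell])$ on each parity class and has a discrete orthonormal eigenbasis $\{\phi_k\}$.

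The direct computations $\tanh'' = -2\cosh^{-2}\tanh$ and $(\cosh^{-1})'' = \cosh^{-1} - 2\cosh^{-3}$ give $\wt{L}\tanh = 0$ and $\wt{L}\cosh^{-1} = \cosh^{-1}$. By construction both functions satisfy their boundary conditions, so they are eigenfunctions.

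Next, I would show they are the top eigenfunctions in their parity classes.
\begin{itemize}
\item For even functions, $\cosh^{-1}$ is positive, so it is the ground state of $-\wt{L}$ (Sturm--Liouville).
\item For odd functions, $\tanh$ has only the forced zero at $s = 0$, so it is the first odd eigenfunction.
\end{itemize}
It follows that on the $L^2$-orthogonal complement $V$ of $\mathrm{span}\{\tanh, \cosh^{-1}\}$ we have $-\wt{L} \geq \gamma(\ell) > 0$. Comparing with the nodal count of the next eigenfunctions gives a quantitative lower bound, which I expect to be of order $\ell^{-2}$.

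\textbf{Solving in $z$.} The hypothesis (\ref{StrongOrthogonality}) says that $E(\cdot,z) \in V$ for every $z$. Write $E = \sum_k e_k(z)\phi_k(s)$, where the sum runs over modes with eigenvalues $-\mu_k \leq -\gamma$. Each coefficient $e_k$ is supported in $[-2\pi,2\pi]$. Set
\[
u = \sum_k a_k(z)\phi_k(s), \qquad a_k(z) = -\frac{1}{2\sqrt{\mu_k}}\int_{\mb{R}} e^{-\sqrt{\mu_k}|z-z'|}\, e_k(z')\, dz',
\]
which is the unique bounded solution of $a_k'' - \mu_k a_k = e_k$. Then:
\begin{itemize}
\item $u$ is in $L^2$ in $s$, uniformly in $z$.
\item $u$ solves $\wt{\mc{L}} u = E$ weakly.
\item Each $\phi_k$ satisfies (\ref{ODDRBS}) or (\ref{EVENRBS}), so $u$ does too.
\item $u(\cdot,z) \in V$, so $u$ satisfies (\ref{StrongOrthogonality}).
\end{itemize}
Interior Schauder estimates, together with boundary Schauder estimates for the oblique (Robin) boundary condition, upgrade $u$ to a locally $C^{2,\alpha}$ function.

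\textbf{Uniqueness and the main obstacle.} For uniqueness, take the difference $w$ of two bounded solutions with the stated properties. Its modes satisfy $a_k'' = \mu_k a_k$ with $a_k$ bounded and $\mu_k > 0$, which forces $a_k \equiv 0$. The modes along $\tanh$ and $\cosh^{-1}$ vanish by the orthogonality requirement. The step I expect to be hardest is making the spectral gap $\gamma(\ell)$ quantitative and uniform enough to turn the mode-by-mode $L^2$ bounds into pointwise decay estimates in $z$. I would attack it through the quadratic form $\int_{-\ell}^{\ell}\left(v_s^2 - 2\cosh^{-2}v^2\right)ds$ minus the Robin boundary terms, restricted to $V$. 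The key tool is the factorization $-\wt{L} = A^*A$ with $A = \partial_s - (\log \phi)'$, taking $\phi = \tanh$ on odd functions and $\phi = \cosh^{-1}$ on even ones (modulo the eigenvalue shift for $\cosh^{-1}$). The boundary condition is exactly $Av(\ell) = 0$, so on $V$ the problem reduces to a Hardy-type inequality on $[-\ell,\ell]$.
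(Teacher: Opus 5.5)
The paper gives no proof of this statement; it quotes it from \cite{K2}. As printed, the conclusion contains no estimate (neither $C(\xi)$ nor $\xi$ appears), so only existence and uniqueness are at stake. Your spectral route is the natural one. However, the sentence ``It follows that on $V$ we have $-\wt{L}\geq\gamma(\ell)>0$'' does not follow in the even sector.

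Positivity of $\cosh^{-1}$ makes it the even ground state of $-\wt{L}$ with eigenvalue $-1$. That only shows the next even eigenvalue exceeds $-1$; nothing you wrote excludes even eigenvalues in $(-1,0]$. The odd sector is fine, since its ground-state eigenvalue is already $0$. The gap matters: for a mode with $\mu_k\le 0$, the equation $a_k''-\mu_k a_k=e_k$ has in general no bounded solution, and the construction breaks.

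Your own factorization closes the gap exactly. With $A=\partial_s+\tanh(s)$ one checks $A^*A=-\wt{L}+1$ and $AA^*=-\partial_{ss}+1$, and the even boundary condition (\ref{EVENRBS}) is exactly $Av(\pm\ell)=0$. If $A^*Av=\nu v$ with $\nu\neq 0$, then $w=Av$ is an odd Dirichlet eigenfunction of $-\partial_{ss}+1$ on $[-\ell,\ell]$, and conversely $v=A^*w$. Hence the even spectrum of $-\wt{L}$ is exactly $\{-1\}\cup\{(m\pi/\ell)^2: m\geq 1\}$. In the odd sector, the partner of $\partial_s-1/(\sinh s\cosh s)$ is $-\partial_{ss}+2\sinh^{-2}(s)$ with Dirichlet conditions at $s=0$ and $s=\ell$, and its eigenvalues exceed $\pi^2/\ell^2$. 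So $\gamma(\ell)=\pi^2/\ell^2$. The quantitative step you expected to be hardest is therefore explicit, and it gives $L^2_s$-decay in $z$ on the scale $\ell$ (like $e^{-\pi|z|/\ell}$). That is stronger than the polynomial decay invoked in Section \ref{Sec:MainInvertibilityProof}.

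Two further points. First, taken literally the uniqueness claim is false. The function $e^{\sqrt{\mu_k}z}\phi_k(s)$ solves $\wt{\mc{L}}w=0$ and satisfies (\ref{StrongOrthogonality}) and both boundary conditions. Your restriction to bounded solutions is therefore a genuinely needed extra hypothesis, presumably supplied in \cite{K2} by the decay estimate, and you should state it as such rather than slip it into the uniqueness argument. Second, on regularity: the conditions (\ref{ODDRBS}) and (\ref{EVENRBS}) couple the lines $s=\ell$ and $s=-\ell$. Apply oblique-derivative Schauder theory separately to $u^{+}$ and $u^{-}$. Each satisfies $\wt{\mc{L}}u^{\pm}=E^{\pm}$ and a local Robin condition $\partial_\nu u^{\pm}=\kappa u^{\pm}$ on both lines, with the appropriate constant $\kappa$ from your reformulation. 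Note also that the boundary conditions for your eigenfunction series hold at first only in the natural (form-domain) sense. They become classical only after this regularity step.
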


The general idea of the proof is then as follows: Localize the source term $E$ to  horizontal strips of fixed height, find solutions to the Poisson problem for the localized source terms with decay using Theorem \ref{RBCSolutionsWeightedControl}, and truncate them slowly  over an interval which is large enough for the truncation to produce a definitely smaller error due to the decay of the solutions, and small enough so that solutions at vastly different scales don't interact. If this process results in a strictly smaller error term, it can be iterated to find an exact solution.   Condition (\ref{ZConvexityCondition}) implies  that where the weight function $\lambda$ is small, it remains so over  large domains (this is quantified in Lemmas \ref{ZGoodInterval} and \ref{ZBetterInterval}). This is important for proving  the weighted estimates of Theorem \ref{MainInvertibilityStatement}, since regions of small scale are buffeted by large distances from regions of large  scale.

\subsubsection{The case $\lambda = const$} We  discuss first the proof of Theorem  \ref{MainInvertibilityStatement} in the case that the scale function is constant in order to more carefully illustrate the main ideas of the proof described above.  The discussion will use some notation and terminology that is introduced in Section \ref{Sec:Preliminaries}, to which the reader may refer.   In this case, the domain $\Lambda$ for the linear problem is a  vertical strip $[-\ell, \ell] \times \mb{R}$ for $\ell \approx \log (\lambda)$  and the local $C^{0, \alpha}$ norm $\|E \|_{0, \alpha}$ of the source  $E$ is essentially a fixed constant $\beta \omega$.  Theorem \ref{MainInvertibilityStatement}  follows from Theorem \ref{RBCSolutionsWeightedControl} by localizing the source term to  the rectangles $R_j$ of fixed height using  the partition of unity $\psi_j$, truncating the solutions slowly over the  tall domains $R^*_j$ with heights $h^* \approx \ell^{\sigma}$ for a power $\sigma > 2$ using the cutoff functions $\varphi_j^*$,  and then summing to form a global approximate solution, and showing that the resulting error term  is smaller than the original. An iteration process then finishes the proof. More precisely, we set $E_j (s, z): = \psi_j(z) E(s, z)$. By Theorem \ref{RBCSolutionsWeightedControl} there are  $u_j: [- \ell, \ell] \times \mb{R} \rightarrow \mb{R}$ satisfying
 \[
 \wt{\mc{L}} u_j = E_j
 \]
 and the weighted estimate
 \[
 \left\| u_j \right\|_{2, \alpha} \leq  \ell^{5/2} \beta \omega \left(\frac{1}{1 + |z - z_j|}\right)^{\xi}.
 \]
 The truncated solutions $u_j^* = \psi^*_j u_j$ are compactly supported in the  rectangles $R^*_j$, and it holds
\begin{align*}
\wt{\mc{L}}u^*_j & = \psi^*_j E_j + 2(\psi_j^{*})' u'_j + (\psi_j^{*})^{''} u_j \\
& = E_j + O\left(\ell^{5/2} \beta\omega\frac{\left(h^*\right)^{-1}}{1 + |z - z_j|^{\xi}}\right)
\end{align*}
were above we have used that $E_j$ is supported on the set where $\psi^*_j \equiv 1$, and we have used $'$ to denote derivation in $z$. Summing the localized solutions gives a global approximate solution $u^*$ satisfying 
\[
u^* \approx \beta \ell^{5/2} \omega\left(h^*\right)^{1 -\xi}, \quad \wt{\mc{L}}u^*=  E + O\left(\beta \ell^{5/2}\omega \left(h^*\right)^{ - \xi} \right) = E + E^*
\]
where above we have used that for $|j - k| >  2 h^*$, $u^*_k \equiv 0$ on  $R_j$ and the estimate
\[
\sum_{|k - j| \leq 2 h^*}\frac{1}{|z - z_k|^{\xi}} \leq 2 \int_0^{2 h^*} \frac{1}{(1 + x)^{\xi}} \leq \frac{1}{1 - \xi} (1 + 2 h^*)^{1- \xi}
\]
Thus  we have $u^* \approx \beta  \ell^{5/2 + (1 - \xi )\sigma}\omega$ and  $E^* \approx \beta \ell^{5/2 - \xi \sigma } \omega < <  \delta \beta \omega$ provided $5/2 - \xi \sigma < 0$, and thus the new error term $E^*$ is prescriptably small relative to $E$ . Moreover, since the solutions $u_j$ inherit the strong orthogonality of the source term, so does  $E^*$ and we can iterate the process to obtain an exact solution.  
\subsubsection{Considerations  in the general case}
In the case of a non-constant scale function, the same general idea applies, however there are several additional complications that need to be considered.  Firstly, when $\lambda$ is not constant the domain $\Lambda$ is no longer strip, but rather the boundary is a slowly varying graph over the $z$ axis. This causes some minor problems in the application of Theorem \ref{RBCSolutionsWeightedControl}, where the domains of the solutions are  infinitely tall strips.  The problem is solved by finding solutions that are supported on rectangles $R^*_j$ that are tall enough for the truncation argument to work and wide enough to include the relevant portion of the domain $\Lambda$. This, in turn creates a minor complication due to the fact that the source terms are not compactly supported away from the boundary of  $\Lambda$ and cannot a-priori  be considered $C^{0, \alpha}$ functions on $R^*_j$. This is dealt with by making use of an extension operator that preserves the strong orthogonality needed for  Theorem \ref{RBCSolutionsWeightedControl}, and  we can thus assume that the source terms are supported away from the boundary of the extended domain $\check{\Lambda}$.

 Another complication in the general case is that, since the scale function varies, the truncation and iteration argument described above may fail if solutions supported on regions of vastly different scale interact. Thus, the truncation needs to happen sufficiently slowly so that the iteration produces a smaller error term, but rapidly enough so that only localized solutions at comparable scales interact.  Condition (\ref{ZConvexityCondition}) on the scale functions implies that regions of extremely small scale are buffeted by a large distance from other scales, so that there is enough space for the truncation  argument to produce a smaller error term $E^*$. Precisely, Lemma \ref{ZGoodInterval} shows that the scale function $\lambda$ at points in an interval of the order $\lambda^{- \epsilon_0}(z)$ about $z$ is uniformly comparable to $\lambda(z)$. Since our truncation argument needs rectangles with height roughly $h \approx \ell^{\sigma} < \lambda^{-\epsilon_0/2}$ assuming $\ol{\lambda}$ small, there is enough space to cut off slowly before different scales interact.  
 
 There is also a minor difficulty in the iteration phase of the argument related to the strong orthogonality conditions needed for  Theorem \ref{RBCSolutionsWeightedControl}. Namely, since the solutions  $u_j$ are supported on the thickened rectangles $\check{R}^*_j$, they inherit the strong orthogonality to $\tanh(s)$ and $\cosh^{-1}(s)$ along the lines segments $[-\ol{\ell}_j^*- 1, \ol{\ell}_j^* + 1] \times \{ z\}$. The solutions $u_j$ and thus the error term $E^*$ are not in general strongly orthogonal along the shorter lines segments $[-\ell (z), \ell(z)] \times \{ z\}$ foliating the domain $\Lambda$. Thus, we have to be able to prescribe the kernel density in the source terms by the addition of special functions, and  the densities of $\tanh(s)$ and $\cosh^{-1}(s)$  in  the error term $E^*$ need to be carefully estimated. This is particularly sensitive in the case of $\cosh^{-1}(s)$, due to the  exponential growth of  density prescribing function in this case. 
 
  \subsection{Organization of the article}
Section \ref{Sec:Preliminaries} records basic notation and terminology that are used throughout the article, as well as some basic lemmas. Section \ref{Sec:PrescribinKernelDensities} records in Propositions  \ref{Prop:TanhProjectors} and \ref{Prop:CoshProjectors} the  construction of the functions that are used to prescribed $\tanh(s)$ and $\cosh^{-1}(s)$ density.  In  Section \ref{Sec:MainInvertibilityProof} we construct the kernel density preserving extension operator in Lemma \ref{ExtensionLemma} and we record the proof of  Theorem \ref{MainInvertibilityStatement}, using the iteration argument described above. 

 \section{Preliminaries} \label{Sec:Preliminaries}
In this section we record basic notation and terminology, as well as some  basic Lemmas, that will be used throughout. 
 \subsection{Domains and subdomains}

\begin{enumerate}
\item $\Lambda$ will denote the domain of immersion $F$ where the analysis take place. It it is of the form
\[
\Lambda : = \{ (s, z): |s| \leq \ell(z), z \in \mb{R} \}
\]
for a function $\ell: \mb{R} \rightarrow \mb{R}$ given by  (\ref{Def:EllTau}). \\

\item $\check{\Lambda}$ is the \emph{extended domain}, given by
\[
\check{\Lambda} : = \{ (s, z): |s| \leq \ell(z) + 1, z \in \mb{R} \}
\]
It is needed in the truncation/iteration argument, since the inhomogeneous terms we consider in the linear problem must be extended to $\check{\Lambda}$ with compact support. \\

\item $\Lambda_j$  and $\check{\Lambda}_j$  denote the intersection of $\Lambda$ and $\check{\Lambda}$, respectively,  with the horizontal slab $\mb{R} \times I_j$, where $I_j : = [z_j - 2 \pi, z_j+ 2\pi]$  and $z_j = 2 \pi _j$.   \\

\item $R_j$ amd $\check{R}_j$ denote the smallest rectangle containing $\Lambda_j$ and $\check{\Lambda}_j$, respectively. Setting $\ol{\ell}_j : = \sup_{z \in I_j} \ell(z)$, they  are given by
\[
R_j = [-\ol{\ell}_j, \ol{\ell}_j] \times I_j, \quad  \check{R}_j = [-\ol{\ell}_j - 1, \ol{\ell}_j + 1] \times I_j
\]
The domains $R_j$ are used in the construction of the $\tanh(s)$ density prescribing functions in Section  \ref{Sec:PrescribinKernelDensities} and the domains $\check{R}_j$ are used in the truncation/iteration argument in the proof of Proposition \ref{MainInvertibilityStatement}.\\
\end{enumerate}

The following domains are defined relative to a parameter $\sigma$. We will throughout the article assume that $\sigma$ belongs to an interval $[2, \ol{\sigma}]$ for a constant $\ol{\sigma} > 2$ to be determined. \\

\begin{enumerate}
\item $\Lambda^*_j$ and $R^*_j$:  $\Lambda_j^{*}$ denotes the intersection of $\Lambda$ with the slab $\mb{R} \times I_j^*$,  where
\[
I_j^* : = [z_j- \pi h^*_j, z_j +  \pi h_j^*],
\]
and  where $h^*_j = \left(\ell_j\right)^{\sigma}$ and $\ell_j : = \ell(z_j)$.  $R^*_j$ denotes the smallest rectangle containing $\Lambda^*_j$. With 
 \[
 \ol{\ell}^*_j :  = \sup_{z \in I^*_j} \ell(z)
 \]
  we have:
\[
R^*_j = [- \ol{\ell}^*_j, \ol{\ell}^*_j] \times I^*_j \\
\] 
\item  \emph{$\check{\Lambda}^*_j$ and $\check{R}^*_j$}: These are defined in analogy with $\Lambda^*_j$ and $R^*_j$. Thus, $\check{\Lambda}_j^{*}$ denotes the intersection of $\check{\Lambda}$ with the slab $\mb{R} \times I_j^*$, and 
\[
\check{R}^*_j = [- \ol{\ell}^*_j-1, \ol{\ell}^*_j + 1] \times I^*_j
\] 
These are used in the truncation/iteration argument in the proof of Proposition \ref{MainInvertibilityStatement}.  \\

\item \emph{The sequence of points $z_j^{(\sigma)}$}:  We wish to construct  domains $\Lambda^{(\sigma)}_j$ of variable height that partition $\Lambda$, where the height is a  function of $\lambda$. We first  construct a sequence $z_j^{(\sigma)}$ inductively as follows: Setting $z^{(\sigma)}_0 = 0$, we set $z^{(\sigma)}_{j + 1} = z^{(\sigma)}_j + 2\pi  h^{(\sigma)}_j$, where $h^{(\sigma)}_j =  \left(\ell^{(\sigma)}_j \right)^{\sigma}$ and $\ell^{(\sigma)}_{j} : = \ell\left(z^{(\sigma)}_j\right)$ for $j$ positive and for $j$ negative we similarly put $z^{(\sigma)}_{j} = z^{(\sigma)}_{j + 1} + 2 \pi h^{(\sigma)}_{j + 1}$. \\

\item $\Lambda^{(\sigma)}_j$ and $R^{(\sigma)}_j$: $\Lambda^{(\sigma)}_j$ is  intersection of $\Lambda$ with the slab $\mb{R} \times I^{(\sigma)}_j$, where for $ j \geq 0$
\[
I^{(\sigma)}_{j} =  \left[z^{(\sigma)}_j -  \pi h^{(\sigma)}_{j - 1}, z^{(\sigma)}_j + \pi  h^{(\sigma)}_j\right] .
\]
and for $j < 0$ the definition is similar. It is easy to see that these slabs partition $\mb{R}^2$ and thus the domains $\Lambda^{(\sigma)}_j$ partition $\Lambda$. These are used in the construction of the $\cosh^{-1}(s)$-density prescribing functions where, relative to the case of $\tanh(s)$, greater care needs to be taken due to the exponential growth of the required functions. Similarly we put
\[
R^{(\sigma)}_j : = \left[-\ol{\ell}_j^{(\sigma)}, \ol{\ell}_{j}^{(\sigma)}\right] \times I^{(\sigma)}_{j} 
\]
where $\ol{\ell}^{(\sigma)}_j = \sup_{z \in I^{(\sigma)}_j} \ell(z)$. Thus, $R^{(\sigma)}_j$ is the smallest rectangle containing $\Lambda^{(\sigma)}_j$.
\end{enumerate} 

\subsection{Partitions of unity and bump functions} \label{POFUABF}
\begin{enumerate}
\item $\psi_j$: We fix throughout this article a smooth partition of unity $\{\psi_j\}$ of $\mb{R}$ such that $\psi_{j + 1}(z) = \psi_j(z - 2\pi)$ and such that $\psi_0(z)$ is even in $z$ and supported on the interval $[-3 \pi/2, 3 \pi/2]$. They are used to localize the source terms in the linear problem so that Theorem \ref{RBCSolutionsWeightedControl} can be applied in the iteration/truncation argument. For general $j$ the function $\psi_j$ is supported on the interval $I''_j : = [-3 \pi/2 + z_j, 3 \pi/2 + z_j]$.\\

\item $\varphi_j$: We fix  a family of bump functions $\varphi_j = \varphi_j(z)$ on $\mb{R}$ with $\varphi_{j + 1}(z) = \varphi_j(z - 2 \pi)$ and such that  $\varphi_0(z)$ is even in $z$ and supported on the interval $[-2/3 \pi, 2/3 \pi]$. These can be thought of as having been obtained from the functions $\psi_j$ by shrinking their supports slightly, so
\[
\varphi_0(z) = \psi_0\left(\frac{9}{4}z\right)
\]
The difference between the $\psi_j's$ and $\varphi_j's$ is that the supports of $\varphi_j's$ do not overlap and they are thus not a partition of unity  of $\mb{R}$. They are used in the  construction of the functions prescribing  $\tanh(s)$ density in Section \ref{Sec:PrescribinKernelDensities}.  For general $j$ the function $\varphi_j$ is supported on the interval $I'_j : = [-3 \pi/2 + z_j, 3 \pi/2 + z_j]$. Observe that we have the containments $I'_j \subset I_j \subset I''_j$. \\

\item $\varphi_j^{(\sigma)}$: These are similar to the $\varphi_j's$ defined above, however they are supported over the intervals $I_j^{(\sigma)}$ and their derivative is inversely  proportional to the length of the domain. Precisely, we take
\[
\varphi^{(\sigma)}_j = \varphi_0\left( \frac{z - z_j}{h^{(\sigma)}_j}\right).
\]
They are supported on the interval $I^{(\sigma)'}_j : = \left[-\frac{2 \pi}{3}  h^{(\sigma)}_{j}+ z^{(\sigma)}_j, \frac{2 \pi}{3}  h^{(\sigma)}_{j}+ z^{(\sigma)}_j \right]$ and the $k^{th}$ derivative satisfies the estimate:
\begin{align}\label{VarphiSigmaEst}
\left(\varphi_j^{(\sigma)}\right)^{(k)} \leq^C \left(h^{(\sigma)}_j\right)^{- k} = \left(\ell^{(\sigma)}_{j}\right)^{-k \sigma}.
\end{align}
These functions are used in  the construction of the $\cosh^{-1}(s)$ density prescribing functions.  \\

\item  $\varphi_j^*$: These are cutoff functions with supports in the domains $I_j^*$. So
\[
\varphi^*_j (z) = \left( \frac{z - z_j}{h_j^*}\right)
\]
 These are used in the Proof of Theorem \ref{MainInvertibilityStatement} during the   truncation/iteration argument. The solutions $v_j$ to the localized linear problem decay away from the support of the source term and are cutoff off over the tall domains $R_j^*$ using the functions $\psi^*_j$.
\end{enumerate}
\subsection{Properties of scale functions}

The convexity condition (\ref{ZConvexityCondition}) ensures uniform comparability of scales on large domains, which is used  in several places and which we record here for easy reference.
 
 \begin{lemma}\label{ZGoodInterval}
Given $w$ and $z$ in $\mb{R}$ with   
\begin{align} \notag
|z - w| \leq \frac{1}{2 C_0} \left(\frac{2}{3} \right)^{1 + \epsilon_0}  \lambda^{- \epsilon_0}(z)
\end{align}
it holds that 
\begin{align}
\left|\frac{\lambda(w)}{\lambda(z)} - 1 \right| \leq 1/2.
\end{align}
\end{lemma}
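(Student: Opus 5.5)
The plan is a continuity (bootstrap) argument applied directly to the differential inequality (\ref{ZConvexityCondition}). The constant $\frac{1}{2C_0}\left(\frac{2}{3}\right)^{1+\epsilon_0}$ is chosen to match the worst-case value of $\lambda$ allowed by the conclusion, namely $\lambda \leq \frac{3}{2}\lambda(z)$.

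Fix $z$ and set $\delta := \frac{1}{2 C_0}\left(\frac{2}{3}\right)^{1+\epsilon_0}\lambda^{-\epsilon_0}(z)$. By symmetry it suffices to treat $w \geq z$. Define
\[
w^* := \sup\left\{ w \in [z, z+\delta] : \lambda(t) \leq \tfrac{3}{2}\lambda(z) \ \text{for all } t \in [z, w]\right\}.
\]
Since $\lambda$ is continuous (it is differentiable, as (\ref{ZConvexityCondition}) presupposes) and $\lambda(z) < \frac{3}{2}\lambda(z)$, we have $w^* > z$.

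On $[z, w^*]$ the bound $\lambda \leq \frac{3}{2}\lambda(z)$ turns (\ref{ZConvexityCondition}) into
\[
|\lambda'(t)| \leq C_0 \left(\tfrac{3}{2}\right)^{1+\epsilon_0} \lambda(z)^{1+\epsilon_0}.
\]
Integrating from $z$ to any $w \in [z, w^*]$ gives
\[
|\lambda(w) - \lambda(z)| \leq C_0 \left(\tfrac{3}{2}\right)^{1+\epsilon_0}\lambda(z)^{1+\epsilon_0}\,|w-z| \leq C_0 \left(\tfrac{3}{2}\right)^{1+\epsilon_0}\lambda(z)^{1+\epsilon_0}\,\delta = \tfrac{1}{2}\lambda(z).
\]
This is exactly $\left|\frac{\lambda(w)}{\lambda(z)} - 1\right| \leq \frac{1}{2}$ on $[z, w^*]$.

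The only delicate point is closing the bootstrap, i.e. showing $w^* = z + \delta$. The estimate above gives only $\lambda \leq \frac{3}{2}\lambda(z)$ on $[z,w^*]$, with possible equality at $w^*$. So an open-closed continuity argument needs either slack or a direct contradiction. I would run it by contradiction. Suppose $w^* < z+\delta$. Then by continuity and maximality of $w^*$, $\lambda(w^*) = \frac{3}{2}\lambda(z)$. The integration is still valid on $[z,w^*]$, and there $|w^* - z| < \delta$ strictly, so
\[
\lambda(w^*) - \lambda(z) < \tfrac{1}{2}\lambda(z).
\]
This contradicts $\lambda(w^*) = \frac{3}{2}\lambda(z)$. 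Hence $w^* = z+\delta$, and the estimate holds on the whole interval $[z, z+\delta]$.

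The lower bound $\lambda(w) \geq \frac{1}{2}\lambda(z)$ needs no bootstrap, since it follows from the same inequality $|\lambda(w)-\lambda(z)| \leq \frac{1}{2}\lambda(z)$. Positivity of $\lambda$ is given and is not needed beyond this. The case $w < z$ is identical, integrating over $[w, z]$.
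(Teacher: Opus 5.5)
Your proof is correct and is essentially the paper's argument: a first-exit (``closest point'') contradiction, in which the bound $\lambda \leq \frac{3}{2}\lambda(z)$ up to the exit point is fed into (\ref{ZConvexityCondition}) and integrated. Your use of the strict inequality $|w^* - z| < \delta$ is slightly more careful than the paper: its closing inequality $\frac{1}{2} \leq C_0 \delta (3/2)^{1+\epsilon_0}$ is an equality, not a contradiction, when $\delta$ is exactly the constant in the statement.
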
 
\begin{proof}
For $\delta > 0$ to be determined, suppose there is a point  $w$ with$|z - w| \leq  \delta \lambda^{- \epsilon_0}(z)$ and  $\left|\frac{\lambda(w)}{\lambda(z)} - 1 \right| =1/2$. We pick the point closest to $z$ such that these conditions are satisfied.
\begin{align*}
\left|\lambda(z) - \lambda(w)\right| & = \left| \int_{w}^z \lambda' (z') dz'\right| \\
& \leq C_0\int_{w}^z \lambda^{1 + \epsilon_0}(z') dz' \\
&  \leq C_0 \lambda^{1 + \epsilon_0}(z) \int_{w}^z \left(\frac{\lambda(z')}{\lambda(z)}\right)^{1 + \epsilon_0} dz' \\
& \leq C_0 \lambda^{1+ \epsilon_0}(z)\left(\frac{3}{2}\right)^{1 + \epsilon_0}|z - w| \\
& \leq C_0\delta \lambda(z)\left(\frac{3}{2}\right)^{1 + \epsilon_0} 
\end{align*}
Dividing by $\lambda(z)$ gives
\[
\frac{1}{2} = \left|\frac{\lambda(w)}{\lambda(z)} - 1\right| \leq C_0  \delta \left(\frac{3}{2}\right)^{1 + \epsilon_0}.
\]
Taking $\delta$ small gives a contradiction. 
\end{proof}

As a corollary, we get improved  control on $\lambda$ at slightly smaller scales
 
 \begin{lemma} \label{ZBetterInterval}
 Assume that $|z - w| \leq A \lambda^{- \eta \epsilon_0}$ for constant $A  > 0$ and $\eta \in (0, 1)$. Then for 
 \[
 \sup \lambda \leq \left(\frac{1}{2 C_0 A} \left(\frac{2}{3} \right)^{1 + \epsilon_0}   \right)^{\frac{1}{(1 - \eta) \epsilon_0}}
 \]
 it holds that 
 \begin{align*}
 \left|\frac{\lambda(w)}{\lambda(z)} - 1 \right| \leq C A \lambda^{(1- \eta)\epsilon_0}(z) 
 \end{align*}
 \end{lemma}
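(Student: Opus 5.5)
The plan is to deduce Lemma \ref{ZBetterInterval} from Lemma \ref{ZGoodInterval} and the integral estimate already used in its proof. First I check that the hypothesis puts $w$ inside the interval where Lemma \ref{ZGoodInterval} applies. Write $\delta_0 := \frac{1}{2C_0}\left(\frac{2}{3}\right)^{1+\epsilon_0}$. Since $\lambda(z) \le \sup \lambda$, the smallness assumption gives $\lambda(z)^{(1-\eta)\epsilon_0} \le \delta_0/A$. Therefore
\[
|z-w| \le A\lambda^{-\eta\epsilon_0}(z) = A\lambda^{(1-\eta)\epsilon_0}(z)\,\lambda^{-\epsilon_0}(z) \le \delta_0 \lambda^{-\epsilon_0}(z).
\]
The same bound holds for every $z'$ between $z$ and $w$, so Lemma \ref{ZGoodInterval} gives $\lambda(z') \le \frac{3}{2}\lambda(z)$ on the whole segment from $z$ to $w$.

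Next I redo the integral estimate with this a priori comparability. Using (\ref{ZConvexityCondition}),
\[
|\lambda(w)-\lambda(z)| \le \int_{[z,w]} C_0 \lambda^{1+\epsilon_0}(z')\,dz' \le C_0\left(\tfrac{3}{2}\right)^{1+\epsilon_0}\lambda^{1+\epsilon_0}(z)\,|z-w|.
\]
Inserting $|z-w| \le A\lambda^{-\eta\epsilon_0}(z)$ gives
\[
|\lambda(w)-\lambda(z)| \le C_0\left(\tfrac{3}{2}\right)^{1+\epsilon_0} A\,\lambda^{1+(1-\eta)\epsilon_0}(z).
\]
Dividing by $\lambda(z)$ yields the claim with $C = C_0(3/2)^{1+\epsilon_0}$.

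There is no serious obstacle. The one point to watch is that the hypothesis $|z-w| \le A\lambda^{-\eta\epsilon_0}$ is read with $\lambda$ evaluated at $z$. If it were instead evaluated at $w$, I would first apply Lemma \ref{ZGoodInterval} symmetrically (with the roles of $z$ and $w$ swapped) to see that $\lambda(w)$ and $\lambda(z)$ are comparable up to a factor of $2$. That would only change the constant $C$.
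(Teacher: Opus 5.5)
Your proposal is correct and follows the paper's proof: you use the smallness of $\sup\lambda$ to turn $|z-w|\le A\lambda^{-\eta\epsilon_0}$ into the hypothesis of Lemma \ref{ZGoodInterval}, integrate (\ref{ZConvexityCondition}) with the resulting factor $(3/2)^{1+\epsilon_0}$, and divide by $\lambda(z)$, which gives the same constant $C = C_0(3/2)^{1+\epsilon_0}$. You also state explicitly that Lemma \ref{ZGoodInterval} applies at every intermediate point of the segment, which the paper's proof only uses implicitly.
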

 \begin{proof}
Taking $\sup \lambda$ as  in the hypothesis gives
\begin{align*}
|z - w| &  \leq A \lambda^{- \eta \epsilon_0} \\
&  = A \lambda^{ (1 - \eta) \epsilon_0} \lambda^{- \epsilon_0} \\
& \leq A \frac{1}{2 C_0 A} \left(\frac{2}{3} \right)^{1 + \epsilon_0}  \lambda^{- \epsilon_0} \\
& =  \frac{1}{2 C_0 } \left(\frac{2}{3} \right)^{1 + \epsilon_0}  \lambda^{- \epsilon_0}.
\end{align*}
 By Lemma \ref{ZGoodInterval} we then have
\begin{align*}
\left|\lambda(z) - \lambda(w) \right| & = \left| \int_{w}^z \lambda'(z')dz' \right| \\
& \leq C_0 \int_{w}^z \lambda^{1 + \epsilon_0}(z')dz' \\
& \leq C_0 \left(\frac{3}{2}\right)^{1 + \epsilon_0} \lambda^{1 + \epsilon_0}(z) |z - w| \\
& \leq C_0\left(\frac{3}{2}\right)^{1 + \epsilon_0} A \lambda^{1 + (1- \eta)\epsilon_0}(z) .
\end{align*}
Dividing by $\lambda(z)$ gives the claim.
 \end{proof}
 
 By Lemma \ref{ZGoodInterval} and Lemma \ref{ZBetterInterval} we get quantitative control  on the variation of $\lambda$ over intervals of the order $\lambda^{- \eta \epsilon_0 }$, provided $\ol{\lambda}$ is chosen small relative to the constants $C_0$, $\epsilon_0$,   and $A$. Throughout the article, we will assume $\ol{\lambda}$ is chosen small relative to these constraints as needed. 
 
 As a consequence of  Lemma \ref{ZBetterInterval}    we can characterize when the domains $R^*_j$ and $R_k$ intersect.
 
 \begin{lemma}\label{Prop:EllBounds}
 Given  $\delta > 0$ and $\eta \in (0, 1)$  there is $\ol{\lambda} = \ol{\lambda} ( \delta, \eta) >0$ such that: Given $\tau > 0$ such that $\tau  < \frac{\eta}{2\ol{\sigma}}$ it holds that 
 \[
 \ell_{\tau}^\sigma \lambda^{\eta \epsilon_0} < \delta
 \]
 \end{lemma}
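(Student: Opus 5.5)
Note first that the definition (\ref{Def:EllTau}) must be read with $\lambda^{-1}$ in the $\tau=0$ case (so $\ell_0=\log(r_0\lambda^{-1})$, the $\tau\to 0$ limit of $\ell_\tau$); the key fact is $\ell_\tau \to +\infty$ as $\lambda\to 0$ at a controlled polynomial rate in $\lambda^{-1}$. The plan is to bound $\ell_\tau$ by an explicit power of $\lambda^{-1}$, raise it to the power $\sigma\le\ol{\sigma}$, and check that the resulting exponent of $\lambda$ in $\ell_\tau^\sigma\lambda^{\eta\epsilon_0}$ is positive. Smallness of $\ol{\lambda}$ then gives the bound $\delta$.

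\textbf{Step 1: a uniform upper bound for $\ell_\tau$.} For $\tau>0$, since $(r_0\lambda^{-\tau\epsilon_0})-1\le r_0\lambda^{-\tau\epsilon_0}$, we have
\[
\ell_\tau \le \frac{r_0}{\tau\epsilon_0}\,\lambda^{-\tau\epsilon_0}.
\]
This bound degenerates as $\tau\to 0$. To get something uniform in small $\tau$, use instead the elementary inequality $\frac{x^{a}-1}{a}\le \frac{x^{b}}{b}$ for $x\ge 1$ and $0<a\le b$. It follows from convexity of $a\mapsto x^a$: $(x^a-1)/a$ is nondecreasing in $a$, and $x^b-1\le x^b$. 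Writing $r_0\lambda^{-\tau\epsilon_0} = \exp(\tau\epsilon_0 L)$ with $L=\log(r_0^{1/(\tau\epsilon_0)}\lambda^{-1})$ is messy, so I would simply pick an exponent $\mu$ with $\tau<\mu<\eta/(2\ol{\sigma})$, for instance $\mu=\eta/(2\ol{\sigma})$. This gives
\[
\ell_\tau \le C(r_0,\epsilon_0,\eta,\ol{\sigma})\,\lambda^{-\mu\epsilon_0},
\]
uniformly in $\tau\in[0,\eta/(2\ol{\sigma}))$. The case $\tau=0$ is the logarithm, which is dominated by any power.

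\textbf{Step 2: raise to the power $\sigma$.} Since $\sigma\in[2,\ol{\sigma}]$, we get
\[
\ell_\tau^\sigma\lambda^{\eta\epsilon_0}\le C^{\ol{\sigma}}\lambda^{\eta\epsilon_0-\sigma\mu\epsilon_0}\le C^{\ol{\sigma}}\lambda^{\eta\epsilon_0/2}.
\]
This uses $\sigma\mu\le \eta/2$ and assumes $C\ge1$ and $\lambda\le 1$.

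\textbf{Step 3: choose $\ol{\lambda}$.} Take $\ol{\lambda}$ so small that $C^{\ol{\sigma}}\ol{\lambda}^{\eta\epsilon_0/2}<\delta$, together with $\ol\lambda\le \min(1, r_0)$ so that $\ell_\tau$ is positive and the monotonicity argument applies.

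\textbf{Main obstacle.} The only delicate point is uniformity as $\tau\to 0$, because the crude bound in Step 1 carries a factor $1/\tau$. This is why I route the estimate through the monotonicity in the exponent, which converts $\tau$ into the fixed exponent $\mu$ and gives a constant independent of $\tau$.
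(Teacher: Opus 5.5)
Your argument is correct once one point in Step 1 is pinned down, and it follows a somewhat different route from the paper's.

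\textbf{The point to fix in Step 1.} The inequality $\frac{x^a-1}{a}\le\frac{x^b}{b}$ requires the base $x$ to be independent of $\tau$. So it applies only if $\ell_\tau=\frac{X^{\tau\epsilon_0}-1}{\tau\epsilon_0}$ for a fixed $X$. Your first display uses the formula as literally printed. For that formula and $\lambda\le 1$ one has $\ell_\tau\ge\frac{r_0-1}{\tau\epsilon_0}$, which tends to $+\infty$ as $\tau\to0^+$ whenever $r_0>1$. In that reading no $\tau$-uniform bound exists, and the lemma itself fails. Calling the rewriting ``messy'' and then asserting uniformity skips exactly the step where the definition matters.

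You should state explicitly that $r_0$ sits inside the power. One option is $X=r_0\lambda^{-1}$, which matches your reading $\ell_0=\log(r_0\lambda^{-1})$. The other is $X=(r_0\lambda)^{-1}$, the convention the paper's proof silently uses. Then, with $a=\tau\epsilon_0$, $b=\mu\epsilon_0$ and the $\tau$-independent choice $\mu=\eta/(2\ol{\sigma})$, you get directly $\ell_\tau\le X^{\mu\epsilon_0}/(\mu\epsilon_0)$ for $X\ge1$. Steps 2 and 3 then go through as written.

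\textbf{Comparison with the paper.} The two proofs bound the difference quotient differently. The paper writes
\[
\frac{X^p-1}{p}=\frac{\log X}{p}\int_0^pX^{p'}\,dp'\le \log X\cdot X^{p},\qquad p=\tau\epsilon_0 .
\]
This keeps the actual $\tau$ in the exponent at the cost of a factor $\log^{\ol{\sigma}}X$. The paper then uses $\sigma\tau<\eta/2$ and the fact that $\log^{\ol{\sigma}}((r_0\lambda)^{-1})\,(r_0\lambda)^{\eta\epsilon_0/4}\to0$.

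Both bounds come from convexity of $p\mapsto X^p$. The paper bounds the secant slope on $[0,p]$ by the derivative at the right endpoint. You bound it by the secant slope over the larger, fixed interval $[0,\mu\epsilon_0]$.

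The paper's estimate is sharper for small $\tau$ and reduces to $\log X$ at $\tau=0$. Yours avoids the logarithm and gives an explicit threshold, $C^{\ol{\sigma}}\ol{\lambda}^{\eta\epsilon_0/2}<\delta$. Its constant $(\mu\epsilon_0)^{-\ol{\sigma}}$ degrades as $\eta\to0$, which is harmless since $\ol{\lambda}$ is allowed to depend on $\eta$.
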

 \begin{proof}
Assuming $r_0\lambda < 1$ we have
 \begin{align*}
 \left(r_0\lambda\right)^{- p} & = \left(\left(r_0\lambda\right)^{-1}\right)^{p} = 1 + \int_{0}^p \frac{d}{dp} \left. \left(\left(r_0\lambda^{-1}\right) \right)^{p} \right|_{p = p'} dp' \\
 & = 1 - \log\left(r_0 \lambda\right) \int_{0}^p \left(\left(r_0 \lambda \right)^{-1} \right)^{p'} dp'
 \end{align*}
 Thus we have 
 \begin{align*}
  \frac{\left(r_0\lambda\right)^{- p} - 1}{p} & = - \frac{ \log\left(r_0 \lambda\right) }{p}\int_{0}^p \left(\left(r_0 \lambda \right)^{-1} \right)^{p'} dp' \\
  & \leq - \frac{ \log\left(r_0 \lambda\right) }{p}\int_{0}^p \left(\left(r_0 \lambda \right)^{-1} \right)^{p} dp' \\
  & \leq - \log\left(r_0 \lambda\right) \left(\left(r_0 \lambda \right)^{-1} \right)^{p} 
\end{align*}
where in the second to last inequality above we have used that $\left(r_0\lambda\right)^{-1} > 1$ so that $\left(\left(r_0\lambda\right)^{-1}\right)^{p}$ is monotonically increasing in $p$. Thus, we have
\begin{align*}
\ell^{\sigma}_{\tau} \leq - \log^{\sigma}(r_0 \lambda)\left(r_0 \lambda \right)^{- \sigma \tau \epsilon_0} 
\end{align*}
and
\begin{align*}
\lambda^{\eta\epsilon_0}\ell^{\sigma}_{\tau}  & \leq - \log^{\sigma}(r_0 \lambda)\left(r_0 \lambda \right)^{(\eta- \sigma \tau) \epsilon_0}  \\
& \leq  - \log^{\ol{\sigma}}(r_0 \lambda)\left(r_0 \lambda \right)^{\frac{\eta}{4} \epsilon_0}.
\end{align*}
The claim then follows directly. 
 \end{proof}
 
 \begin{corollary}\label{Prop:DomainIntersection}
 Assume $ \tau  < \frac{1}{4\ol{\sigma}}$. Then there is  $\ol{\lambda}$  such that: Suppose the domain $R^*_{j}$ intersects $R_k$. Then $|j - k|  \leq 2 h^*_{k}$.
 \end{corollary}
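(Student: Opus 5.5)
If $R^*_j$ meets $R_k$, then the intervals $I^*_j$ and $I_k$ overlap. The plan is to turn this overlap into a comparison of the half-heights $h^*_j$ and $h^*_k$, using the slow variation of $\lambda$ (Lemmas \ref{ZGoodInterval} and \ref{ZBetterInterval}) together with the smallness provided by Lemma \ref{Prop:EllBounds}.

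\textbf{Step 1: a crude bound in terms of $h^*_j$.} Since $I^*_j = [z_j - \pi h^*_j, z_j + \pi h^*_j]$ and $I_k = [z_k - 2\pi, z_k + 2\pi]$ intersect, and $z_j = 2\pi j$, $z_k = 2\pi k$, we get
\[
2\pi|j-k| = |z_j - z_k| \leq \pi h^*_j + 2\pi .
\]
Hence $|j-k| \leq \tfrac12 h^*_j + 1$. It therefore suffices to show $h^*_j \leq 2h^*_k$, say; then, since $h^*_k = \ell_k^\sigma \geq 1$ once $\ol{\lambda}$ is small, we obtain $\tfrac12 h^*_j + 1 \leq h^*_k + 1 \leq 2h^*_k$.

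\textbf{Step 2: $z_k$ is close to $z_j$ on the scale of $\lambda(z_j)$.} We have $|z_j - z_k| \leq \pi h^*_j + 2\pi \leq 2\pi \ell_j^\sigma$. Apply Lemma \ref{Prop:EllBounds} with $\eta = 1/2$ (admissible because $\tau < 1/(4\ol{\sigma})$) and a small $\delta$. This gives $\ell_j^\sigma \leq \delta \lambda(z_j)^{-\epsilon_0/2}$, so
\[
|z_j - z_k| \leq 2\pi\delta\, \lambda(z_j)^{-\epsilon_0/2}.
\]
Lemma \ref{ZBetterInterval} (with $A = 2\pi\delta$, $\eta = 1/2$, and $\ol{\lambda}$ small) then yields $|\lambda(z_k)/\lambda(z_j) - 1| \leq C\lambda(z_j)^{\epsilon_0/2}$. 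This is as small as we like; at minimum $\lambda(z_k) \in [\tfrac12 \lambda(z_j), \tfrac32 \lambda(z_j)]$.

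\textbf{Step 3: transfer the comparison to $\ell_\tau$.} For $\tau = 0$, $\ell = \log(r_0\lambda)$ in the paper's sign convention, and $|\ell|$ is large. The ratio bound shows $\ell_j$ and $\ell_k$ differ by at most $\log 2$, so $\ell_j/\ell_k \to 1$ as $\ol{\lambda} \to 0$. For $\tau > 0$, $\ell_\tau = ((r_0\lambda)^{-\tau\epsilon_0} - 1)/(\tau\epsilon_0)$. A ratio $\lambda(z_k)/\lambda(z_j) \in [1/2, 3/2]$ changes $(r_0\lambda)^{-\tau\epsilon_0}$ by a factor in $[2^{-\tau\epsilon_0}, 2^{\tau\epsilon_0}]$, which is bounded because $\tau\epsilon_0 \leq \ol{\tau}\epsilon_0$. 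Since $(r_0\lambda)^{-\tau\epsilon_0} \geq 1$, the subtraction of $1$ needs care. I would use the integral representation from the proof of Lemma \ref{Prop:EllBounds}, namely
\[
\ell_\tau = -\log(r_0\lambda)\,\frac{1}{p}\int_0^p (r_0\lambda)^{-p'}\,dp', \qquad p = \tau\epsilon_0 .
\]
This represents $\ell_\tau$ as $|\log(r_0\lambda)|$ times an average of terms each changing by at most a factor $2^{p}$. So $\ell_j/\ell_k \leq 2^{p}(1 + o(1))$, and $h^*_j/h^*_k \leq (2^{\ol{\tau}\epsilon_0}(1+o(1)))^{\ol{\sigma}} \leq 2$ for $\ol{\tau}$ and $\ol{\lambda}$ small.

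I expect Step 3 to be the main obstacle: obtaining the ratio bound on $\ell_\tau$ uniformly in $\tau \in [0, \ol{\tau}]$ without losing a constant too large for the final inequality $h^*_j \leq 2h^*_k$. If that constant cannot be made $\leq 2$, the fallback is the sharper Step 2 estimate, under which the ratio tends to $1$. Combining Steps 1–3 gives $|j-k| \leq 2h^*_k$.
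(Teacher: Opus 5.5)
Your proposal is correct and follows the paper's route. Both arguments bound $|z_j-z_k|$ by roughly $2\pi h^*_j$, use Lemma \ref{Prop:EllBounds} with $\eta=1/2$ and Lemma \ref{ZBetterInterval} to get $|\lambda(z_k)/\lambda(z_j)-1|\le C\lambda(z_j)^{\epsilon_0/2}$, and turn this into a comparison of $h^*_j$ and $h^*_k$. The one difference is in that last step: the paper integrates the derivative bound (\ref{EllDIFFIN}) for $\ell_\tau$ between $z_j$ and $z_k$, whereas you use the explicit integral formula for $\ell_\tau$ at the two endpoints.

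In Step 3 you should use your ``fallback'' sharp estimate rather than the crude ratio $\lambda(z_k)/\lambda(z_j)\in[\tfrac12,\tfrac32]$. The crude route needs $2^{\tau\epsilon_0\ol{\sigma}}<2$, but the hypothesis $\tau<1/(4\ol{\sigma})$ only gives $2^{\tau\epsilon_0\ol{\sigma}}<2^{\epsilon_0/4}$, which is not below $2$ when $\epsilon_0\ge 4$. With the sharp estimate, $|\log\lambda(z_k)-\log\lambda(z_j)|\to 0$, so $\ell_j/\ell_k\to 1$ uniformly in $\tau$ and no extra smallness of $\ol{\tau}$ is needed.
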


\begin{proof}
Assume that $\ol{\lambda}$ is as in the statement of Lemma \ref{Prop:EllBounds} with $\delta =1$. Observe that $R^*_{j}$ intersects $R_k$ if and only if $I^*_j$ intersects $I_k$. Suppose there is $w$ such that  belongs to both  $I^*_j$ and $I_k$. Then we have $|w - z_k| <  \pi$ and $|w- z_j| \leq \pi h^*_j$. Thus we have
\begin{align}\label{Rasdfsk}
\left|z_k - z_j \right| & \leq \pi + \pi  h^*_j \\ \notag
 & \leq 2\pi h^*_j \\ \notag
& \leq 2 \pi\left(\ell_{j}\right)^{\sigma} \\ \notag
& \leq 2 \pi \lambda^{- \epsilon_0/2}, \notag
\end{align}
where the last inequality above follows from Lemma \ref{Prop:EllBounds} with $\delta = 1$. We can apply  Lemma \ref{ZBetterInterval} with $\eta = 1/2$ to get
 \begin{align}\label{RatioControled}
 \left| \frac{\lambda(z_k)}{\lambda(z_j)} - 1\right|  & \leq^C \lambda^{\epsilon_0/2}(z_j)    \\ \notag
 \end{align}
by assuming $\ol{\lambda}$ smaller if necessary. Observe that $\ell = \ell_{\tau}$ satisfies the differential inequality
 \begin{align}\label{EllDIFFIN}
 \left|\ell'\right| \leq C_0 r_0^{- \tau \epsilon_0} \lambda^{(1 - \tau) \epsilon_0}.
 \end{align}
 We then have
 \begin{align*}
\left|\ell_{k} - \ell_j \right| &  = \left| \int_{z_j}^{z_k}  \ell' (z) dz\right|\\
& \leq C_0 r_0^{- \tau \epsilon_0} \int_{z_j}^{z_k}   \lambda^{(1 - \tau) \epsilon_0}(z) dz \\
& \leq C_0 r_0^{- \tau \epsilon_0} h^*_{j} \left(1 + \lambda_j^{\epsilon_0/2}\right)\lambda_j^{\epsilon_0/2}dz.
 \end{align*}
 Dividing by $\ell_j$ gives
 \begin{align*}
 \left| \frac{\ell_k}{\ell_j}   -1 \right|  & \leq  C_0 r_0^{- \tau \epsilon_0} \left(\ell_{j} \right)^{\sigma - 1}\left(1 + \lambda_j^{\epsilon_0/2}\right)\lambda_j^{\epsilon_0/2} \\
 & \leq C\left(\ell_{j} \right)^{\sigma - 1}\left(1 + \lambda_j^{\epsilon_0/2}\right)\lambda_j^{\epsilon_0/2}
 \end{align*}
 In particular, given $q > 0$ and  assuming $\ol{\lambda}$ is smaller if necessary we have
 \[
\left| \frac{\ell_k}{\ell_j}   -1 \right| \leq  q,
 \]
 and thus
 \[
 (1 - q)^{\sigma} \leq \frac{h^*_k}{h^*_j} \leq (1 + q)^{\sigma}
 \]
  Combing this with  (\ref{Rasdfsk}) we then have  the bound
\[
\left|z_k - z_j \right| \leq 2\pi (1 + q)^{\sigma} h^*_k
\]
 Choosing $q$ so that $q < \left(\frac{3}{2} \right)^{1/\ol{\sigma}} - 1$ gives
 \[
 \left| k - j\right| < \frac{3}{2} h_k^*
 \]
 which directly implies the claim. 
\end{proof}

We will also need

\begin{corollary}
There is $\ol{\lambda}$ sufficiently small, the interval $I^{(\sigma)'}_j : = \left[-\frac{2}{3} \pi h^{(\sigma)}_j + z_j^{(\sigma)}, \frac{2}{3} \pi h^{(\sigma)}_j + z_j^{(\sigma)}\right]$ is compactly contained in the interval $I^{(\sigma)}_j$. Thus, the supports of the functions $\varphi^{(\sigma)}_j$ do not intersect. 
\end{corollary}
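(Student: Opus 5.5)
The claim reduces to comparing the heights of neighbouring slabs. By definition,
\[
I^{(\sigma)}_j = \left[z^{(\sigma)}_j - \pi h^{(\sigma)}_{j-1},\, z^{(\sigma)}_j + \pi h^{(\sigma)}_j\right]
\]
for $j \geq 0$, with the symmetric definition for $j<0$. The right endpoint of $I^{(\sigma)'}_j$ is $z^{(\sigma)}_j + \tfrac{2\pi}{3} h^{(\sigma)}_j$. Since $\tfrac{2}{3} < 1$, it lies strictly inside $I^{(\sigma)}_j$ with margin $\tfrac{\pi}{3}h^{(\sigma)}_j$, so this side is immediate. The left endpoint is $z^{(\sigma)}_j - \tfrac{2\pi}{3}h^{(\sigma)}_j$, and we need
\[
\tfrac{2}{3} h^{(\sigma)}_j < h^{(\sigma)}_{j-1},
\]
that is,
\[
\left(\frac{\ell^{(\sigma)}_j}{\ell^{(\sigma)}_{j-1}}\right)^{\sigma} < \frac{3}{2}.
\]
So everything reduces to showing that $\ell$ at consecutive points $z^{(\sigma)}_{j-1}$ and $z^{(\sigma)}_j$ has ratio close to $1$, uniformly for $\sigma \in [2,\ol{\sigma}]$.

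\textbf{Ratio estimate.} I would follow the argument of Corollary \ref{Prop:DomainIntersection}. The distance is
\[
|z^{(\sigma)}_j - z^{(\sigma)}_{j-1}| = 2\pi h^{(\sigma)}_{j-1} = 2\pi (\ell^{(\sigma)}_{j-1})^{\sigma}.
\]
By Lemma \ref{Prop:EllBounds} with $\delta = 1$, $\eta = 1/2$ and $\tau < 1/(4\ol{\sigma})$, this is at most $2\pi\lambda^{-\epsilon_0/2}(z^{(\sigma)}_{j-1})$. Lemma \ref{ZBetterInterval} with $A = 2\pi$ and $\eta = 1/2$ then shows that $\lambda$ is comparable to $\lambda(z^{(\sigma)}_{j-1})$ on the whole interval between the two points, for $\ol{\lambda}$ small. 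Integrating the differential inequality (\ref{EllDIFFIN}) across this interval gives
\[
|\ell^{(\sigma)}_j - \ell^{(\sigma)}_{j-1}| \leq C (\ell^{(\sigma)}_{j-1})^{\sigma}\lambda^{(1-\tau)\epsilon_0}(z^{(\sigma)}_{j-1}).
\]
Dividing by $\ell^{(\sigma)}_{j-1}$, and applying Lemma \ref{Prop:EllBounds} once more to absorb the power of $\ell$ into a small power of $\lambda$, the relative difference is bounded by an arbitrarily small $q$ once $\ol{\lambda}$ is small. We also use that $\ell \geq 1$ when $\ol{\lambda}$ is small.

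\textbf{Conclusion.} Choose $q$ with $(1+q)^{\ol{\sigma}} < 3/2$. Then $h^{(\sigma)}_j / h^{(\sigma)}_{j-1} < 3/2$, which gives the containment on the left, with a positive margin. Hence $I^{(\sigma)'}_j$ is compactly contained in $I^{(\sigma)}_j$. Since the $I^{(\sigma)}_j$ partition $\mb{R}$, overlapping only at endpoints, the supports of the $\varphi^{(\sigma)}_j$, which lie in the $I^{(\sigma)'}_j$, are pairwise disjoint. The case $j<0$ is identical with the roles of the neighbours swapped, and $j = 0$ uses the neighbour $j=-1$ analogously.

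\textbf{Main obstacle.} The only delicate point is the uniformity of the ratio estimate. The step length $(\ell^{(\sigma)}_{j-1})^{\sigma}$ grows as $\lambda \to 0$, so the relative change of $\ell$ must be beaten by the factor $\lambda^{(1-\tau)\epsilon_0}$. This is exactly what Lemma \ref{Prop:EllBounds} provides, under the smallness of $\tau$ and $\ol{\lambda}$.
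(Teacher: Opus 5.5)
Your proof is correct and follows essentially the same route as the paper. Both arguments reduce to showing that consecutive heights $h^{(\sigma)}_{j-1}, h^{(\sigma)}_j$ have ratio close to $1$, using the distance bound from Lemma \ref{Prop:EllBounds}, comparability of $\lambda$ from Lemma \ref{ZBetterInterval}, and integration of (\ref{EllDIFFIN}) as in Corollary \ref{Prop:DomainIntersection}. The only difference is cosmetic: the paper compares the right endpoint of $I^{(\sigma)'}_{j-1}$ directly with the left endpoint of $I^{(\sigma)'}_j$, whereas you verify the stated containment in $I^{(\sigma)}_j$ and then deduce disjointness from the tiling by the $I^{(\sigma)}_j$.
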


\begin{proof}
Assuming $\ol{\lambda}$ is as in Lemma \ref{Prop:EllBounds} with $\delta = 1$ we have
\begin{align*}
\left| z^{(\sigma)}_{j} - z_{j -1}^{(\sigma)}\right| &   = 2 \pi h^{(\sigma)}_{j - 1}  \\
& \leq 2 \pi\left( \lambda^{(\sigma)}_j\right)^{-\epsilon_0/2}.
\end{align*}
Thus as in the proof of Corollary \ref{Prop:DomainIntersection} we can choose $\ol{\lambda} = \ol{\lambda}( \ol{\sigma})$ sufficiently small so that
\begin{align*}
\frac{1}{2} \leq \frac{h^{(\sigma)}_j}{h^{(\sigma)}_{j - 1}} \leq  \frac{3}{2},
\end{align*}
which gives
\[
- \frac{4}{3} h^{(\sigma)}_{j - 1} \leq - \frac{8}{9} h^{(\sigma)}_{j} < -\frac{2}{3} h^{(\sigma)}_{j}
\]
This then gives
\begin{align*}
z^{(\sigma)}_{j -1} + \frac{2}{3} \pi h^{(\sigma)}_{j - 1} & = z^{(\sigma)}_{j} - 2 \pi h^{(\sigma)}_{j - 1} + \frac{2}{3} \pi h^{(\sigma)}_{j -1} \\ \notag
& = z^{(\sigma)}_{j} - \frac{4}{3} \pi h^{(\sigma)}_{j - 1} \\ \notag \\
& < z^{(\sigma)}_{j} - \frac{2}{3} \pi h^{(\sigma)}_{j }
\end{align*}
Thus the intervals $I'_{j - 1}$ and $I'_{j}$ do not intersect.
 \end{proof}

\begin{remark} \label{EllComparable}
In light of the derivative estimate (\ref{EllDIFFIN}) for $\ell_\tau$, the  ratios $\frac{\ell(z)}{\ell(w)}$ are uniformly controlled within the intervals $I_{j}$, $I_j^*$ and $I^{(\sigma)}_j$.   Additionally, by assumption the ratios $\frac{\omega(z)}{\omega(w)}$ are uniformly controlled over these intervals as well. 
\end{remark}

 \section{Prescribing kernel density} \label{Sec:PrescribinKernelDensities}
 This section records  the proofs of Propositions \ref{Prop:TanhProjectors} and  \ref{Prop:CoshProjectors}, which construct  functions that prescribe  $\cosh^{-1}(s)$ and $\tanh(s)$ densities along horizontal lines $ \{z = const \}$. The proofs are similar but are recorded separately as the exponential growth of the functions prescribing $\cosh^{-1}(s)$ requires a slightly more careful treatment.

\subsection{Prescribing $\tanh(s)$ density}

\begin{proposition} \label{Prop:TanhProjectors}
There is a constant $C > 0$ such that: Given $e: \mb{R} \rightarrow \mb{R}$ with $\| e\|_{0, \alpha} \leq \beta \omega$ for a positive constant $\beta$, there is a function $\Theta: \Lambda \rightarrow \mb{R}$ such that  
\[
\int_{-\ell(z)}^{\ell(z)} \left(\wt{\mc{L}}\Theta \right)\tanh(s) ds = e(z) \int_{-\ell(z)}^{\ell(z)} \tanh^2(s)  ds
\]
and satisfying the estimate
\[
\left\|\Theta \right\|_{2, \alpha} \leq^C \beta  \ell^{2}  \omega.
\]
\end{proposition}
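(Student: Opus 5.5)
The plan is to take $\Theta$ of separated form $\Theta(s,z)=g(z)\,w(s,z)$. Here $w(\cdot,z)$ is an explicit odd profile in $s$ of size $O(\ell^2)$, and $g$ solves a uniformly elliptic second order ODE in $z$ with bounded coefficients. The ODE is chosen so that the $\tanh(s)$-density of $\wt{\mc{L}}\Theta$ is exactly $e(z)\int_{-\ell}^{\ell}\tanh^2$.

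\emph{The profile.} Write $\wt{L}=\partial_{ss}+2\cosh^{-2}(s)$. By variation of parameters (the odd kernel element is $\tanh(s)$), there is a unique odd solution $w_0$ of $\wt{L}w_0=\tanh(s)$ with $w_{0,s}(0)=0$. Its growth is quadratic: $|w_0|\leq^C 1+s^2$, and similarly for its derivatives. For a parameter $c=c(z)$ I set $w=w_0+c\tanh(s)$. This still satisfies $\wt{L}w=\tanh$, since $\tanh$ is in the kernel of $\wt{L}$. Put
\[
A(z)=\int_{-\ell(z)}^{\ell(z)}\tanh^2(s)\,ds\approx 2\ell, \qquad B(z)=\int_{-\ell(z)}^{\ell(z)} w\tanh(s)\,ds .
\]
Since $\int w_0\tanh\approx \tfrac{2}{3}\ell^3$, I will choose $c=-\ell^2$. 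Then $B=-\kappa A$ with $\kappa(z)\approx c_1\ell^2$ for a definite $c_1>0$, and $|w|\leq^C\ell^2$ on $\Lambda$. Because $\ell$ and hence $c$ depend on $z$, I compute
\[
\wt{\mc{L}}(gw)=g\tanh+g''w+2g'w_{,z}+gw_{,zz}.
\]
The $z$-derivatives of $w$ come only through $c(z)=-\ell^2$, so by (\ref{EllDIFFIN}) they are bounded by $\ell\,\lambda^{(1-\tau)\epsilon_0}$ times $O(1)$ functions. This is negligible compared with $\lambda^{\epsilon_0/2}$ and hence also compared with any power of $\ell^{-1}$, by Lemma \ref{Prop:EllBounds}.

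\emph{The ODE in $z$.} Taking the $\tanh(s)$-density of $\wt{\mc{L}}(gw)$ gives
\[
gA+g''B+\mc{P}(g)=eA,
\]
where $\mc{P}(g)$ collects the terms involving $w_{,z}$ and $w_{,zz}$. Dividing by $A$ turns this into
\[
-\kappa g''+g+\mc{P}(g)/A=e.
\]
I will solve it by perturbation off the constant-coefficient model. The operator $-\kappa\partial_{zz}+1$ with $\kappa\approx\ell^2$ has Green's function $\frac{1}{2\sqrt{\kappa}}e^{-|z|/\sqrt{\kappa}}$, which has unit mass and decays on the scale $\ell$. By (\ref{WeightFunctionCondition}) and Remark \ref{EllComparable}, $\omega$ and $\ell$ are essentially constant over intervals of length $\ell^{\sigma}\gg\ell$. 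So I freeze $\kappa$ locally, use a partition of unity on intervals of length $\approx\ell^2$, and absorb both the commutators and $\mc{P}(g)/A$ as small errors. A contraction argument then gives $g$ with $|g|\leq^C\beta\omega$. Rescaling $z=\sqrt{\kappa}\,t$ and applying Schauder estimates gives $\|g''\|_{0,\alpha}\leq^C\kappa^{-1}\beta\omega$, and similarly for $g'$.

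\emph{Estimate.} With these bounds,
\[
\|\Theta\|_{2,\alpha}\leq^C \|g\|_{2,\alpha}\,\|w\|_{2,\alpha}\leq^C \beta\omega\,\ell^2,
\]
where I use $|w|,|w_{,s}|,|w_{,ss}|\leq^C\ell^2$ on $|s|\leq\ell$. The main obstacle is the choice of $c$ and the resulting sign and nondegeneracy of $\kappa$. With $c=0$ the coefficient $B/A$ is positive, the ODE becomes $g+\kappa g''=e$, and this is oscillatory with no decaying Green's function. The shift by a multiple of the kernel element $\tanh$ is what makes the $z$-problem coercive. The one point I would check carefully is that the smallness of $\ell'$ really makes the $\mc{P}(g)$ and frozen-coefficient errors perturbative at the scale $\sqrt{\kappa}\approx\ell$.
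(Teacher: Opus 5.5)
Your argument is correct in substance, but it takes a different route from the paper. The paper never solves an ODE on all of $\mathbb{R}$. It localizes $e$ with the unit-scale partition $\psi_j$. It then uses the correctors $\varphi_j f$ and $(z-z_j)\varphi_j f$, with $f=\frac12 s^2\tanh(s)-s$ (your $w_0$ is $f+\tanh$), to remove the zeroth and first $z$-moments of each local density. This lets the remainder be integrated through the degenerate equation $\ol{\Theta}_j''=\theta_j$ with compact support. It sets $\Theta_j=\ol{\Theta}_j\tanh(s)+\wh{\Theta}_j$ and iterates away the errors caused by the variation of $\ell$: the mismatch between $\Lambda_j$ and $R_j$, and the normalization ratio in $e'_j$.

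You instead couple the two odd building blocks globally through the multiple $c g$ of $\tanh(s)$. This turns the density condition into the single equation $-\kappa g''+2c'g'+(1+c'')g=e$. With $c\approx-\ell^2$ this equation is coercive, and your sign observation is exactly the point the paper's moment cancellation works around. Since $A(z)$ and $B(z)$ are computed with the true $\ell(z)$, the density identity holds exactly once $g$ is found.

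What the paper's route buys is that everything is unit-scale and compactly supported, and only the Lipschitz bound (\ref{EllDIFFIN}) on $\ell$ is used. What yours buys is that $g$ varies on the scale $\ell$. You get the same bound $\|\Theta\|_{2,\alpha}\le C\beta\ell^2\omega$, but also the sharper $\|\wt{\mc{L}}\Theta\|_{0,\alpha}\le C\beta\omega$; the paper only gets $\beta\ell^2\omega$, which is where $\ell^{9/2-\xi\sigma}$ enters the proof of Theorem \ref{MainInvertibilityStatement}. The price is a weighted estimate for a variable-coefficient ODE on $\mathbb{R}$. Your frozen-coefficient localization on intervals of length $\ell^2\ll\lambda^{-3\epsilon_0/4}$ handles this: $\omega$ and $\ell$ are comparable on such intervals, and the model Green's function is $O(e^{-c\ell})$ at distance $\ell^2$.

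Two corrections. First, $\int_{-\ell}^{\ell}w_0\tanh(s)\,ds\approx\frac13\ell^3$, not $\frac23\ell^3$, so $\kappa\approx\frac56\ell^2$; this is harmless.

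Second, and more substantive: taking $c=-\ell^2$ literally requires $\ell\in C^{2,\alpha}$ with small second derivatives, because $\Theta_{,zz}$ and $\mc{P}(g)$ involve $c''$. The hypotheses only control $\lambda'$, and (\ref{EllDIFFIN}) bounds only $\ell'$, so your claim that $w_{,zz}$ is negligible does not follow. The fix is to replace $c$ by $-\tilde\ell^{\,2}$, where $\tilde\ell$ is a unit-scale mollification of $\ell$. Then $|\tilde\ell-\ell|$, $|c'|$, $|c''|$ and $[c'']_{\alpha}$ are all bounded by $C\ell\sup_{|z'-z|\le 1}|\ell'|$, which is tiny by Lemma \ref{Prop:EllBounds}. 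The coefficient $\kappa$ itself only needs to be Lipschitz, which it is.
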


The proof of Proposition \ref{Prop:TanhProjectors} proceeds in two steps. Fixing a $\tanh(s)$ density function $e(z)$, we first construct  functions with the same density over the  subdomains $\Lambda''_j$ of fixed height. This reduces the problem to density functions that are orthogonal to $\tanh(s)$ over such domains.  Once this is done, we can directly integrate the remaining  density problem as an ODE in  $z$. The orthogonality to $\tanh(s)$ over the domains $\Lambda''_j$ gives  that the ODE is compactly supported in $z$ and implies uniform estimates for the solutions. The proof of Proposition \ref{Prop:CoshProjectors} in the ensuing section is essentially the same, however greater care needs to be taken in several of steps due to the exponential growth of the  functions  needed for arranging  weak orthogonality over the domains $\Lambda_j$.

\begin{lemma} \label{Prop:TProjectors}
The following statements hold:
\begin{enumerate}
\item \label{Prop:TProjectors1}$\wt{\mc{L}} f = \tanh(s)$, where $f = \frac{1}{2} s^2 \tanh (s) - s$. \\
\item\label{Prop:TProjectors2} $\int_{R_j} \wt{\mc{L}}(\varphi_j f) \tanh(s) = \int_{R_j}\varphi_j \tanh^2(s) ds$.  \\
\item \label{Prop:TProjectors3} $\int_{R_j} \wt{\mc{L}}( (z - z_j) \varphi_j f) (z - z_j) \tanh(s)   = \int_{R_j}\varphi_j  (z - z_j)^2 \tanh^2(s) ds$. 
\end{enumerate}
\end{lemma}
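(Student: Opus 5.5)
The plan is to verify (\ref{Prop:TProjectors1}) by direct computation, and then obtain (\ref{Prop:TProjectors2}) and (\ref{Prop:TProjectors3}) from it by splitting $\wt{\mc{L}} = \partial_{zz} + \wt{L}$, where $\wt{L} = \partial_{ss} + 2\cosh^{-2}(s)$. The key structural fact is that on the rectangle $R_j = [-\ol{\ell}_j, \ol{\ell}_j] \times I_j$ the $s$-range does not depend on $z$. Fubini then separates the $z$-integrals from the $s$-integrals.

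For (\ref{Prop:TProjectors1}), use $\tanh' = \cosh^{-2}$ and $\tanh'' = -2\cosh^{-2}(s)\tanh(s)$. Differentiating twice gives
\[
f_{ss} = \tanh(s) + 2s\cosh^{-2}(s) - s^2 \cosh^{-2}(s)\tanh(s),
\]
and
\[
2\cosh^{-2}(s) f = s^2\cosh^{-2}(s)\tanh(s) - 2s\cosh^{-2}(s).
\]
Adding these, every term except $\tanh(s)$ cancels. Since $f$ is independent of $z$, this gives $\wt{\mc{L}} f = \wt{L} f = \tanh(s)$.

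For (\ref{Prop:TProjectors2}), $\varphi_j$ depends only on $z$, so
\[
\wt{\mc{L}}(\varphi_j f) = \varphi_j'' f + \varphi_j \tanh(s).
\]
Multiplying by $\tanh(s)$ and integrating over $R_j$, the first term factors as
\[
\left(\int_{I_j}\varphi_j''\,dz\right)\left(\int_{-\ol{\ell}_j}^{\ol{\ell}_j} f\tanh(s)\,ds\right).
\]
The first factor vanishes because $\varphi_j$ is compactly supported in the interior of $I_j$ (recall $I'_j \subset I_j$), so $\varphi_j'$ vanishes at both endpoints. The remaining term is exactly $\int_{R_j}\varphi_j\tanh^2(s)$.

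For (\ref{Prop:TProjectors3}), write $g(z) = (z-z_j)\varphi_j(z)$, which is again compactly supported in $I_j$. Then
\[
\wt{\mc{L}}(gf) = g'' f + g\tanh(s).
\]
After multiplying by $(z-z_j)\tanh(s)$, the $g''$ term contributes the factor $\int_{I_j} g''(z)(z-z_j)\,dz$. Integrating by parts twice, with no boundary terms because $g$ has compact support, this equals $\int_{I_j} g \cdot \frac{d^2}{dz^2}(z-z_j)\,dz = 0$. What remains is $\int_{R_j}\varphi_j (z-z_j)^2\tanh^2(s)$.

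The only delicate point is making sure the boundary terms in $z$ vanish. This holds because the supports of the $\varphi_j$ lie strictly inside $I_j$. It also relies on the rectangular shape of $R_j$: on $\Lambda_j$ itself the $s$-limits vary with $z$, and the factorization above would fail. There is no real obstacle beyond this bookkeeping.
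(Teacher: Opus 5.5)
Your proposal is correct and matches the paper's argument: use part (1) to reduce $\wt{\mc{L}}$ of the product to $\varphi_j \tanh(s)$ (resp.\ $(z-z_j)\varphi_j\tanh(s)$) plus a $z''$-term times $f$, factor that extra term over the rectangle $R_j$ by Fubini, and kill its $z$-factor by integration by parts using that $\varphi_j$ is compactly supported inside $I_j$. The only difference is cosmetic. In (3) the paper integrates by parts once and then uses $\int ((z-z_j)\varphi_j)'\,dz = 0$, and it leaves (1) as an "easy computation," which you carried out correctly.
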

\begin{proof}
(\ref{Prop:TProjectors1}) is an easy computation. For (\ref{Prop:TProjectors3}), without loss of generality assume $j = 0$ so that $z_j = 0$.  Then we have
\begin{align*}
\int_{R_j} \wt{\mc{L}}(\varphi_j  zf) z \tanh (s)&=  \int_{R_j}\left( \varphi_j z \tanh(s)  + (\varphi_j z)'' f\right) z \tanh(s) \\
& =  \int_{R_j }\varphi_j z^2\tanh^2(s) +   \int_{R_j}  (\varphi_j z)'' f z \tanh(s). \\
\end{align*}
Then
\begin{align*}
 \int_{R_j}  (\varphi_j z)'' f z \tanh(s) & = \int_{- 2\pi}^{2 \pi}   (\varphi_j z)''  z dz \int_{-\ol{\ell}_j}^{\ol{\ell}_j} f(s) \tanh (s)ds \\
 & = - \int_{- 2\pi}^{2 \pi}   (\varphi_j z)'  dz \int_{-\ol{\ell}_j}^{\ol{\ell}_j} f(s) \tanh(s) ds \\
 & = 0.
\end{align*}
This gives the claim. The proof of  (\ref{Prop:TProjectors2}) is similar.

\end{proof}

\begin{definition}  \label{Def:TAJBJs}
With  $E(s, z) = e (z)\tanh(s)$ and $E_j : = \psi_j E = e_j \tanh(s)$, we set
\[
a_j : = \int_{\Lambda''_j} E_j(s, z)\tanh(s),  \quad b_{j} : =  \int_{\Lambda''_j} E_j(s, z) (z - z_j) \tanh(s).
\]
where $\Lambda_j'' : = \Lambda \cap \left(I_j'' \times \mb{R} \right)$.
\end{definition}

\begin{lemma} \label{Prop:AJBJs}
It holds that 
\[
|a_j|, |b_j|\leq^C \beta \omega_j \ell_j 
\]
where $\omega_j : = \omega (z_j)$.
\end{lemma}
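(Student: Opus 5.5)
The plan is to estimate the two integrals directly: bound the integrand pointwise, then multiply by the area of the domain of integration. By definition, $E_j(s,z) = \psi_j(z) e(z)\tanh(s)$, so
\[
a_j = \int_{I''_j} \psi_j(z) e(z) \int_{-\ell(z)}^{\ell(z)} \tanh^2(s)\, ds\, dz, \qquad b_j = \int_{I''_j} \psi_j(z) e(z)(z - z_j) \int_{-\ell(z)}^{\ell(z)} \tanh^2(s)\, ds\, dz .
\]
Here we use that $\Lambda''_j$ is the part of $\Lambda$ lying over $z \in I''_j$; the paper writes $I''_j \times \mb{R}$, but the intended slab is clearly the one over $I''_j$ in the $z$-variable.

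First, the inner integral is bounded trivially: since $\tanh^2 \leq 1$, we have $\int_{-\ell(z)}^{\ell(z)} \tanh^2(s)\,ds \leq 2\ell(z)$. Next, on $I''_j$ we have $0\le\psi_j\le 1$, $|z - z_j| \leq 3\pi/2$, and, from the hypothesis $\|e\|_{0,\alpha}\le \beta\omega$, the pointwise bound $|e(z)| \leq \beta\omega(z)$. So each integrand is bounded by $C\beta\,\omega(z)\,\ell(z)$, and the $z$-interval has fixed length $3\pi$.

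Finally, we replace $\omega(z)$ and $\ell(z)$ by their values at $z_j$. For $z \in I''_j$ we have $|z - z_j| \leq 3\pi/2$, which is at most $\lambda^{-3\epsilon_0/4}(z_j)$ once $\ol{\lambda}$ is small. Condition (\ref{WeightFunctionCondition}) then gives $\omega(z) \leq (1+\rho_0)\,\omega_j \le 2\omega_j$. For $\ell$, the derivative bound (\ref{EllDIFFIN}) gives $|\ell(z) - \ell_j| \leq C$ on this interval; since $\ell_j \geq 1$ for small $\ol{\lambda}$, this yields $\ell(z) \leq C\ell_j$ (this is Remark \ref{EllComparable}). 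Putting these together gives $|a_j|, |b_j| \leq C\beta\omega_j\ell_j$, with $C$ independent of $j$.

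There is no real obstacle here. The only point needing care is that the constants in the comparability of $\omega$ and $\ell$ across $I''_j$ are uniform in $j$, which follows from the smallness of $\ol{\lambda}$ exactly as in the earlier lemmas.
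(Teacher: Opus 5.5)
Your proposal is correct and takes essentially the same route as the paper: bound the integrand by $C\beta\omega_j$ times the area of $\Lambda''_j$ (of order $\ol{\ell}_j$), then replace $\ol{\ell}_j$ by $\ell_j$ via Remark \ref{EllComparable}. The only difference is that you spell out what the paper leaves implicit, namely the comparability of $\omega$ and $\ell$ across $I''_j$ and the harmless extra factor $|z-z_j|\le 3\pi/2$ in $b_j$.
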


\begin{proof}
We have:
\begin{align*}
a_j  & = \int_{\Lambda''_j}  E_j\tanh(s) \\
& \leq^C \| e_j\|_{2, \alpha}\left| \Lambda''_j\right| \\
& \leq^C \beta \omega_j \ol{\ell}_j \\
& \leq^C \beta \omega_j \ell_j.
\end{align*}
Observe that in the last line above we have replaced $\ol{\ell}_j$ with $\ell_j$ (See Remark \ref{EllComparable}).
\end{proof}

\begin{lemma} \label{Prop:TAJBJs}
There is a function $\wh{\Theta}_j: \mb{R}^2 \rightarrow \mb{R}$ supported on $I'_j \times \mb{R}$  satisfying the estimates
\[
\left\|\wh{\Theta}_j \right\|_{2, \alpha} \leq^C \beta \omega_j \left( 1 + s^2 \right)
\]
and such that 
\[
\int_{\Lambda_j} \left(\wt{\mc{L}} \wh{\Theta}_j\right) \tanh(s) = a_j, \quad \int_{\Lambda_j}\left( \wt{\mc{L}} \wh{\Theta}_j\right)\left(z - z_j \right) \tanh(s) = b_j.
\]
\end{lemma}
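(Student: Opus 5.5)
We construct $\wh{\Theta}_j$ as a linear combination of the two functions in Lemma \ref{Prop:TProjectors}:
\[
\wh{\Theta}_j := A_j\, \varphi_j(z) f(s) + B_j\, (z - z_j)\varphi_j(z) f(s), \qquad f = \tfrac12 s^2\tanh(s) - s,
\]
with constants $A_j, B_j$ to be determined. Since $\varphi_j$ is supported in $I'_j$, $\wh{\Theta}_j$ is supported on $I'_j\times\mb{R}$. The bound $|f|+|f'|+|f''| \leq^C 1+s^2$, combined with the smoothness of $\varphi_j$ (whose derivatives are uniformly bounded, since it is a translate of a fixed bump), gives $\|\wh{\Theta}_j\|_{2,\alpha}\leq^C (|A_j|+|B_j|)(1+s^2)$. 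It therefore suffices to solve for $A_j,B_j$ and show $|A_j|+|B_j|\leq^C \beta\omega_j$.

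Since $\varphi_j$ is supported in $I'_j\subset I_j$, the integrals over $\Lambda_j$ in the statement are computed along the segments $[-\ell(z),\ell(z)]\times\{z\}$. By Lemma \ref{Prop:TProjectors}(1), $\wt{\mc{L}}(g(z)f)=g\tanh(s)+g''f$. Hence the two target functionals are linear in $(A_j,B_j)$ with a $2\times 2$ matrix $M$ whose entries are
\[
M_{pq}=\int_{\Lambda_j}\left(\phi_q\tanh(s)+\phi_q'' f\right)(z-z_j)^{p}\tanh(s),\qquad \phi_0=\varphi_j,\ \phi_1=(z-z_j)\varphi_j .
\]
Lemma \ref{Prop:TProjectors}(2),(3) evaluate the diagonal entries on the rectangle $R_j$, where the $s$-domain is independent of $z$ and the $\phi''f$ term integrates to zero by integration by parts in $z$. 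The off-diagonal entries vanish on $R_j$ by parity: $\varphi_j$ is even about $z_j$, so $\varphi_j(z-z_j)$ and $\varphi_j''(z-z_j)$ integrate to zero, and $\int ((z-z_j)\varphi_j)''\,dz=0$. Thus on $R_j$ the matrix is diagonal with entries $\approx\int\varphi_j\tanh^2(s)\sim \ell_j$, comparable to $\ell_j$ from above and below.

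The main obstacle is that $\Lambda_j$ is not a rectangle: $\ell(z)$ varies over $I_j$, so the diagonal identities and the cancellations are not exact. I expect to handle this perturbatively. The discrepancy between integrating over $[-\ell(z),\ell(z)]$ and over $[-\ol{\ell}_j,\ol{\ell}_j]$ involves only $s\in[\ell(z),\ol{\ell}_j]$. By (\ref{EllDIFFIN}) this interval has length $\leq^C\lambda^{(1-\tau)\epsilon_0}$, and the integrand there is $O(\ell_j^2)$ because $|f|\lesssim s^2$. Each entry of $M$ therefore differs from its rectangle value by $O(\ell_j^2\lambda_j^{(1-\tau)\epsilon_0})$, which is $o(1)$ by Lemma \ref{Prop:EllBounds}, assuming $\ol{\lambda}$ small. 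So $M = \mathrm{diag}(d_0,d_1)+O(o(1))$ with $d_i\sim\ell_j$, and $M$ is invertible with $\|M^{-1}\|\leq^C\ell_j^{-1}$.

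Finally, set $(A_j,B_j)=M^{-1}(a_j,b_j)$. Lemma \ref{Prop:AJBJs} gives $|a_j|,|b_j|\leq^C\beta\omega_j\ell_j$, so $|A_j|+|B_j|\leq^C\beta\omega_j$, which yields the claimed estimate. (Should $\Lambda_j$ in the statement instead be read as the rectangle $R_j$, the perturbation step is unnecessary and the diagonal identities of Lemma \ref{Prop:TProjectors} give $M$ exactly.)
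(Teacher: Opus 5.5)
Your proposal is correct and follows the paper's proof. You use the same ansatz $A_j\varphi_j f + B_j(z-z_j)\varphi_j f$, the same exact diagonal identities on $R_j$ from Lemma \ref{Prop:TProjectors} and parity, and the same perturbative bound on the $R_j\setminus\Lambda_j$ contribution via (\ref{EllDIFFIN}) and Lemma \ref{Prop:EllBounds}. The only difference is that you invert the near-diagonal $2\times 2$ matrix directly instead of iterating, which is the closed form of the paper's Neumann-series iteration and makes that step cleaner.
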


\begin{proof}
Define 
\[
\wh{\Theta}_j = A_j \varphi_j f + B_j \varphi_j \left(z_j - z \right) f
\]
for constants $A_j$ and $B_j$ to be determined and where $f$ is as in Lemma \ref{Prop:TProjectors} (\ref{Prop:TProjectors1}).  As an initial choice we take
\[
A_j = a_j/\int_{R_j}\varphi_j \tanh^2(s), \quad B_j = b_j/\int_{R_j}\varphi_j \left( z - z_j \right)^2\tanh^2(s).
\]
We then have
\begin{align}\label{AJBJBOUNDS}
\left|A_j \right| \leq C \ell_j^{-1} \left|a_j \right|, \quad \left|B_j \right| \leq C \ell_j^{-1} \left|b_j\right|
\end{align}
and thus by Lemma \ref{Prop:AJBJs} the function $\wh{\Theta}_j$ satisfies the bound in the statement of the lemma.  
By Lemma \ref{Prop:TProjectors} (\ref{Prop:TProjectors2}) and using that the two terms defining $\wh{\Theta}_j$ are odd and even with respect to reflections through the line $z = z_j$, respectively, we have
\[
\int_{R_j} \wt{\mc{L}}\wh{\Theta}_j \tanh(s) =  \left(a_j/\int_{R_j}\varphi_j \tanh^2(s)\right)\int_{R_j} \wt{\mc{L}}\left(\varphi_j f \right) \tanh(s) = a_j
\]
and similarly
\[
\int_{R_j} \wt{\mc{L}}\wh{\Theta}_j (z - z_j)\tanh(s)  = b_j.
\]
Set 
\begin{align*}
a_j' =  \int_{R_j\setminus \Lambda_j} \left(\wt{\mc{L}}\wh{\Theta}_j\right) \tanh(s), \quad b_j'  =    \int_{R_j\setminus \Lambda_j} \left(\wt{\mc{L}}\wh{\Theta}_j \right) (z_j - z)\tanh(s).
\end{align*}
Using  the derivative estimate (\ref{EllDIFFIN}) for $\ell_{\tau}$ we have
\begin{align*}
\left|a_j'\right|  & \leq^C \sup_{z \in I_j}\left| \ell(z) - \ol{\ell}_j\right|\left\| \wh{\Theta}_j \right\|_{2, \alpha} \\
&  \leq^C \lambda_j^{ (1 - \tau) \epsilon_0}\left(\left| A_j\right| + \left| B_j \right|\right) (1 + s^2) \\
& \leq^C \lambda_j^{(1 - \tau) \epsilon_0}\left(\left( \left| a_j\right| + \left| b_j\right|\right)/\ol{\ell}_j \right)\left(\ol{\ell}_j\right)^2 \\
& \leq^C \lambda_j^{  (1-  \tau)\epsilon_0} \ol{\ell}_j \left(\left| a_j\right| + (\left| b_j\right| \right).
\end{align*}
and similarly, 
\begin{align} \label{bjStarEst}
\left|b_j'\right|  \leq^C \lambda_j^{ (1 -  \tau)\epsilon_0} \ol{\ell}_j\left( \left| a_j\right| + \left| b_j\right| \right).
\end{align}
Lemma \ref{Prop:EllBounds} then gives
\begin{align*}
 \lambda_j^{ (1 -  \tau)\epsilon_0} \ol{\ell}_j & \leq C  \lambda_j^{ \epsilon_0/2} \ell_j   \\
 & < \delta
 \end{align*}
 by taking $\ol{\lambda}$  small in terms of $\delta$ and assuming $\ol{\tau} < 1/2$. In particular, we have
 \[
\left|a'_j \right| \leq \delta \left(\left| a_j\right| + \left| b_j\right|\right), \quad \left|b'_j \right| \leq \delta\left( \left| a_j\right| +  \left| b_j\right| \right),
\]
for any $\delta > 0$.
An exact solution satisfying the claimed estimates can then be found by iteration and taking $\delta$ sufficiently small. 
\end{proof}
\begin{definition} \label{Def:HAtSolution}
We set
\[
\wh{E}_j : = E_j - \mc{\wt{L}}\left(\wh{\Theta}_j\right).
\]
\end{definition}

\begin{lemma} \label{Prop:HatSolution}
The following statements hold
\begin{enumerate}
\item  \label{Prop:HatSolution2} $\left\| \wh{E}_j\right\|_{0, \alpha}\leq^C\beta  \omega_j (1 + s^2)$. \\
\item \label{Prop:HatSolution3} It holds that 
\[
\int_{\Lambda''_j} \wh{E}_j \tanh (s)= \int_{\Lambda''_j} \wh{E}_j  \left(z - z_j \right) \tanh(s) = 0.
\]
\end{enumerate}
\end{lemma}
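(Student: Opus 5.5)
Both statements follow directly from Definition \ref{Def:HAtSolution}, which writes $\wh{E}_j$ as the difference $E_j - \wt{\mc{L}}\wh{\Theta}_j$. We bound the two pieces separately and then use the defining property of $\wh{\Theta}_j$ from Lemma \ref{Prop:TAJBJs}.

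For (\ref{Prop:HatSolution2}), note first that $E_j = \psi_j e(z)\tanh(s)$. Since $\|e\|_{0,\alpha}\leq \beta\omega$, $\psi_j$ is a fixed smooth function, and $\tanh$ is bounded with all derivatives, we get $\|E_j\|_{0,\alpha}\leq^C \beta\omega$. By Remark \ref{EllComparable}, $\omega$ is comparable to $\omega_j$ on $I''_j$, so $\|E_j\|_{0,\alpha}\leq^C \beta\omega_j$. Next, $\wt{\mc{L}} = \Delta + 2\cosh^{-2}(s)$ is a second order operator with smooth bounded coefficients, so $\|\wt{\mc{L}}\wh{\Theta}_j\|_{0,\alpha}\leq^C \|\wh{\Theta}_j\|_{2,\alpha}$ pointwise in the localized norms. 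Lemma \ref{Prop:TAJBJs} then gives $\|\wt{\mc{L}}\wh{\Theta}_j\|_{0,\alpha}\leq^C \beta\omega_j(1+s^2)$. The triangle inequality yields the claim, using that $1\leq 1+s^2$ absorbs the bound for $E_j$.

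For (\ref{Prop:HatSolution3}), start from the definition of $a_j,b_j$ in Definition \ref{Def:TAJBJs}:
\[
\int_{\Lambda''_j} E_j\tanh(s) = a_j,\qquad \int_{\Lambda''_j} E_j (z-z_j)\tanh(s) = b_j.
\]
Lemma \ref{Prop:TAJBJs} gives the same values $a_j$ and $b_j$ for the corresponding integrals of $\wt{\mc{L}}\wh{\Theta}_j$ over $\Lambda_j$. Subtracting then gives both vanishing statements, provided the domains of integration agree. This is the only point needing care, and it is where I would spend the effort. The function $\wh{\Theta}_j$, hence $\wt{\mc{L}}\wh{\Theta}_j$, is supported in $I'_j\times\mb{R}$. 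Since $I'_j\subset I_j\subset I''_j$, the integral of $\wt{\mc{L}}\wh{\Theta}_j$ against $\tanh(s)$ (resp. $(z-z_j)\tanh(s)$) over $\Lambda_j$ equals the integral over $\Lambda''_j$, as both equal the integral over $\Lambda\cap(I'_j\times\mb{R})$.

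I would also check that the iteration producing the exact solution in Lemma \ref{Prop:TAJBJs} keeps the support of each correction in $I'_j$. It does, since each correction is again a combination of $\varphi_j f$ and $\varphi_j(z-z_j)f$. With the domains identified, subtracting the identities gives
\[
\int_{\Lambda''_j}\wh{E}_j\tanh(s) = a_j - a_j = 0,\qquad \int_{\Lambda''_j}\wh{E}_j(z-z_j)\tanh(s) = b_j - b_j = 0,
\]
completing the proof.
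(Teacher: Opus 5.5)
Your proposal is correct and is essentially the paper's argument: part (1) is the triangle inequality combined with the bound on $\wh{\Theta}_j$ from Lemma \ref{Prop:TAJBJs}, and part (2) is the subtraction $a_j - a_j = b_j - b_j = 0$. Your explicit check that $\wt{\mc{L}}\wh{\Theta}_j$ is supported where $\varphi_j$ is, so that integrating over $\Lambda_j$ or $\Lambda''_j$ gives the same value, fills in a step the paper leaves implicit; one small correction there is that with the stated definitions $I''_j = [z_j - 3\pi/2, z_j + 3\pi/2]$ is contained in $I_j = [z_j - 2\pi, z_j + 2\pi]$ rather than the reverse, but your argument only uses that the support of $\varphi_j$ lies in both intervals, which is true.
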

\begin{proof}
(\ref{Prop:HatSolution2}) is an immediate consequence of the estimate for $ \wh{\Theta}_j$ in Lemma \ref{Prop:TAJBJs} and the definition of $E$. (\ref{Prop:HatSolution3}) follows directly from the definition of $\wh{E}_j$.
\end{proof}

\begin{definition} \label{Def:BarSolutions}
We set
\begin{enumerate}
\item  \label{Def:BarSolutions1}
\[
\theta_j (z) : = \frac{\int_{-\ell(z)}^{\ell(z)} \wh{E}_j(s, z)  \tanh(s)ds}{\int_{-\ell_j}^{\ell_j} \tanh^2(s) ds}
\]
\item  \label{Def:BarSolutions2}
\begin{align*}
\ol{\Theta}_j(z)  & : = \left(z  - z_j\right)\int_{-\infty}^{z}\theta_j(z') dz' - \int_{-\infty}^{z_j}  \left(z' - z_j\right)\theta_j(z') dz'
\end{align*}

\end{enumerate}
\end{definition}

\begin{lemma} \label{Prop:BarSolutions}
The functions $\theta$ and $\ol{\Theta}$ are supported on $I''_{j}$ and it holds that 
\begin{enumerate}
\item \label{Prop:BarSolutions1} $\left\|\theta_j\right\|_{0, \alpha} \leq^C \beta \ell_j^2\omega_j$.  \\
\item \label{Prop:BarSolutions2} $\left\|\ol{\Theta}_j \right\|_{2, \alpha} \leq^C \beta \ell_j^{2}  \omega_j$.
\end{enumerate}
\end{lemma}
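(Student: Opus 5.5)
We establish the support claim first, then the bound on $\theta_j$, and finally the bound on $\ol{\Theta}_j$, which comes from integrating twice in $z$.

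\emph{Support.} Both $E_j=\psi_jE$ and $\wh{\Theta}_j$ vanish outside $I''_j\times\mb{R}$, the latter because it is supported on $I'_j\times\mb{R}$ and $I'_j\subset I''_j$. Hence $\wh{E}_j=E_j-\wt{\mc{L}}\wh{\Theta}_j$ and $\theta_j$ are supported on $I''_j$.

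For $\ol{\Theta}_j$ we use Lemma \ref{Prop:HatSolution} (\ref{Prop:HatSolution3}). By Fubini, $\int_{\Lambda''_j}\wh E_j\tanh(s)=\int\theta_j(z)\,dz\cdot\int_{-\ell_j}^{\ell_j}\tanh^2$, and the same holds with the weight $(z-z_j)$. Since the denominator in the definition of $\theta_j$ is the constant $\int_{-\ell_j}^{\ell_j}\tanh^2(s)\,ds$, the zeroth and first $z$-moments of $\theta_j$ both vanish.

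Below $I''_j$, $\ol{\Theta}_j$ is $0-\int_{-\infty}^{z_j}(z'-z_j)\theta_j$. Since $\theta_j$ vanishes for $z'>\sup I''_j$, this integral equals the full first moment up to a tail living above $z_j$. The constant term therefore needs care, and I will check it by computing directly.

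For $z$ beyond $I''_j$, the zeroth moment vanishes, so $\ol{\Theta}_j(z)=-\int_{-\infty}^{z_j}(z'-z_j)\theta_j$. For $z$ below $I''_j$, $\ol{\Theta}_j(z)=-\int_{-\infty}^{z_j}(z'-z_j)\theta_j$ as well, so $\ol{\Theta}_j$ takes the same constant on both sides. The definition is presumably meant as a solution of $\ol{\Theta}_j''=\theta_j$, i.e.\ $\ol{\Theta}_j(z)=\int_{-\infty}^{z}(z-z')\theta_j(z')\,dz'$. For that formula, vanishing below $I''_j$ is immediate, and above $I''_j$ the value is $z\cdot 0-\int z'\theta_j=0$ by the two vanishing moments. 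I will interpret the stated formula accordingly, with the constant normalized so the value below $I''_j$ is zero; the support claim then follows. The only point needing care is that $\theta_j$ is supported in an interval of length $3\pi$ about $z_j$.

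\emph{Estimate (\ref{Prop:BarSolutions1}).} By Lemma \ref{Prop:HatSolution} (\ref{Prop:HatSolution2}), $|\wh E_j|\le C\beta\omega_j(1+s^2)$, so the numerator of $\theta_j$ is at most $C\beta\omega_j\int_{-\ell}^{\ell}(1+s^2)\,ds\le C\beta\omega_j\ell_j^3$. The denominator is $\approx 2\ell_j$, giving $|\theta_j|\le C\beta\omega_j\ell_j^2$. Here we use Remark \ref{EllComparable} to replace $\ell(z)$ by $\ell_j$ on $I''_j$.

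For the Hölder seminorm, write the numerator as $\int_{-\ell(z)}^{\ell(z)}\wh E_j(s,z)\tanh(s)\,ds$. Differences in $z$ split into two parts:
\begin{itemize}
\item the variation of the integrand, bounded using the local $C^{0,\alpha}$ bound of $\wh E_j$, which is $\le C\beta\omega_j(1+s^2)$ on unit disks;
\item the variation of the endpoints, bounded by $|\ell'|\,|z-w|\sup|\wh E_j|$, where $|\ell'|\le C$ by (\ref{EllDIFFIN}).
\end{itemize}
Both parts give at most $C\beta\omega_j\ell_j^3|z-w|^\alpha$, and dividing by the denominator $\approx\ell_j$ yields the claim.

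\emph{Estimate (\ref{Prop:BarSolutions2}).} Since $\ol{\Theta}_j''=\theta_j$ and $\ol{\Theta}_j$ is supported on an interval of fixed length $3\pi$, $\ol{\Theta}_j'$ and $\ol{\Theta}_j$ are controlled by integrating $\theta_j$ over a bounded interval, so each is $\le C\sup|\theta_j|$. The $C^{2,\alpha}$ norm then reduces to $\|\theta_j\|_{0,\alpha}+C\sup|\theta_j|\le C\beta\ell_j^2\omega_j$. As a function of $(s,z)$ that is constant in $s$, its $\mb{R}^2$ Hölder norm equals its one-variable norm.

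The main obstacle is the Hölder estimate for $\theta_j$, in particular the moving endpoints $\pm\ell(z)$. The endpoint terms are where the bound $\wh E_j\sim(1+s^2)\sim\ell^2$ enters, and this is what produces the $\ell_j^2$ factor.
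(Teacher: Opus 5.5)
Your proposal is correct and follows the paper's argument. The two vanishing $z$-moments of $\theta_j$ from Lemma \ref{Prop:HatSolution} (\ref{Prop:HatSolution3}) give the support. Integrating the bound $\|\wh E_j\|_{0,\alpha}\le C\beta\omega_j(1+s^2)$ over $|s|\le\ell$ and dividing by $\int_{-\ell_j}^{\ell_j}\tanh^2\approx 2\ell_j$ gives (\ref{Prop:BarSolutions1}). Integrating $\ol\Theta_j''=\theta_j$ over an interval of length $3\pi$ gives (\ref{Prop:BarSolutions2}).

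Your corrected formula $\ol\Theta_j(z)=\int_{-\infty}^{z}(z-z')\theta_j(z')\,dz'$ is exactly the one the paper's proof actually uses, with upper limit $z$ in the second integral. Note that it differs from the printed definition by the non-constant term $\int_{z_j}^{z}(z'-z_j)\theta_j$, so this is a typo fix rather than a normalization of a constant.

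One small correction to your closing remark: the factor $\ell_j^2$ comes from the bulk term $\int_{-\ell}^{\ell}(1+s^2)\,ds/\ell_j$. The moving endpoints contribute only $O(\beta\omega_j\ell_j|z-w|)$ after dividing by the denominator.
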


\begin{proof}
From Proposition \ref{Prop:HatSolution} (\ref{Prop:HatSolution2}) we have
\begin{align*}
\left\|\theta_j  \right\|_{0, \alpha}  & \leq  \left\| \wh{E}_j(s, z)\right\|_{0, \alpha} \\
& \leq^C \beta \ell_j^2 \omega_j.   \\
\end{align*}
which is (\ref{Prop:BarSolutions1}). Moreover, $\theta_j$ is clearly supported in $I''_j$ since $\wh{E}_j$ is.  For (\ref{Prop:BarSolutions2}). Observe that  the orthogonality conditions on $\wh{E}$ in Lemma \ref{Prop:HatSolution} (\ref{Prop:HatSolution3}) give
\begin{align*}
 \int_{I''_j} \theta_j(z')  dz'   & = \frac{1}{\int_{-\ell_j}^{\ell_j} \tanh^2(s) ds} \int_{I''_j} \int_{- \ell(z')}^{\ell(z')} \wh{E}(s, z') \tanh(s) ds\, dz'\\
 & =   \frac{1}{\int_{-\ell_j}^{\ell_j} \tanh^2(s) ds} \int_{\Lambda''_j} \wh{E}_j(s, z) \tanh(s)\\ 
 & = 0.
 \end{align*}
A similar argument gives
 \begin{align*}
  \int_{I''_j} \left(z - z_j\right) \theta(z')  dz'  & = \frac{1}{\int_{-\ell_j}^{\ell_j} \tanh^2(s) ds} \int_{\Lambda''_j} \wh{E}(s, z) (z_j - z)\tanh(s)\\
  &  = 0.
  \end{align*}
  Thus,  $\ol{\Theta}_j$  is supported on  $I_j''$. For  $z \in I''_j$ we have
\begin{align*}
\left\|\ol{\Theta}_j(z)\right\|_{2, \alpha} & = \left\|\left(z  - z_j\right) \int_{z_j - 3\pi/2}^z \theta(z') dz' - \int_{z_j - 3\pi/2}^{z} (z' - z_j) \theta(z') dz'  \right\|_{2, \alpha} \\
& \leq^C\left\| \theta \right\|_{0, \alpha} \\
& \leq^C \beta \ell_j^2\omega_j. 
\end{align*}
where the last line above follows from  (\ref{Prop:BarSolutions1}).
\end{proof}

\begin{proof}[Proof of Proposition \ref{Prop:TanhProjectors}]
We set $\Theta_j =  \ol{\Theta}_j \tanh(s) + \wh{\Theta}_j$ and we put $\Theta = \sum_j \Theta_j$. Since $\wt{\mc{L}}\tanh(s) = 0$, we have
\[
\mc{\wt{L}}\left( \ol{\Theta}_j\tanh(s)\right) = \ol{\Theta}_j'' \tanh(s) = \theta_j \tanh(s).
\]
and thus by constuction
\begin{align*}
\int_{-\ell(z)}^{\ell(z)} \left(\wt{\mc{L}}\Theta_j \right)\tanh(s) ds & = \int_{-\ell(z)}^{\ell(z)} \left(\wt{\mc{L}}\wh{\Theta}_j\right)\tanh(s) ds +   \int_{-\ell(z)}^{\ell(z)} \wt{\mc{L}}\left(\ol{\Theta}_j\tanh(s))\right) \tanh(s) ds \\
& = \int_{-\ell(z)}^{\ell(z)} \left(E_j - \wh{E}_j\right) \tanh(s) ds + \int_{-\ell(z)}^{\ell(z)}  \theta_j \tanh^{2}(s) ds \\
& =  \int_{-\ell(z)}^{\ell(z)} \left(E_j - \wh{E}_j\right) \tanh(s) ds  + \left(\frac{\int_{- \ell(z)}^{\ell(z)} \tanh^2(s)ds}{\int_{- \ell_j}^{\ell_j} \tanh^2(s)ds} \right) \int_{-\ell(z)}^{\ell(z)} \wh{E}_j \tanh(s)ds \\
& = e_j(z)\int_{-\ell(z)}^{\ell(z)} \tanh^{2}(s) ds + \left(\frac{\int_{- \ell(z)}^{\ell(z)} \tanh^2(s)ds}{\int_{- \ell_j}^{\ell_j} \tanh^2(s)ds} -1\right) \int_{-\ell(z)}^{\ell(z)} \wh{E}_j \tanh(s)ds \\
& =  e_j(z)\int_{-\ell(z)}^{\ell(z)} \tanh^{2}(s) ds +e'_j (z)\int_{-\ell(z)}^{\ell(z)} \tanh^{2}(s) ds
\end{align*}
where $e'_j (z)$ is defined  implicitly by the last equality above and where we have set $e_j : = \psi_j e$. We have
\begin{align*}
\left\|\int_{- \ell(z)}^{\ell(z)} \tanh^2(s)ds - \int_{- \ell_j}^{\ell_j} \tanh^2(s)ds \right\|_{0, \alpha} & \leq C \sup_{z \in I''_j} \left\| \ell(z) - \ell_j\right\|_{0, \alpha}\\
& \leq C \lambda_j^{(1 - \tau) \epsilon_0}
\end{align*}
and thus
´\begin{align*}
\left\|e'_j \right\|_{0, \alpha} & \leq C \left(\ell_j^{-2}  \lambda_j^{(1 - \tau) \epsilon_0}\right) \left(\beta \ell_j^3  \omega_j \right) \\
& \leq C \beta \ell_j  \lambda_j^{(1 - \tau) \epsilon_0}\omega_j.
\end{align*}
By Lemma \ref{Prop:EllBounds} we can take $\ol{\lambda}$ small such that 
\[
\left\|e'_j \right\|_{0, \alpha} \leq \delta \beta \omega_j
\]
for arbitrary $\delta > 0$. Since the supports of $e'_j$ are contained in $I''_j$,  by Remark \ref{EllComparable}  the sum $e' : = \sum_j e'_j$  then satisfies
\[
\left\|e' \right\|_{0, \alpha} \leq C \delta \beta \omega.
\]
 We can then iterate to find an exact solution $\Theta$ satisfying the claimed bounds.
 \end{proof}

\subsection{Prescribing strong $\cosh^{-1}(s)$ orthogonality}
\begin{proposition} \label{Prop:CoshProjectors}
There is a  constant $C >0$  such that: Given $e:\mb{R} \rightarrow \mb{R}$ with $\| e\|_{0, \alpha} \leq \beta \omega$ for a positive constant  $\beta$, there is a function $\Theta: \Lambda \rightarrow \mb{R}$ such that  
\[
\int_{-\ell}^{\ell} \left(\wt{\mc{L}}\Theta \right)\cosh^{-1}(s) ds = e (z)\int_{-\ell}^\ell \cosh^{-2}(s) ds
\]
and satisfying the estimates
\[
\left\|\Theta \right\|_{2, \alpha} \leq^C \beta \omega \cosh(s)
\]
and
\[
\left\|\wt{\mc{L}} \Theta\right\|_{2, \alpha} \leq^C \beta \omega(\cosh^{-1}(s) + \ell^{- \sigma}\cosh(s)).
\]
\end{proposition}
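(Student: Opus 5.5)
The plan is to follow the two–step scheme used for Proposition \ref{Prop:TanhProjectors}, with the $\tanh(s)$ ingredients replaced by their $\cosh^{-1}(s)$ analogues. The new feature is the $z$–dynamics. Since
\[
\wt{\mc{L}}\left(\theta(z)\cosh^{-1}(s)\right) = \left(\theta'' + \theta\right)\cosh^{-1}(s),
\]
the residual density problem becomes the ODE $\theta'' + \theta = \text{(density)}$ instead of $\theta'' = \text{(density)}$. Its homogeneous solutions are $\cos(z - z_j)$ and $\sin(z - z_j)$. So a compactly supported solution on an interval exists exactly when the localized density is orthogonal to $\cos(z - z_j)$ and $\sin(z - z_j)$ there. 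These play the role of the moments $1$ and $(z - z_j)$ used in Lemma \ref{Prop:HatSolution}.

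\emph{Step 1: localize the density.} Write $E = e(z)\cosh^{-1}(s)$ and localize it to the variable-height slabs $\Lambda^{(\sigma)}_j$ from Section \ref{Sec:Preliminaries}, rather than to slabs of fixed height. These slabs are used so that all $z$–derivatives of the cutoffs cost $\left(h^{(\sigma)}_j\right)^{-1} = \ell_j^{-\sigma}$, by (\ref{VarphiSigmaEst}). This is the source of the term $\ell^{-\sigma}\cosh(s)$ in the estimate for $\wt{\mc{L}}\Theta$.

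\emph{Step 2: correct the two moments on each slab.} Build a moment-correcting function
\[
\wh{\Theta}_j = A_j\, \varphi^{(\sigma)}_j(z)\cos(z - z_j)\, g(s) + B_j\, \varphi^{(\sigma)}_j(z)\sin(z - z_j)\, g(s).
\]
For the profile $g$, my first choice is $g = \cosh(s)$, or a combination of $\cosh(s)$ and $\cosh^{-1}(s)$. Note that $(\partial_{ss} + 2\cosh^{-2}(s) - 1)\cosh(s) = 2\cosh^{-1}(s)$, and $\cosh(s)$ has large $\cosh^{-1}(s)$–density of size comparable to $\ell$. This choice gives
\[
\wt{\mc{L}}\wh{\Theta}_j = \varphi^{(\sigma)}_j \left(A_j\cos + B_j\sin\right)\cdot 2\cosh^{-1}(s) + O\left(\ell_j^{-\sigma}\left(|A_j| + |B_j|\right)\cosh(s)\right).
\]
The error term comes from $2(\varphi^{(\sigma)}_j)'(\cos, \sin)'$ and $(\varphi^{(\sigma)}_j)''$. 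Solve for $A_j$ and $B_j$ by inverting a $2\times 2$ Gram-type matrix with entries $\int \varphi^{(\sigma)}_j \cos^2$, $\int \varphi^{(\sigma)}_j \cos\sin$, and so on, over an interval of length $\approx \ell_j^{\sigma}$. This matrix is uniformly invertible, essentially diagonal with entries of size $h^{(\sigma)}_j$. The moments of $E_j$ are bounded by $\beta\omega_j h^{(\sigma)}_j$. Hence $|A_j| + |B_j| \leq^C \beta\omega_j$, which yields $\|\wh{\Theta}_j\|_{2,\alpha} \leq^C \beta\omega\cosh(s)$ and the stated bound on $\wt{\mc{L}}\Theta$.

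As in Lemma \ref{Prop:TAJBJs}, the moments are computed first on the rectangle $R^{(\sigma)}_j$ and then corrected for the part of $R^{(\sigma)}_j$ lying outside $\Lambda^{(\sigma)}_j$. The same iteration as there gives exact orthogonality on each slab.

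\emph{Step 3: integrate the ODE.} With $\wh{E}_j := E_j - \wt{\mc{L}}\wh{\Theta}_j$, define $\theta_j$ as in Definition \ref{Def:BarSolutions}, but normalized by $\int_{-\ell_j}^{\ell_j}\cosh^{-2}(s)\,ds$. Solve $\ol{\Theta}_j'' + \ol{\Theta}_j = \theta_j$ by variation of parameters, starting from $-\infty$. Orthogonality to $\cos(z - z_j)$ and $\sin(z - z_j)$ makes $\ol{\Theta}_j$ vanish beyond $I^{(\sigma)}_j$. However, the variation-of-parameters kernel is only bounded, not decaying, so $\|\ol{\Theta}_j\|_{2,\alpha} \lesssim h^{(\sigma)}_j \sup|\theta_j|$. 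This is the step I worry about most.

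To control it, I would take the decomposition so that the parts of $\theta_j$ not already of size $\beta\omega\ell^{-\sigma}$ are essentially $e$ itself. In other words, $\wh{E}_j$ should carry the bulk density $e$ and $\wh{\Theta}_j$ should only correct the two moments. Then $\ol{\Theta}_j\cosh^{-1}(s)$ is bounded by $\beta\omega$, consistent with the $\cosh^{-1}(s)$ term in the estimate for $\wt{\mc{L}}\Theta$. Set $\Theta = \sum_j\left(\ol{\Theta}_j\cosh^{-1}(s) + \wh{\Theta}_j\right)$.

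\emph{Main obstacle: the mismatch error.} The hardest step is the error caused by the difference between $\ell(z)$ and $\ell_j$, and between $\ell(z)$ and $\ol{\ell}^{(\sigma)}_j$. This error enters both the moment correction and the ratio $\int_{-\ell(z)}^{\ell(z)}\cosh^{-2} \big/ \int_{-\ell_j}^{\ell_j}\cosh^{-2}$. Because $\wh{\Theta}_j$ grows like $\cosh(s)$, its density over the strip $\ell(z) \leq |s| \leq \ol{\ell}^{(\sigma)}_j$ is weighted by $\cosh(\ol{\ell})\cosh^{-1}(\ell)\left|\ol{\ell} - \ell\right|$, not merely by $\left|\ol{\ell} - \ell\right|$.

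I would bound this using (\ref{EllDIFFIN}) over an interval of length $\ell_j^{\sigma}$, which gives $\left|\ol{\ell} - \ell\right| \leq^C \ell_j^{\sigma}\lambda_j^{(1-\tau)\epsilon_0}$. Lemma \ref{Prop:EllBounds} then makes this, and hence $e^{|\ol{\ell} - \ell|} - 1$, smaller than any $\delta$ once $\ol{\lambda}$ is small. The resulting error density $e'$ satisfies $\|e'\|_{0,\alpha} \leq \delta\beta\omega$, and iterating as at the end of the proof of Proposition \ref{Prop:TanhProjectors} produces the exact $\Theta$.
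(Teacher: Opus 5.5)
\textbf{Same route as the paper, with a real gap.} Your outline follows the paper's proof step for step: the partition $\psi^{(\sigma)}_j$ over the slabs $\Lambda^{(\sigma)}_j$; removal of the two resonant moments with $\varphi^{(\sigma)}_j\cos(z)\cosh(s)$ and $\varphi^{(\sigma)}_j\sin(z)\cosh(s)$; variation of parameters for $\ol{\Theta}_j''+\ol{\Theta}_j=\theta_j$; and an iteration absorbing the mismatch between $\ell(z)$ and $\ell^{(\sigma)}_j$. The gap is the one you flagged in Step 3, and your remedy does not close it, for two reasons.

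First, $\theta_j$ is not $e_j$ up to $O(\beta\omega\ell^{-\sigma})$. It also contains the density of $\wt{\mc L}\wh{\Theta}_j$, essentially $-2\varphi^{(\sigma)}_j\left(A_j\cos(z-z_j)+B_j\sin(z-z_j)\right)$, which is of size $\beta\omega$ and resonant.

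Second, and more fundamentally, no bound on $\sup|\theta_j|$ bounds $\ol{\Theta}_j$ uniformly. Variation of parameters controls $\ol{\Theta}_j$ through the partial moments $\int_{-\infty}^{z}\theta_j(z')e^{\pm iz'}dz'$. These vanish beyond the slab but are generically of size $h^{(\sigma)}_j\sup|\theta_j|$ inside it. For $e=\beta\omega\cos z$ one gets $\theta_j\approx\beta\omega\cos(z)\left(\psi^{(\sigma)}_j-c_j\varphi^{(\sigma)}_j\right)$, with $c_j$ fixed by the vanishing total moment. Since the two profiles differ, $\ol{\Theta}_j$ reaches size $\beta\omega\ell_j^{\sigma}$, so $\ol{\Theta}_j\cosh^{-1}(s)$ has that size near $s=0$. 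The $\cosh^{-1}(s)$ term in the bound for $\wt{\mc L}\Theta$ comes from $\theta_j\cosh^{-1}(s)$ and says nothing about $\ol{\Theta}_j$.

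This is exactly Lemma \ref{Prop:CBarSolutions}(\ref{Prop:CBarSolutions2}) of the paper. So the construction, yours and the paper's alike, only yields
\[
\left\|\Theta\right\|_{2,\alpha}\leq^C\beta\omega\left(\ell^{\sigma}\cosh^{-1}(s)+\cosh(s)\right).
\]
The paper passes to the stated bound without justification, and your proof inherits that gap rather than repairing it.

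\textbf{No decomposition can repair it.} The two stated estimates are incompatible. Take $\lambda$ constant, $\omega\equiv 1$, and $e(z)=\cos(kz)$ with $k=1+\ell^{-1/2}$, so $\beta$ is an absolute constant. The function $f(s)=(k+\tanh s)e^{-ks}=\cosh^{-1}(s)e^{-(k-1)s}+(k-1)e^{-ks}$ solves $f''+2\cosh^{-2}(s)f=k^2f$, with $f'(0)=1-k^2$. Let $\Theta^{+}$ be the $s$-even part of an admissible $\Theta$, and let $C$ be the constant in the two estimates. Green's identity on $[0,\ell]$ gives
\[
(1-k^2)\Theta^{+}(0,z)=\int_0^{\ell}\left(\wt{\mc L}\Theta^{+}\right)f\,ds-\left(G''+k^2G\right)+O\left(Ce^{-(k-1)\ell}\right),\qquad G(z):=\int_0^{\ell}\Theta^{+}f\,ds.
\]
Averaging against $\cos(kz)$ over $[-T,T]$ and letting $T\to\infty$ removes the $G$ term. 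The bound on $\wt{\mc L}\Theta$ gives $\int_0^{\ell}(\wt{\mc L}\Theta^{+})f\,ds=\frac{1}{2}e(z)\int_{-\ell}^{\ell}\cosh^{-2}(s)ds+O\left(C(k-1+\ell^{1-\sigma})\right)$. Combined with $|\Theta(0,z)|\leq C\beta$, this forces $C\gtrsim(k-1)^{-1}=\ell^{1/2}$, which is unbounded as $\ol{\lambda}\to 0$.

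What is provable is the weaker bound displayed above. It is all that Theorem \ref{MainInvertibilityStatement} needs: there $\Theta_C$ carries the factor $\cosh^{-1}(\ell)$, which makes the extra $\ell^{\sigma}\cosh^{-1}(s)$ term exponentially small.
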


\begin{lemma}\label{Prop:PartitionSigma}
There is a  smooth partition of unity $\psi^{(\sigma)}_j(z)$ subordinate to the covering 
\[
I^{(\sigma)''}_j : = \left[z_j^{(\sigma)} - 3/2\pi h_j^{(\sigma)}, z_j^{(\sigma)} - 3/2\pi h_j^{(\sigma)} \right]
\]
 of $\mb{R}$ such that the $k^{th}$ derivative  $ \left(\psi_j^{(\sigma)}\right)^{(k)}$ of $\psi_j^{(\sigma)}$ satisfies:
\[
\left| \left(\psi_j^{(\sigma)}\right)^{(k)}\right| \leq  C \left(h^{(\sigma)}_j\right)^{-k}.
\]
\end{lemma}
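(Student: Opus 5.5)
We build the partition of unity from the bump functions $\varphi^{(\sigma)}_j$, or rather from slightly enlarged versions of them, normalizing by their sum. The covering intervals should read $I^{(\sigma)''}_j = [z^{(\sigma)}_j - \tfrac32\pi h^{(\sigma)}_j,\, z^{(\sigma)}_j + \tfrac32\pi h^{(\sigma)}_j]$; the second sign in the statement is evidently a typo. Fix once and for all a smooth even function $\chi_0\ge 0$ on $\mb{R}$ with $\chi_0 \equiv 1$ on $[-\tfrac{11}{10}\pi, \tfrac{11}{10}\pi]$ and $\mathrm{supp}\,\chi_0 \subset [-\tfrac32\pi, \tfrac32\pi]$. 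Set $\chi_j(z) := \chi_0\big((z - z^{(\sigma)}_j)/h^{(\sigma)}_j\big)$, so that $\chi_j$ is supported in $I^{(\sigma)''}_j$ and satisfies $|\chi_j^{(k)}| \le C_k (h^{(\sigma)}_j)^{-k}$ by the chain rule. This is exactly as in (\ref{VarphiSigmaEst}). We then put $\psi^{(\sigma)}_j := \chi_j / \sum_k \chi_k$.

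\textbf{Covering and finite overlap.} By the argument in the corollary preceding Remark \ref{EllComparable}, with $\ol{\lambda}$ small (via Lemma \ref{Prop:EllBounds} with $\delta=1$ and Lemma \ref{ZBetterInterval} with $\eta = 1/2$), consecutive heights are comparable: $|h^{(\sigma)}_{j}/h^{(\sigma)}_{j-1} - 1| \le q$ for any prescribed $q>0$. Since $z^{(\sigma)}_{j} - z^{(\sigma)}_{j-1} = 2\pi h^{(\sigma)}_{j-1}$, the gap between consecutive centers is at most $2\pi \max(h^{(\sigma)}_{j-1}, h^{(\sigma)}_j)$. Hence each point of $[z^{(\sigma)}_{j-1}, z^{(\sigma)}_j]$ lies within $\tfrac{11}{10}\pi h$ of one of the two centers, where $h$ is that center's own height, once $q$ is small. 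Thus $\sum_k \chi_k \ge 1$ everywhere. The sequence $z^{(\sigma)}_j \to \pm\infty$, because $h^{(\sigma)}_j \ge c>0$ since $\ell$ is bounded below, so the covering is of all of $\mb{R}$. Conversely, a point $z$ lies in $\mathrm{supp}\,\chi_k$ only if $|z - z^{(\sigma)}_k| \le \tfrac32 \pi h^{(\sigma)}_k$. Using comparability of heights again, at most three indices $k\in\{j-1,j,j+1\}$ contribute on $I^{(\sigma)''}_j$, and their heights are all within a factor $2$ of $h^{(\sigma)}_j$.

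\textbf{Derivative estimates.} On $I^{(\sigma)''}_j$ the denominator $S := \sum_k \chi_k$ is a sum of at most three terms. It satisfies $S \ge 1$ and $|S^{(k)}| \le C (h^{(\sigma)}_j)^{-k}$ by the comparability of the contributing heights. The quotient rule (Faà di Bruno for $1/S$) then yields $|(\psi^{(\sigma)}_j)^{(k)}| \le C (h^{(\sigma)}_j)^{-k}$, with $C$ depending only on $k$ and $\chi_0$. Smoothness, nonnegativity and $\sum_j \psi^{(\sigma)}_j = 1$ are immediate.

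\textbf{Main obstacle.} The only delicate point is the uniform comparability of $h^{(\sigma)}_j$ across neighboring intervals, which controls both the covering and the overlap count. It follows from (\ref{EllDIFFIN}) integrated over a distance $\lesssim \ell^\sigma \le \lambda^{-\epsilon_0/2}$ (Lemma \ref{Prop:EllBounds}), exactly as in Corollary \ref{Prop:DomainIntersection}. This requires $\ol{\lambda}$ small depending on $\ol{\sigma}$ and $\tau < \ol{\tau}$.
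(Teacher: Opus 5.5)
Your proof is correct, but it builds the partition differently from the paper. Both arguments rest on the same input: consecutive heights are comparable, $|h^{(\sigma)}_{j+1}/h^{(\sigma)}_j - 1|$ small, obtained as in Corollary \ref{Prop:DomainIntersection}.

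\textbf{The paper's route.} It uses that comparability (with bound $1/6$) to compute the endpoints explicitly and derive the ordering $z^{(\sigma)}_j < z^{(\sigma)}_{j+1,-} < z^{(\sigma)}_{j,+} < z^{(\sigma)}_{j+1}$. Hence only consecutive intervals meet, no point lies in three intervals, and each overlap has length at least $\tfrac{3\pi}{4}h^{(\sigma)}_j$. It then glues by hand. It sets $\psi^{(\sigma)}_j \equiv 1$ off the overlaps, and on each overlap, identified with $[-l,l]$, it uses $\tfrac12 \mp f(x/l)$ for a fixed odd monotone profile $f$. The derivative bound follows at once from rescaling by $l \gtrsim h^{(\sigma)}_j$.

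\textbf{Your route.} You normalize rescaled bumps, $\chi_j/\sum_k \chi_k$. This needs only two facts:
\begin{enumerate}
\item the cores $\{|z - z^{(\sigma)}_j| \le \tfrac{11}{10}\pi h^{(\sigma)}_j\}$ cover $\mb{R}$, which holds once consecutive height ratios are at least $9/11$ under either indexing convention;
\item a bounded-overlap count.
\end{enumerate}
The price is a Fa\`a di Bruno estimate for $1/S$ with $S \ge 1$.

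\textbf{Comparison.} The paper's version has exactly two-fold overlap and needs no quotient estimates, but it depends on precise endpoint bookkeeping. Yours is the generic, more robust construction and would survive changes to the constants $2\pi$ and $\tfrac32\pi$.

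\textbf{One step to tighten.} Your overlap count (``comparability of heights again'') is terse. The cleanest justification is the paper's chain $z^{(\sigma)}_j + \tfrac32\pi h^{(\sigma)}_j < z^{(\sigma)}_{j+1} < z^{(\sigma)}_{j+2} - \tfrac32\pi h^{(\sigma)}_{j+2}$. Combined with monotonicity of the support endpoints in $k$, it shows that only $k \in \{j-1, j, j+1\}$ meet $I^{(\sigma)''}_j$. Both facts follow from consecutive comparability alone.

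You are also right that the interval in the statement should read $[z^{(\sigma)}_j - \tfrac32\pi h^{(\sigma)}_j,\, z^{(\sigma)}_j + \tfrac32\pi h^{(\sigma)}_j]$. The paper's proof implicitly uses this corrected form.
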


\begin{proof}
First we show that for $\ol{\lambda}$ sufficiently small, the intervals $I^{(\sigma)''}_{j}$ are a covering of $\mb{R}$ and that  $I^{(\sigma)''}_{j}$ and $I^{(\sigma)''}_{k}$ intersect non-trivially if and only if $|j - k| \leq 1$. We first compare the right endpoint $z^{(\sigma)}_{j, +} : = z^{(\sigma)}_{j} + \frac{3 \pi}{2} h_j^{(\sigma)}$ of $I^{(\sigma)''}_{j}$ with the left endpoint $z^{(\sigma)}_{j + 1, -}: = z^{(\sigma)}_{j + 1}  - \frac{3 \pi } {2} h^{(\sigma)}_{j + 1}$ of $I^{(\sigma)''}_{j + 1}$: We have
\begin{align*}
z^{(\sigma)}_{j, +} - z^{(\sigma)}_{j + 1, -} & = z^{(\sigma)}_{j} + \frac{3 \pi}{2} h_j^{(\sigma)} - z^{(\sigma)}_{j + 1}  + \frac{3 \pi } {2} h^{(\sigma)}_{j + 1}\\
 & = z^{(\sigma)}_{j} + \frac{3 \pi}{2} h_j^{(\sigma)} - z^{(\sigma)}_{j} - 2 \pi h^{(\sigma)}_{j}  + \frac{3 \pi } {2} h^{(\sigma)}_{j + 1}  \\
& = - \frac{\pi}{2} h_j^{(\sigma)} + \frac{3 \pi } {2} h^{(\sigma)}_{j + 1}  \\
& =  \pi h_j^{(\sigma)} + \frac{3 \pi } {2} \left(\frac{h^{(\sigma)}_{j + 1}}{h^{(\sigma)}_{j}}- 1\right) h^{(\sigma)}_j
\end{align*}
As in the proof of Corollary \ref{Prop:DomainIntersection} we can take $\ol{\lambda}$ sufficiently small so that  the ratio
\[
\left|\frac{h^{(\sigma)}_{j + 1}}{h^{(\sigma)}_{j}}- 1\right| \leq \frac{1}{6},
\]
which then gives
\begin{align}\label{IntersectionLength}
z^{(\sigma)}_{j, +} - z^{(\sigma)}_{j + 1, -} > \frac{3\pi}{4} h_j^{(\sigma)}.
\end{align}
We similarly  have 
\begin{align*}
z^{(\sigma)}_{j, +} - z^{(\sigma)}_{j + 1} & =  z^{(\sigma)}_{j} + \frac{3 \pi}{2} h_j^{(\sigma)} - z^{(\sigma)}_{j + 1}   \\
& = z_{j}^{(\sigma)}   + \frac{3 \pi}{2} h_j^{(\sigma)} - z_j^{(\sigma)} - 2\pi h^{(\sigma)}_{j} \\
& = - \frac{\pi}{2} h_{j}^{(\sigma)},
\end{align*}
and
\begin{align*}
z^{(\sigma)}_{j + 1, -} - z_{j} & =  \left(z^{(\sigma)}_{j + 1} - \frac{3 \pi }{2} h^{(\sigma)}_{j + 1} \right) - z^{(\sigma)}_{j} \\
& =  \left( z^{(\sigma)}_{j} + 2 \pi h^{(\sigma)}_{j} - \frac{3 \pi}{2} h^{(\sigma)}_{j + 1} \right) - z^{(\sigma)}_{j} \\
& = \frac{\pi}{2} h^{(\sigma)}_j - \frac{3 \pi}{2}\left(\frac{h^{(\sigma)}_{j + 1}}{h^{(\sigma)}_{j}} - 1\right)h^{(\sigma)}_{j}
\end{align*}
and thus as above
\[
z^{(\sigma)}_{j + 1, -} - z^{(\sigma)}_{j} > \frac{\pi}{4} h_j^{(\sigma)}.
\]
Thus we have the inequalities
\[
z^{(\sigma)}_{j} < z^{(\sigma)}_{j + 1, -} < z^{(\sigma)}_{j , +} < z^{(\sigma)}_{j + 1}.
\]
In particular, this implies that the intervals $I^{(\sigma)}_{j}$ and  $I^{(\sigma)}_{k}$ are disjoint unless $k = j - 1, j, j + 1$, and it holds that 
\[
I^{(\sigma)}_{j} \cap I^{(\sigma)}_{j + 1} = \left[z^{(\sigma)}_{j + 1, -}, z^{(\sigma)}_{j, +}\right].
\]
We can then define $\psi^{(\sigma)}_{j}$ in the usual way. Away from the intersections  $I^{(\sigma)}_{j} \cap I^{(\sigma)}_{j \pm 1}$ we take $\psi^{(\sigma)}_j \equiv 1$ and on $I^{(\sigma)}_{j} \cap I^{(\sigma)}_{j + 1}$  we define $\psi^{(\sigma)}_j$ as follows:  Fix a smooth monotonic odd function $f: \mb{R}\rightarrow \mb{R}$ such that $f(x) \equiv 1/2$ for $x > \frac{1}{2}$ and  identify the interval  $I^{(\sigma)}_{j} \cap I^{(\sigma)}_{j + 1}$ with $[-l, l]$ for $l > 0$. By (\ref{IntersectionLength}) we have $\ell > c h_j^{(\sigma)}$ for fixed $c > 0$.  For $x \in [-l, l] $ we then take
\[
\psi^{(\sigma)}_{j} (x) : = 1/2 - f(x/\ell)
\]
Similarly, for $x \in [-l, l]$ we take
\[
\psi^{(\sigma)}_{j + 1} (x) =  1/2 + f(x/\ell).
\]
Clearly the functions $\psi^{(\sigma)}_j$ constitute a partition of unity of $\mb{R}$ satisfying the derivative estimate in the statement of the Lemma. 
\end{proof}

\begin{lemma} \label{Prop:CProjectors}
It holds that  
\[
\wt{\mc{L}} \left(\cos(z)\cosh (s)\right) = 2 \cos (z) \cosh^{-1}(s)
\]
and
\[
\wt{\mc{L}} \left(\sin(z)\cosh (s) \right)= 2 \sin (z) \cosh^{-1}(s).
\]
\end{lemma}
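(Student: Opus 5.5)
This is a direct computation. The operator $\wt{\mc{L}} = \partial_{ss} + \partial_{zz} + 2\cosh^{-2}(s)$ acts on a product $g(z)h(s)$ by
\[
\wt{\mc{L}}(g h) = g'' h + g\left(h'' + 2\cosh^{-2}(s) h\right).
\]
We therefore compute the $s$-part and the $z$-part separately and then combine them.

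For the $s$-part we take $h = \cosh(s)$. Then $h'' = \cosh(s)$ and $2\cosh^{-2}(s)\cosh(s) = 2\cosh^{-1}(s)$, so
\[
h'' + 2\cosh^{-2}(s) h = \cosh(s) + 2\cosh^{-1}(s).
\]

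For the $z$-part we take $g = \cos(z)$ (respectively $\sin(z)$). Then $g'' = -g$, so $g'' h = -g\cosh(s)$.

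Adding the two contributions, the $\cosh(s)$ terms cancel:
\[
\wt{\mc{L}}(g\cosh(s)) = -g\cosh(s) + g\cosh(s) + 2g\cosh^{-1}(s) = 2g\cosh^{-1}(s).
\]
This gives both identities at once. Equivalently, the frequency-one dependence in $z$ supplies an eigenvalue $-1$ that cancels the $+1$ coming from $\partial_{ss}\cosh = \cosh$.

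There is no real obstacle here. The only point needing care is that the sign convention matches the paper's, with $\wt{\mc{L}} = \Delta + 2\cosh^{-2}(s)$ as stated in the introduction, so that $g''=-g$ is what produces the cancellation. It is this cancellation that makes $\cos(z)\cosh(s)$ and $\sin(z)\cosh(s)$ the right building blocks for prescribing $\cosh^{-1}(s)$ density. They play the role that $f$ from Lemma \ref{Prop:TProjectors} plays for $\tanh(s)$, and their growth like $\cosh(s)$ accounts for the $\cosh(s)$ weight in Proposition \ref{Prop:CoshProjectors}.
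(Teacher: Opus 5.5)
Your proposal is correct and takes the same approach as the paper, which dismisses the lemma as ``an easy computation.'' Your direct calculation is exactly that computation: the $-\cos(z)\cosh(s)$ from $\partial_{zz}$ cancels the $+\cos(z)\cosh(s)$ from $\partial_{ss}$, leaving $2\cos(z)\cosh^{-1}(s)$, and likewise for $\sin(z)$.
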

\begin{proof}
This is an easy computation.
\end{proof}

\begin{definition}  \label{Def:CAJBJs}
With  $E(s, z) = e (z)\cosh^{-1}(s)$ and $E_j : = \psi^{(\sigma)}_j E : = e_j (z)\cosh^{-1}(s) $, we set
\[
a_j : = \int_{\Lambda^{(\sigma)''}_j} E_j(s, z) \cos(z)\cosh^{-1}(s), \quad b_{j} : =  \int_{\Lambda^{(\sigma)''}_j} E_j(s, z) \sin(z) \cosh^{-1}(s).
\]
where $\Lambda^{(\sigma)''}_j : = \Lambda \cap \left(I_j^{(\sigma)''} \times \mb{R}\right)$
\end{definition}

\begin{lemma} \label{Prop:CAJBJs}
It holds that  
\[
|a_j|, |b_j|\leq^C \beta \omega^{(\sigma)}_j h^{(\sigma)}_j,
\]
where above we have set $\omega_j^{(\sigma)} : = \omega\left(z^{(\sigma)}_j \right)$.
\end{lemma}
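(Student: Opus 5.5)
This bound is a direct size estimate, in the same way as Lemma \ref{Prop:AJBJs}; the only changes are that the slab now has height of order $h^{(\sigma)}_j$ rather than a fixed height, and the $s$-integral converges with no factor of $\ell$. I will treat $a_j$; the bound for $b_j$ is identical with $\cos(z)$ replaced by $\sin(z)$.

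First, write the integrand explicitly. Since $E_j = e_j(z)\cosh^{-1}(s)$ with $e_j = \psi^{(\sigma)}_j e$, Fubini gives
\[
a_j = \int_{I^{(\sigma)''}_j} e_j(z)\cos(z)\left(\int_{-\ell(z)}^{\ell(z)} \cosh^{-2}(s)\,ds\right) dz .
\]
Next, bound the inner integral uniformly in $z$. Because $\cosh^{-2}$ is integrable on $\mb{R}$, we have $\int_{-\ell(z)}^{\ell(z)}\cosh^{-2}(s)\,ds \le \int_{\mb{R}}\cosh^{-2}(s)\,ds = 2$. No factor of $\ell$ appears, in contrast with the $\tanh$ case.

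Then bound $e_j$ on the support. Since $0\le\psi^{(\sigma)}_j\le 1$, we have $|e_j(z)| \le |e(z)| \le \beta\omega(z)$. Every $z \in I^{(\sigma)''}_j$ satisfies $|z - z^{(\sigma)}_j| \le \tfrac{3\pi}{2}h^{(\sigma)}_j$.

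The point needing care is replacing $\omega(z)$ by $\omega^{(\sigma)}_j$ on this interval. By Lemma \ref{Prop:EllBounds} with $\eta = 3/4$ and $\ol{\lambda}$ small, $h^{(\sigma)}_j = \ell_j^{\sigma}$ is much smaller than $\lambda^{-3\epsilon_0/4}(z^{(\sigma)}_j)$. Hence the whole interval lies in the window where (\ref{WeightFunctionCondition}) applies, which gives $\omega(z) \le (1+\rho_0)\omega^{(\sigma)}_j \le 2\omega^{(\sigma)}_j$ there. This is the comparability already noted in Remark \ref{EllComparable}.

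Finally, combine the three bounds: $|a_j| \le 2\cdot 2\beta\omega^{(\sigma)}_j\cdot|I^{(\sigma)''}_j| = 12\pi\,\beta\omega^{(\sigma)}_j h^{(\sigma)}_j$, which is the claim.
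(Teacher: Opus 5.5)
Your proof is correct and follows the same route as the paper. Both arguments bound $|e_j|$ by its supremum, bound the $s$-integral of $\cosh^{-2}(s)$ by a constant with no factor of $\ell$, and bound the length of the $z$-interval by $C h^{(\sigma)}_j$. The one difference is that you explicitly justify $\omega(z)\le 2\omega^{(\sigma)}_j$ on $I^{(\sigma)''}_j$ using Lemma \ref{Prop:EllBounds} and (\ref{WeightFunctionCondition}), whereas the paper uses this comparability silently when it replaces $\|e_j\|_{0,\alpha}$ by $\beta\omega^{(\sigma)}_j$.
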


\begin{proof}
We have
\begin{align*}
a_j  & = \int_{\Lambda^{(\sigma)''}_j}  E_j(s, z) \cosh^{-1} (s) \\
& \leq^C \| e_j\|_{0,  \alpha} \left|I^{(\sigma)}_j\right| \int_{\ol{\ell}_j}^{\ol{\ell}_j} \cosh^{-2} (s)ds \\
& \leq^C \beta\omega^{(\sigma)}_j h^{(\sigma)}_j. \\
\end{align*}
The estimate for $b_j$ follows from similar reasoning. 
\end{proof}

\begin{lemma} \label{Prop:CAJBJs2}
There is a function $\wh{\Theta}_j: \mb{R}^2 \rightarrow \mb{R}$  supported on $I^{(\sigma)'}_{j} \times \mb{R}$ satisfying the estimates
\[
\left\|\wh{\Theta}_j \right\|_{2, \alpha} \leq^C \beta \omega^{(\sigma)}_j\cosh(s), \quad \left\|\wt{\mc{L}}\wh{\Theta}_j \right\|_{0, \alpha} \leq^C \beta \omega^{(\sigma)}_j \left(\cosh^{-1}(s) +   \left(h_j^{(\sigma)}\right)^{-1}\cosh(s) \right)
\]
and such that 
\[
\int_{\Lambda_j^{(\sigma)}} \left(\wt{\mc{L}} \wh{\Theta}_j\right) \sin(z) \cosh^{-1}(s) = a_j, \quad \int_{\Lambda_j^{(\sigma)}}\left( \wt{\mc{L}} \wh{\Theta}_j\right) \cos(z) \cosh^{-1}(s) = b_j
\]
\end{lemma}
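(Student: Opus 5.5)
We follow the proof of Lemma \ref{Prop:TAJBJs}, with $\varphi_j$ replaced by $\varphi^{(\sigma)}_j$ and $f$ replaced by the functions of Lemma \ref{Prop:CProjectors}. Set
\[
\wh{\Theta}_j = A_j \varphi^{(\sigma)}_j \sin(z)\cosh(s) + B_j \varphi^{(\sigma)}_j \cos(z)\cosh(s),
\]
with constants $A_j,B_j$ to be chosen. By Lemma \ref{Prop:CProjectors},
\[
\wt{\mc{L}}\wh{\Theta}_j = 2\varphi^{(\sigma)}_j\bigl(A_j\sin z + B_j \cos z\bigr)\cosh^{-1}(s) + \Bigl( 2\bigl(\varphi^{(\sigma)}_j\bigr)'\bigl(A_j \cos z - B_j\sin z\bigr) + \bigl(\varphi^{(\sigma)}_j\bigr)''\bigl(A_j\sin z + B_j\cos z\bigr)\Bigr)\cosh(s).
\]
By (\ref{VarphiSigmaEst}), the second group is bounded by $C(|A_j|+|B_j|)(h^{(\sigma)}_j)^{-1}\cosh(s)$, and the same bound holds in $C^{0,\alpha}$. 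Both estimates in the statement therefore follow once we show $|A_j|+|B_j|\le C\beta\omega^{(\sigma)}_j$. The support claim is immediate because $\varphi^{(\sigma)}_j$ is supported in $I^{(\sigma)'}_j$.

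To choose $A_j$ and $B_j$ we first work on the rectangle $R^{(\sigma)}_j$, where the $s$-interval $[-\ol{\ell}^{(\sigma)}_j,\ol{\ell}^{(\sigma)}_j]$ does not depend on $z$. The quantities $\int \wt{\mc{L}}\wh{\Theta}_j \sin(z)\cosh^{-1}(s)$ and $\int \wt{\mc{L}}\wh{\Theta}_j \cos(z)\cosh^{-1}(s)$ are then linear in $(A_j,B_j)$ with a $2\times 2$ matrix $M$. The $\cosh^{-1}$ part contributes roughly $2\int\varphi^{(\sigma)}_j\sin^2 z$ times $\int\cosh^{-2}$, which is of order $h^{(\sigma)}_j$ on the diagonal. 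For the cosh part, $\cosh(s)\cosh^{-1}(s)=1$, so its $s$-integral is $2\ol{\ell}$. In $z$, integrating by parts gives $\int(\varphi'' \sin z+2\varphi'\cos z)\sin z = -\int \varphi\,\sin^2 z\cdot(\ldots)$ type terms. More precisely, $\varphi''\sin z+2\varphi'\cos z = (\varphi\sin z)'' + \varphi\sin z$, so the integral against $\sin z$ equals $\int(\varphi\sin z)\sin z'' + \int\varphi\sin^2 z = 0$. The cosh part therefore integrates to zero exactly, which is the analogue of Lemma \ref{Prop:TProjectors}. The same computation makes the off-diagonal entries small, of order $O(1)$ from $\int\varphi\sin z\cos z$. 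Hence $M = c\,h^{(\sigma)}_j(\mathrm{Id}+O((h^{(\sigma)}_j)^{-1}))$ is invertible with $|M^{-1}|\le C(h^{(\sigma)}_j)^{-1}$. Combined with Lemma \ref{Prop:CAJBJs}, this gives $|A_j|+|B_j|\le C\beta\omega^{(\sigma)}_j$. The statement pairs $a_j$ with $\sin$ and $b_j$ with $\cos$; the matching of $A_j,B_j$ is adjusted accordingly.

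The main obstacle is passing from $R^{(\sigma)}_j$ to the true domain $\Lambda^{(\sigma)}_j$. Unlike the $\tanh$ case, the integrand $\cosh(s)\cosh^{-1}(s)=1$ does not decay in $s$. The error region $R^{(\sigma)}_j\setminus\Lambda^{(\sigma)}_j$ has, at each $z$, width $|\ol{\ell}^{(\sigma)}_j-\ell(z)|\le C h^{(\sigma)}_j\lambda_j^{(1-\tau)\epsilon_0}$, by (\ref{EllDIFFIN}) over an interval of length $h^{(\sigma)}_j$. On this region $|\wt{\mc{L}}\wh{\Theta}_j\cosh^{-1}(s)|\le C(|A_j|+|B_j|)(h^{(\sigma)}_j)^{-1}$, up to the negligible $\cosh^{-2}$ term. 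Integrating over $z$ in an interval of length $Ch^{(\sigma)}_j$, the defects $a_j'$ and $b_j'$ are bounded by $C(|A_j|+|B_j|)h^{(\sigma)}_j\lambda_j^{(1-\tau)\epsilon_0}\le C\lambda_j^{(1-\tau)\epsilon_0}(|a_j|+|b_j|)$. By Lemma \ref{Prop:EllBounds} this is at most $\delta(|a_j|+|b_j|)$ for $\ol{\lambda}$ small. As in Lemma \ref{Prop:TAJBJs}, we then correct $(a_j,b_j)$ by $(a_j',b_j')$ and iterate; the geometric series converges and preserves the bound $|A_j|+|B_j|\le C\beta\omega^{(\sigma)}_j$.
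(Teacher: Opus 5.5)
Your proof is correct and follows the paper's route. You use the same ansatz $A_j\varphi^{(\sigma)}_j\sin(z)\cosh(s)+B_j\varphi^{(\sigma)}_j\cos(z)\cosh(s)$, fix the coefficients first on the rectangle $R^{(\sigma)}_j$, and then remove the defect from the thin region $R^{(\sigma)}_j\setminus\Lambda^{(\sigma)}_j$ by iteration.

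The one difference is on the rectangle. Your identity $\varphi''\sin z+2\varphi'\cos z=(\varphi\sin z)''+\varphi\sin z$ with integration by parts shows the $\cosh(s)$-part of $\wt{\mc{L}}\wh{\Theta}_j$ integrates to exactly zero against $\sin(z)\cosh^{-1}(s)$ and $\cos(z)\cosh^{-1}(s)$, so you can invert the $2\times 2$ matrix directly. The paper instead only bounds that contribution by $O(\ol{\ell}^{(\sigma)}_j/h^{(\sigma)}_j)$ relative to $|a_j|+|b_j|$ and iterates it away using $\sigma>2$.
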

\begin{proof}
Set 
\[
A_j : = a_j/2\int_{R^{(\sigma)}_j} \varphi_j^{(\sigma)} \sin^2(z) \cosh^{-2} (s)
\]
 and 
 \[
 B_j : = b_j/2\int_{R^{(\sigma)}_j} \varphi^{(\sigma)}_j \cos^2(z) \cosh^{-2} (s) .
 \]
 and for the moment take $\wh{\Theta}_j$ to be defined by 
\[
\wh{\Theta}_j : = \sum_j A_j \varphi^{(\sigma)}_j \sin(z)\cosh(s) + \sum_j B_j \varphi^{(\sigma)}_j \cos(z)\cosh(s).
\]
We will show that $\wh{\Theta}_j$ is  an approximate solution and that an exact solution can be found by iteration. From Lemma \ref{Prop:CAJBJs} we have
\begin{align}\label{AJBJCOSH}
\left|A_j \right| \leq^C a_j/h^{(\sigma)}_j \leq^C \beta \omega^{(\sigma)}_j, \quad  \left|B_j \right| \leq^C b_j/h^{(\sigma)}_j \leq^C \beta \omega^{(\sigma)}_j,
\end{align}
and thus $\wh{\Theta}_j$ satisfies the bounds in the statement of the Lemma. Moreover, using the  estimate (\ref{VarphiSigmaEst}) for derivatives of  $\varphi^{(\sigma)}_j$ we have
\begin{align} \label{StuffNeededLater}
\wt{\mc{L}}\left(\varphi^{(\sigma)}_j \sin(z) \cosh(s)\right) & = 2 \varphi^{(\sigma)}_j \sin(z) \cosh^{-1} (s)+ \left(\varphi^{(\sigma)}_j\right)'' \sin(z) \cosh(s)  \\ \notag
& \quad + 2 \left(\varphi^{(\sigma)}_j\right)' \cos(z) \cosh(s) \\
& =  2\varphi^{(\sigma)}_j \sin(z) \cosh^{-1}(s) + O\left(\cosh(s)/h^{(\sigma)}_j\right), \notag
\end{align}
and a similar estimate holds for $\wt{\mc{L}}\left(\varphi^{(\sigma)}_j \cos(z) \cosh(s)\right)$. This and the estimate (\ref{AJBJCOSH}) for the coefficients $A_j$ and $B_j$,  together imply the estimate for $\wt{\mc{L}} \wh{\Theta}_j$  and  $\wh{\Theta}_j$  in the statement of the Lemma. Using (\ref{StuffNeededLater}) we have
\begin{align*}
\int_{R_j^{(\sigma)}} A_j \wt{\mc{L}}\left(\varphi^{(\sigma)}_j \sin(z) \cosh(s)\right)  \sin(z) \cosh^{-1}(s) &  = a_j +  A_j \int_{R_j^{(\sigma)}} O\left(\cosh(s)/h^{(\sigma)}_j\right)\sin(z) \cosh^{-1}(s) \\
& = a_j + O\left(a_j \ol{\ell}^{(\sigma)}_j/h^{(\sigma)}_j\right). \\
\end{align*}
Similarly, we have
\begin{align*}
\int_{R_j^{(\sigma)}} B_j \wt{\mc{L}}\left(\varphi^{(\sigma)}_j \cos(z) \cosh(s)\right)  \sin(z) \cosh^{-1}(s) & = 2 B_j\int_{R_j^{(\sigma)}} \varphi_j^{(\sigma)} \cos(z) \sin(z) \cosh^{-2}(s) \\
& \quad +  B_j \int_{R_j^{(\sigma)}}O\left(\cosh(s)/h^{(\sigma)}_j\right)\sin(z) \cosh^{-1}(s) \\
& = 2 B_j \int_{R_j^{(\sigma)}} \varphi_j^{(\sigma)} \cos(z) \sin(z) \cosh^{-2}(s)+  O\left(b_j \ol{\ell}^{(\sigma)}_j/h^{(\sigma)}_j\right).
\end{align*}
We have
\begin{align*}
\int_{R_j^{(\sigma)}} \varphi_j^{(\sigma)} \cos(z) \sin(z)\cosh^{-2}(s) &  \leq^C \int_{I^{(\sigma)}_j} \varphi_j^{(\sigma)}(z) \cos(z) \sin(z) dz  \\
& = -\frac{1}{2} \int_{I^{(\sigma)}_j} \left(\varphi_j^{(\sigma)}\right)'(z) \sin^{2}(z) dz \\
& = O\left(h^{(\sigma)}_j\right)^{-1}\left|I^{(\sigma)}_j \right| \\
& = O(1).
\end{align*}
and thus
\begin{align*}
\int_{R_j^{(\sigma)}} B_j \wt{\mc{L}}\left(\varphi^{(\sigma)}_j \cos(z) \cosh(s)\right)  \sin(z) \cosh^{-1}(s) &  = O (B_j) +   O\left(b_j \ol{\ell}^{(\sigma)}_j/h_j^{(\sigma)}\right) \\
& = O\left(b_j \ol{\ell}^{(\sigma)}_j/h_j^{(\sigma)}\right).
\end{align*}
Combining gives
\[
\int_{R^{(\sigma)}_j}\wt{\mc{L}} \wh{\Theta}_j \sin(z) \cosh^{-1}(s) =  a_j + O \left(\left(|a_j| + |b_j|\right)\ol{\ell}_j^{(\sigma)}/h^{(\sigma)}_j\right) =: a_j + a'_j 
\]
where $a_j'$ is implicitly defined above. Similar reasoning gives
\[
\int_{R^{(\sigma)}_j}\wt{\mc{L}} \wh{\Theta}_j \cos(z) \cosh^{-1}(s) =  b_j + O \left(\left(|a_j| + |b_j|\right)\ol{\ell}_j^{(\sigma)}/h^{(\sigma)}_j\right) = : b_j + b_j'
\]
where again $b_j'$ is implicitly defined above. Taking $\sigma > 2$ and assuming $\ol{\lambda}$ is sufficiently small we can assume  the ratio $\ell^{(\sigma)}_j/h^{(\sigma)}_j = \left(\ell^{(\sigma)}_j\right)^{1 - \sigma}$ to be arbitrarily small. In particular, we can assume that the inequality
\[
|a'_j|, |b'_j| \leq \delta \left(\left|a_j\right|  + \left|b_j\right|\right)
\]
holds for any $\delta> 0$. The process described above can then be iterated to find constants $A_j$ and $B_j$ such that 
\[
\int_{R^{(\sigma)}_j}  \left(\wt{\mc{L}}\wh{\Theta}_j \right)\sin(z) \cosh^{-1}(s)   = a_j,  \quad  \int_{R^{(\sigma)}_j}  \left(\wt{\mc{L}}\wh{\Theta}_j \right)(z - z_j) \cos(z) \cosh^{-1}(s) = b_j.
\]
It remains to estimate the integral over $\Lambda^{(\sigma)}_j$. We will use a similar iteration argument to the one presented above.  We have
\begin{align*}
\int_{\Lambda^{(\sigma)}_j} \left( \wt{\mc{L}}\wh{\Theta}_j\right) \sin(z) \cosh^{-1}(s) & =  \int_{R^{(\sigma)}_j} \left( \wt{\mc{L}}\wh{\Theta}_j\right) \sin(z) \cosh^{-1}(s) - \int_{R^{(\sigma)}_j \setminus \Lambda^{(\sigma)}_j} \left( \wt{\mc{L}}\wh{\Theta}_j\right) \sin(z) \cosh^{-1}(s) \\
& = a_j -  \int_{R^{(\sigma)}_j \setminus \Lambda^{(\sigma)}_j} \left( \wt{\mc{L}}\wh{\Theta}_j\right) \sin(z) \cosh^{-1}(s) \\
& = :  a_j + a'_j
\end{align*}
where $a'_j$ is determined implicitly above. Using the derivative estimate (\ref{EllDIFFIN}) for $\ell$ we can estimate  $a_j'$ above by:
\begin{align*}
 a_j' & = \int_{I^{(\sigma)}_j} \int_{\ell}^{\ol{\ell}^{(\sigma)}_j} \left( \wt{\mc{L}}\wh{\Theta}_j\right) \cos(z) \cosh^{-1}(s) ds dz \\
 & \leq \sup\left| \left( \wt{\mc{L}}\wh{\Theta}_j\right)  \cosh^{-1}(s) \right| \left| I^{(\sigma)}_j\right| \left|  \ell^{(\sigma)}_j - \ell(z)\right| \\
 & \leq^C  \left(|A_j| + |B_j| \right)\left( 1 + \left(h^{(\sigma)}_{j}\right)^{-1}\right)  \left(\lambda^{(\sigma)}_j\right)^{(1 - \tau) \epsilon_0} \left(h^{(\sigma)}_{j}\right)^{2} \\
 & \leq^C \left(|a_j|/h_j^{(\sigma)} + |b_j|/h_{j}^{(\sigma)} \right) \left(\lambda^{(\sigma)}_j\right)^{(1 - \tau ) \epsilon_0} \left(h^{(\sigma)}_{j}\right)^{2}\\
 &  \leq^C \left(|a_j| + |b_j| \right)  \left(\lambda^{(\sigma)}_j\right)^{(1 - \tau) \epsilon_0}\left(h^{(\sigma)}_{j}\right) \\
 & \leq^C \left(|a_j| + |b_j| \right)  \left(\lambda^{(\sigma)}_j\right)^{(1 - \tau) \epsilon_0}\left(\ell^{(\sigma)}_{j}\right)^{\sigma}
\end{align*}
Assuming $\ol{\lambda}$ sufficiently small we have 
\[
a'_j \leq \delta \left(|a_j|  + |b_j|\right).
\]
Similarly defining $b'_j$ by
\[
b_j' : = - \int_{R^{(\sigma)}_j \setminus \Lambda^{(\sigma)}_j} \left( \wt{\mc{L}}\wh{\Theta}_j\right) \cos(z) \cosh^{-1}(s)
\]
we have the estimate
\[
b'_j \leq \delta \left(|a_j|  + |b_j|\right).
\]
As before, we can thus iterate away the error terms to produce an exact solution $\wh{\Theta}_j$ as in the statement of the Lemma. 

\end{proof}

\begin{definition} \label{Def:CHAtSolution}
We set
\[
\wh{E}_j : = E - \mc{\wt{L}}\left(\wh{\Theta}_j\right).
\]
\end{definition}

\begin{lemma} \label{Prop:CHatSolution}
The following statements hold:
\begin{enumerate}
\item  \label{Prop:CHatSolution2} $\left\| \wh{E}_j\right\|_{0, \alpha}\leq^C\beta  \omega^{(\sigma)}_j \left(\cosh^{-1}(s) +\left(h^{(\sigma)}_j\right)^{- 1}\cosh(s)\right)$. \\
\item  \label{Prop:CHatSolution3} $\int_{\Lambda^{(\sigma)}_j} \wh{E}_j \sin(z) \cosh^{-1}(s) = \int_{\Lambda^{(\sigma)}_j} \wh{E}_j  \cos(z) \cosh^{-1}(s) = 0.$ \\
\end{enumerate}
\end{lemma}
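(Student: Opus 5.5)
Both statements follow directly from Lemma \ref{Prop:CAJBJs2} and the definition of $\wh{E}_j$. Here, as in the $\tanh(s)$ case (Definition \ref{Def:HAtSolution}), the source in Definition \ref{Def:CHAtSolution} should be read as the localized term $E_j = \psi^{(\sigma)}_j E = e_j(z)\cosh^{-1}(s)$, so that $\wh{E}_j = E_j - \wt{\mc{L}}\wh{\Theta}_j$.

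For (\ref{Prop:CHatSolution2}), first I bound $E_j$. Since $\|e\|_{0,\alpha} \leq \beta\omega$ and $\psi^{(\sigma)}_j$ has derivatives bounded by Lemma \ref{Prop:PartitionSigma}, with $h^{(\sigma)}_j \geq 1$, we get $\|e_j\|_{0,\alpha} \leq^C \beta \omega$ on $I^{(\sigma)''}_j$. By (\ref{WeightFunctionCondition}) together with Lemma \ref{Prop:EllBounds}, the interval $I^{(\sigma)''}_j$ has length of order $h^{(\sigma)}_j = (\ell^{(\sigma)}_j)^\sigma \leq \lambda^{-3\epsilon_0/4}$, so $\omega$ is comparable to $\omega^{(\sigma)}_j$ there (Remark \ref{EllComparable}). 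The local H\"older norm of $\cosh^{-1}(s)$ on a unit disk is comparable to $\cosh^{-1}(s)$ itself. Hence
\[
\|E_j\|_{0,\alpha} \leq^C \beta\omega^{(\sigma)}_j \cosh^{-1}(s).
\]
Next, Lemma \ref{Prop:CAJBJs2} gives $\|\wt{\mc{L}}\wh{\Theta}_j\|_{0,\alpha} \leq^C \beta\omega^{(\sigma)}_j\bigl(\cosh^{-1}(s) + (h^{(\sigma)}_j)^{-1}\cosh(s)\bigr)$. The triangle inequality then yields (\ref{Prop:CHatSolution2}).

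For (\ref{Prop:CHatSolution3}), I would use linearity of the integral:
\[
\int_{\Lambda^{(\sigma)}_j}\wh{E}_j \sin(z)\cosh^{-1}(s) = \int_{\Lambda^{(\sigma)}_j} E_j\sin(z)\cosh^{-1}(s) - \int_{\Lambda^{(\sigma)}_j}(\wt{\mc{L}}\wh{\Theta}_j)\sin(z)\cosh^{-1}(s).
\]
The second integral was prescribed in Lemma \ref{Prop:CAJBJs2} to equal the corresponding density moment of $E_j$, and the cosine moment is handled identically, so both moments vanish.

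The main obstacle is bookkeeping rather than analysis, in two places. First, Definition \ref{Def:CAJBJs} integrates over $\Lambda^{(\sigma)''}_j$, while Lemma \ref{Prop:CAJBJs2} and the present statement integrate over $\Lambda^{(\sigma)}_j$. Second, the roles of $\sin$ and $\cos$ (that is, $a_j$ versus $b_j$) are swapped between the definition and the lemma. To resolve this, I would define $a_j,b_j$ as the $\sin(z)\cosh^{-1}(s)$ and $\cos(z)\cosh^{-1}(s)$ moments of $E_j$ over the same domain used in Lemma \ref{Prop:CAJBJs2}, with matching trigonometric factors. The estimates of Lemma \ref{Prop:CAJBJs} are unaffected, because that domain is contained in $\Lambda^{(\sigma)''}_j$, whose height is comparable to $h^{(\sigma)}_j$. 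If instead the moments are to vanish over $\Lambda^{(\sigma)''}_j$, the same iteration in Lemma \ref{Prop:CAJBJs2} applies verbatim with $R^{(\sigma)}_j$ replaced by the rectangle over $I^{(\sigma)''}_j$. In that case $\wh{\Theta}_j$ is still supported in $I^{(\sigma)'}_j \times \mb{R}$, so the $\wt{\mc{L}}\wh{\Theta}_j$ integrals over the two domains agree.
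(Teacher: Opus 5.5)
Your proposal is correct and uses the same argument as the paper, which simply cites the definition of $\wh{E}_j$ and Lemma \ref{Prop:CAJBJs2}: the triangle inequality gives (1) and linearity gives (2). You are right that $\wh{E}_j$ must be read with $E_j=\psi^{(\sigma)}_j E$ rather than $E$, and your diagnosis of the $\sin/\cos$ swap and of the $\Lambda^{(\sigma)}_j$ versus $\Lambda^{(\sigma)''}_j$ mismatch is accurate; note that the proof of Lemma \ref{Prop:CBarSolutions} actually needs your second fix, where the moments vanish over $\Lambda^{(\sigma)''}_j$, the full $z$-support of $\wh{E}_j$.
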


\begin{proof}
Both   (\ref{Prop:CHatSolution2}) are (\ref{Prop:CHatSolution3}) immediate consequences of the definition of  $\wh{E}_j$  and Lemma \ref{Prop:CAJBJs2}. 
\end{proof}

\begin{definition} \label{Def:CBarSolutions}
We set
\begin{enumerate}
\item  \label{Def:CBarSolutions1}
\[
\theta_j(z) : = \int_{-\ell(z)}^{\ell(z)} \wh{E}_j(s, z)  \cosh^{-1}(s)ds/\int_{-\ell^{(\sigma)}_j}^{\ell^{(\sigma)}_j}\cosh^{-2}(s) ds.
\]
\item  \label{Def:CBarSolutions2}
\begin{align*}
\ol{\Theta}_j(z)  & = -\cos(z) \int_{-\infty}^{z} \theta_j(z')  \sin(z') dz' + \sin(z) \int_{-\infty}^{z} \theta_j(z') \cos(z') dz'.
\end{align*}

\end{enumerate}
\end{definition}

\begin{lemma} \label{Prop:CBarSolutions}
The functions $\theta_j$ and $\ol{\Theta}_j$ are supported on $I^{(\sigma)''}_j$ and it holds that:
\begin{enumerate}
\item \label{Prop:CBarSolutions1} $\left\|\theta_j\right\|_{0, \alpha} \leq^C \beta \omega^{(\sigma)}_j$.  \\
\item \label{Prop:CBarSolutions2} $\left\|\ol{\Theta}_j\right\|_{2, \alpha} \leq^C \beta h_j^{(\sigma)}\omega^{(\sigma)}_j$.
\end{enumerate}
\end{lemma}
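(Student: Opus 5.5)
I would follow the same template as the proof of Lemma \ref{Prop:BarSolutions}, adapting it to the oscillatory kernel $\cos(z), \sin(z)$ in place of $1, (z - z_j)$.

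\textbf{Step 1: support and estimate for $\theta_j$.} Since $\wh{E}_j = E_j - \wt{\mc{L}}\wh{\Theta}_j$, and both $E_j = \psi^{(\sigma)}_j E$ and $\wh{\Theta}_j$ are supported in $I^{(\sigma)''}_j \times \mb{R}$, the function $\theta_j$ is supported in $I^{(\sigma)''}_j$. For the bound, integrate the pointwise estimate of Lemma \ref{Prop:CHatSolution} (\ref{Prop:CHatSolution2}) against $\cosh^{-1}(s)$ over $|s| \leq \ell(z)$:
\[
\int_{-\ell}^{\ell}\left(\cosh^{-1}(s) + \left(h^{(\sigma)}_j\right)^{-1}\cosh(s)\right)\cosh^{-1}(s)\, ds \leq^C 1 + \ell\left(h^{(\sigma)}_j\right)^{-1} \leq^C 1.
\]
This uses $\ell/h^{(\sigma)}_j = \ell^{1-\sigma}$ together with Remark \ref{EllComparable}, which makes $\ell(z)$ comparable to $\ell^{(\sigma)}_j$ on $I^{(\sigma)''}_j$. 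The denominator $\int_{-\ell^{(\sigma)}_j}^{\ell^{(\sigma)}_j} \cosh^{-2}(s)\,ds$ is bounded below by a positive constant. The H\"older seminorm in $z$ is handled the same way: $\ell(z)$ is Lipschitz with small constant by (\ref{EllDIFFIN}), and the variation of the integration endpoints contributes only terms bounded by $\cosh^{-1}(\ell)\left\|\wh{E}_j\right\|$ near $s = \pm\ell$, which are $O(\beta\omega^{(\sigma)}_j)$. Finally, replace $\omega(z)$ by $\omega^{(\sigma)}_j$ using (\ref{WeightFunctionCondition}). This gives (\ref{Prop:CBarSolutions1}).

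\textbf{Step 2: support of $\ol{\Theta}_j$.} By variation of parameters, $\ol{\Theta}_j'' + \ol{\Theta}_j = \theta_j$, and $\ol{\Theta}_j$ vanishes for $z$ below $I^{(\sigma)''}_j$. For $z$ above $I^{(\sigma)''}_j$, we have $\ol{\Theta}_j = -\cos(z)\int \theta_j \sin + \sin(z)\int \theta_j\cos$. By Fubini and the definition of $\theta_j$,
\[
\int \theta_j \sin = c_j^{-1}\int_{\Lambda^{(\sigma)''}_j} \wh{E}_j \sin(z)\cosh^{-1}(s),
\]
and this vanishes by Lemma \ref{Prop:CHatSolution} (\ref{Prop:CHatSolution3}); likewise for $\cos$. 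The main subtle point is here: the orthogonality in Lemma \ref{Prop:CHatSolution} is stated over $\Lambda^{(\sigma)}_j$, while the support of $\wh{E}_j$ is $\Lambda^{(\sigma)''}_j$. I would check that the construction in Lemma \ref{Prop:CAJBJs2} really produces orthogonality over the full support, which is what Definition \ref{Def:CAJBJs} uses, treating the discrepancy in notation as a typo. Granting this, $\ol{\Theta}_j$ is supported on $I^{(\sigma)''}_j$.

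\textbf{Step 3: estimate for $\ol{\Theta}_j$.} For $z \in I^{(\sigma)''}_j$, the integrals run over an interval of length at most $3\pi h^{(\sigma)}_j$. This gives $|\ol{\Theta}_j| + |\ol{\Theta}_j'| \leq^C h^{(\sigma)}_j\sup|\theta_j|$. For the second derivative we use $\ol{\Theta}_j'' = \theta_j - \ol{\Theta}_j$, and the local H\"older norm of $\ol{\Theta}_j''$ is controlled by $\|\theta_j\|_{0,\alpha} + \|\ol{\Theta}_j\|_{0,\alpha}$. Combining these with Step 1 yields $\left\|\ol{\Theta}_j\right\|_{2,\alpha} \leq^C \beta h^{(\sigma)}_j\omega^{(\sigma)}_j$. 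The factor $h^{(\sigma)}_j$, which is unavoidable because of the non-decaying integration over a long interval, is exactly what the statement allows.
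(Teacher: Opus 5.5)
Your proposal is correct and follows the paper's proof step for step: you bound $\theta_j$ by integrating the estimate of Lemma \ref{Prop:CHatSolution} against $\cosh^{-1}(s)$, you get the support of $\ol{\Theta}_j$ from the vanishing $\sin(z)$ and $\cos(z)$ moments in the variation-of-parameters formula, and you bound $\ol{\Theta}_j$ by the $O(h^{(\sigma)}_j)$ length of $I^{(\sigma)''}_j$; your use of $\ol{\Theta}_j'' = \theta_j - \ol{\Theta}_j$ for the top-order H\"older bound only fills in a detail the paper skips. The discrepancy you flag is harmless, since $\wh{\Theta}_j$ is supported in $I^{(\sigma)'}_j \times \mb{R}$, which lies in both $I^{(\sigma)}_j$ and $I^{(\sigma)''}_j$; so the moments of $\wt{\mc{L}}\wh{\Theta}_j$ over $\Lambda^{(\sigma)}_j$ and $\Lambda^{(\sigma)''}_j$ coincide, they match the moments $a_j, b_j$ of $E_j$ over $\Lambda^{(\sigma)''}_j$, and the orthogonality holds over the full support of $\wh{E}_j$.
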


\begin{proof}
For  (\ref{Prop:CBarSolutions1}) we have from Lemma \ref{Prop:CHatSolution} (\ref{Prop:CHatSolution2}) that
\begin{align*}
\left\|\theta_j\right\|_{0, \alpha} \leq^C & \beta \omega^{(\sigma)}_j  \left(\int_{- \ell (z)}^{\ell(z)}  \left(\cosh^{-1}(s) + \cosh(s)/h^{(\sigma)}_j\right)  \cosh^{-1}(s)ds\right) \\
& \leq^C  \beta \omega^{(\sigma)}_j \left(1 + \ell(z)/h^{(\sigma)}_j\right) \\
& \leq^C \beta \omega^{(\sigma)}_j
\end{align*}
Moreover, since $\wh{E}_j$ is supported on $I^{(\sigma)''}_j \times \mb{R}$ it follows directly that $\theta_j$ is supported on $I_j''$. Observe that since $\wh{E}_j$ is supported on $\Lambda^{(\sigma)''}_j$  it follows that  $\ol{\Theta}_j(z)$ vanishes for $z < z^{(\sigma)}_j - \frac{3 \pi}{2} h_j^{(\sigma)} $. Moreover, the orthogonality condition in Lemma \ref{Prop:CHatSolution} (\ref{Prop:CHatSolution3}) on $\wh{E}_j$ gives  for $z > z^{(\sigma)}_j + \frac{3 \pi}{2} h_j^{(\sigma)}$ that
\begin{align*}
\ol{\Theta}_j\left(z \right)  & = - \cos(z) \int_{- \infty}^{z} \theta_j (z')  \sin(z') dz'  +   \sin (z) \int_{-\infty}^{ z }  \theta_j (z')  \cos(z') dz'  \\ 
& = \frac{ - \cos(z)}{\int_{-\ell^{(\sigma)}_j}^{\ell^{(\sigma)}_j}\cosh^{-2}(s) ds}\int_{I^{(\sigma)''}_j} \int_{- \ell(z)}^{\ell(z)} \wh{E}_j(s, z) \cosh^{-1}(s)  \sin(z') ds dz'  \\
& \quad +  \frac{\sin(z)}{\int_{-\ell^{(\sigma)}_j}^{\ell^{(\sigma)}_j}\cosh^{-2}(s) ds} \int_{I^{(\sigma)''}_j} \int_{-\ell (z)}^{\ell(z)} \wh{E}_j(s, z) \cosh^{-1}(s)  \cos(z') ds dz'  \\
& = - \frac{\cos(z)}{\int_{-\ell^{(\sigma)}_j}^{\ell^{(\sigma)}_j}\cosh^{-2}(s) ds} \int_{\Lambda_j^{(\sigma)''}}  \wh{E}_j(s, z) \cosh^{-1}(s)  \sin(z)   \\
& \quad +  \frac{\sin(z)}{\int_{-\ell^{(\sigma)}_j}^{\ell^{(\sigma)}_j}\cosh^{-2}(s) ds} \int_{\Lambda^{(\sigma)''}_j} \wh{E}_j(s, z) \cosh^{-1}(s)  \cos(z)   \\ 
&= 0.
\end{align*}
  Thus $\Theta_j(z)$ is supported on  $I^{(\sigma)''}_j$. For $z \in I^{(\sigma)''}_j$ we have
\begin{align*}
\left\|\Theta_j(z)  \right\|_{2, \alpha} & \leq^C  \left| I^{(\sigma)''}_j \right|\left\|\theta_j \right\|_{0, \alpha} \\
& \leq^C\beta h^{(\sigma)}_j \omega^{(\sigma)}_j. 
\end{align*}
\end{proof}

\begin{proof}[Proof of Proposition \ref{Prop:CoshProjectors}]
We set $\Theta_j =   \ol{\Theta}_j \cosh^{-1}(s) + \wh{\Theta}_j$ and $\Theta : = \sum_j \Theta_j$. Since $\wt{\mc{L}}\cosh^{-1}(s) = \cosh^{-1}(s)$, we have
\[
\mc{\wt{L}}\left( \ol{\Theta}_j \cosh^{-1}(s)\right) = \left(\ol{\Theta}_j'' + \ol{\Theta}_j \right) \cosh^{-1}(s) = \theta_j \cosh^{-1}(s) 
\]
and thus 
\begin{align*}
\int_{-\ell(z)}^{\ell(z)} \wt{\mc{L}}\Theta_j \cosh^{-1}(s) ds & = \int_{-\ell(z)}^{\ell(z)} \wt{\mc{L}}\wh{\Theta}_j \cosh^{-1}(s) ds +   \int_{-\ell(z)}^{\ell(z)} \wt{\mc{L}}\left(\ol{\Theta}_j\cosh^{-1}(s)\right) \cosh^{-1}(s) ds \\
& = \int_{-\ell(z)}^{\ell(z)} \left(E_j - \wh{E}_j\right) \cosh^{-1}(s) ds + \int_{-\ell(z)}^{\ell(z)}  \theta_j \cosh^{-2}(s) ds \\
& =  \int_{-\ell(z)}^{\ell(z)} \left(E_j - \wh{E}_j\right) \cosh^{-1}(s) ds  +\left(\frac{ \int_{-\ell(z)}^{\ell(z)} \cosh^{-2}(s) ds}{ \int_{-\ell^{(\sigma)}_j}^{\ell^{(\sigma)}_j} \cosh^{-2}(s) ds} \right)\int_{-\ell(z)}^{\ell(z)} \wh{E}_j \cosh^{-1}(s)ds \\
& = \int_{-\ell(z)}^{\ell(z)} E_j(s, z) \cosh^{-1}(s) ds  +\left(\frac{ \int_{-\ell(z)}^{\ell(z)} \cosh^{-2}(s) ds}{ \int_{-\ell^{(\sigma)}_j}^{\ell^{(\sigma)}_j} \cosh^{-2}(s) ds} - 1 \right)\int_{-\ell(z)}^{\ell(z)} \wh{E}_j \cosh^{-1}(s)ds\\
& =  e_j(z)\int_{-\ell(z)}^{\ell(z)} \cosh^{-2}(s) ds + e'_j \int_{-\ell(z)}^{\ell(z)} \cosh^{-2}(s) ds.
\end{align*}
where above we have put $e_j : = \psi^{(\sigma)}_j e$ and where $e'_j$ is defined implicitly above by the last equality.  We have
\begin{align*}
\left\| \int_{-\ell(z)}^{\ell(z)} \cosh^{-2}(s) ds -  \int_{-\ell^{(\sigma)}_j}^{\ell^{(\sigma)}_j} \cosh^{-2}(s) ds \right\|_{0, \alpha} &\leq C \left\| \ell(z) - \ell^{(\sigma)}_j \right\|_{0, \alpha} \cosh^{-2} 
\left(\ell^{(\sigma)}_j\right) \\
& \leq C \left(\lambda^{(1 - \tau) \epsilon}_j h_j^{(\sigma)} \cosh^{-2}\left(\ell^{(\sigma)}_j\right) \right)
\end{align*}
and thus
\begin{align*}
\left\| e'_j \right\|_{0, \alpha} \leq C  \beta \omega^{(\sigma)}_j\left( \lambda^{(1 - \tau) \epsilon}_j h_j^{(\sigma)} \cosh^{-2}(\ell_j) \right) \left(1 + \left(h^{(\sigma)}_j\right)^{-1}  \ell_j^{(\sigma)}\right)
\end{align*}
By Lemma \ref{Prop:EllBounds} we can take $\ol{\lambda}$ depending on $\ol{\sigma}$ such that 
\[
\left\|e'_j \right\|_{0, \alpha} \leq \delta \beta \omega^{(\sigma)}_j
\]
for arbitrary $\delta > 0$.  With $e' : = \sum_j e'_j$, we have from the above estimate, Remark \ref{EllComparable} and the fact the supports of $e'_j$ and $e'_k$ intersect only if $|k - j| \leq 1$ that 
\[
\left\| e' \right\|_{0, \alpha} \leq C \delta \beta \omega.
\]
We can then iterate to find an exact solution $\Theta$ satisfying the claimed bound in the statement of Proposition \ref{Prop:CoshProjectors}.
\end{proof}

\section{Proof of Theorem \ref{MainInvertibilityStatement}} \label{Sec:MainInvertibilityProof}
 We first record Lemma \ref{ExtensionLemma}, which constructs the kernel density preserving $C^{0, \alpha}$ extension map.

 \begin{lemma} \label{ExtensionLemma}
There is a bounded linear map $E \mapsto \check{E}$ from $C^{0, \alpha} (\Lambda) \rightarrow C^{0, \alpha} (\check{\Lambda})$ and such that 
\begin{enumerate}
\item $\check{E}$ is  supported away from the boundary of $\check{\Lambda}$.
\item $\check{E} (s, z) = E(s, z)$ for $(s, z) \in \Lambda$. 
\item The map $E \mapsto \check{E}$ preserves the $\tanh(s)$ and $\cosh(s)$ densities. That is, it holds that
\[
\int_{- \ell }^{\ell} E(s, z) \cosh^{-1}(s)ds = \int_{- \ell - 1}^{\ell + 1} \check{E}(s, z)  \cosh^{-1}(s)ds
\]
and
\[
\int_{- \ell}^{\ell} E(s, z) \tanh(s)ds = \int_{- \ell - 1}^{\ell + 1} \check{E}(s, z)  \tanh(s)ds
\]
\end{enumerate}
\end{lemma}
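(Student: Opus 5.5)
The plan is to define $\check{E}$ in two pieces: a linear extension of $E$ across the boundary curves $s=\pm\ell(z)$, followed by a correction that restores the two densities. The correction will be built from fixed profiles in the collar $\ell(z)<|s|<\ell(z)+1$, weighted by coefficients that depend linearly on $E$. Using coordinates adapted to the boundary makes everything depend only on $\ell(z)$. We have $|\ell'|\le C\lambda^{(1-\tau)\epsilon_0}$ by (\ref{EllDIFFIN}), and similar bounds hold for higher derivatives, so the change of coordinates is uniformly bounded in $C^{1,\alpha}$.

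\emph{Step 1 (cutoff extension).} In the collar on the right, write $t=s-\ell(z)\in[0,1]$ and set $E_0(s,z):=\chi(t)E(\ell(z),z)$. Here $\chi$ is a fixed smooth function on $[0,\infty)$ with $\chi(0)=1$ and $\chi\equiv0$ for $t\ge 1/3$. Do the same on the left side, and put $E_0=E$ on $\Lambda$. Then $E_0$ is $C^{0,\alpha}$ on $\check{\Lambda}$ with $\|E_0\|_{0,\alpha}\le C\|E\|_{0,\alpha}$, it agrees with $E$ on $\Lambda$, it vanishes for $|s|\ge\ell+1/3$, and it depends linearly on $E$. The extension only needs to be continuous across $|s|=\ell$, since we only claim $C^{0,\alpha}$, so this reflection-free construction is enough.

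\emph{Step 2 (density correction).} The densities now differ by
\[
d_1(z)=\int_{\ell<|s|<\ell+1}E_0\tanh(s)\,ds,\qquad d_2(z)=\int_{\ell<|s|<\ell+1}E_0\cosh^{-1}(s)\,ds.
\]
These are linear in $E$ and $C^{0,\alpha}$ in $z$, with norm at most $C\|E\|_{0,\alpha}$. Fix a bump $\eta(t)$ supported in $(1/3,2/3)$ with $\int\eta=1$. Define the odd and even correctors
\[
\Phi_{\mathrm{odd}}(s,z)=\operatorname{sgn}(s)\,\eta(|s|-\ell(z)),\qquad \Phi_{\mathrm{even}}(s,z)=\eta(|s|-\ell(z)),
\]
and set
\[
\check{E}=E_0-\frac{d_1}{\int\Phi_{\mathrm{odd}}\tanh}\,\Phi_{\mathrm{odd}}-\frac{d_2}{\int\Phi_{\mathrm{even}}\cosh^{-1}}\,\Phi_{\mathrm{even}}.
\]
By parity, $\Phi_{\mathrm{odd}}$ carries no $\cosh^{-1}$ density and $\Phi_{\mathrm{even}}$ carries no $\tanh$ density, so the two corrections decouple and both densities are preserved exactly. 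The corrections vanish on $\Lambda$ and for $|s|\ge\ell+2/3$, so property (1) holds.

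\emph{Step 3 (estimates).} The main point is boundedness of the map, and the delicate part is the $\cosh^{-1}$ normalization. The normalizer $\int\Phi_{\mathrm{even}}\cosh^{-1}$ is comparable to $e^{-\ell}$, which is tiny. However, $d_2$ is also an integral against $\cosh^{-1}(s)$ over the collar, so $|d_2|\le Ce^{-\ell}\|E\|_{0,\alpha}$. The ratio $d_2/\int\Phi_{\mathrm{even}}\cosh^{-1}$ is then bounded by $C\|E\|_{0,\alpha}$. To see this, compare $\cosh^{-1}(\ell+t)/\cosh^{-1}(\ell)$, which is uniformly comparable to $e^{-t}$ for $t\in[0,1]$.

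The Hölder seminorm of this ratio in $z$ has to be controlled as well. I would rewrite both integrals in the variable $t$ and factor out $\cosh^{-1}(\ell(z))$, which cancels between numerator and denominator. What remains are integrals of $E_0$ and $\eta$ against $\cosh(\ell)/\cosh(\ell+t)$, a function that is smooth in $z$ with bounded derivatives because $|\ell'|$ is bounded. The $\tanh$ coefficient is handled the same way, and it is easier since $\tanh(\ell+t)\approx 1$ there.

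Assembling the pieces gives $\|\check{E}\|_{0,\alpha}\le C\|E\|_{0,\alpha}$ locally. For weighted norms one also gets $\|\check{E}\|_{0,\alpha}\le C\beta\omega$, using Remark \ref{EllComparable}. Linearity is clear from the construction.
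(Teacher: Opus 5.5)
Your proposal is correct and follows essentially the same route as the paper. Both arguments use a cutoff extension across $|s|=\ell$ (you take the boundary trace $\chi(t)E(\pm\ell(z),z)$, the paper the reflection $E(2\ell(z)-s,z)$), then add collar correctors whose odd and even combinations decouple the $\tanh(s)$ and $\cosh^{-1}(s)$ conditions, keeping the $\cosh^{-1}$ coefficient bounded by factoring out $\cosh^{-1}(\ell)$ exactly as in the paper's $\wt{B}_\pm$ and $Q$. One small quibble: the hypotheses do not control higher derivatives of $\ell$, but your H\"older estimates only use the Lipschitz bound (\ref{EllDIFFIN}), so that claim can simply be dropped.
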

\begin{proof}
Fix a smooth monotonically decreasing function $\varphi: \mb{R} \rightarrow \mb{R}$  such that $\varphi (s) \equiv 1$ for $s \leq 0$ and $\varphi = 0$ for $s \geq 1/2$.  For  $s \geq \ell(z)$ we define $\check{E}$ by
\[
\check{E}(s, z) = \varphi(s - \ell(z) )\left( a_+(z)( s - \ell(z)) + E(2 \ell(z)  - s, z) \right)
\]
and for  $s \leq - \ell(z)$ we set
\[
\check{E}(s, z) = \left(\varphi(- \ell(z) - s )\right)\left( a_-(z)( -s - \ell) + E(- 2 \ell(z)  - s, z) \right)
\]
for constants $a_\pm(z)$ to be determined. We have
\begin{align*}
\int_{\ell}^{\ell+ 1}\check{E}(s, z) \tanh(s) ds & =\int_{\ell}^{\ell + 1} \varphi(s - \ell)  E(2 \ell(z) - s, z) \tanh(s) + a_+\int^{\ell + 1}_{\ell} \tanh (s) (s - \ell) \varphi (s -\ell) ds,\\ \\
\int_{\ell}^{\ell+ 1}\check{E}(s, z) \cosh^{-1}(s) ds & = \int_{\ell}^{\ell + 1} \varphi(s - \ell)  E(2 \ell(z) - s, z) \cosh^{-1}(s)   + a_+ \int^{\ell + 1}_{\ell} \cosh^{-1} (s) (s - \ell) \varphi (s - \ell) ds,
\end{align*}
and
\begin{align*}
\int_{-\ell - 1}^{-\ell}\check{E}(s, z) \tanh(s) ds & = \int_{-\ell - 1}^{- \ell} \varphi(-\ell - s)  E(-2 \ell(z) - s, z) \tanh(s)  -  a_-(z)\int_{-\ell - 1}^{-\ell } \varphi(- s - \ell) (s +   \ell(z)) \tanh (s)
\\ &  =   \int_{-\ell - 1}^{- \ell} \varphi(-\ell - s)  E(-2 \ell(z) - s, z) \tanh(s)  -  a_-(z)\int_{\ell}^{\ell + 1} \tanh(s) (s - \ell)\varphi( s - \ell)  ds,
 \\ \\
\int_{-\ell - 1}^{-\ell}\check{E}(s, z) \cosh^{-1}(s) ds & = \int_{-\ell - 1}^{- \ell} \varphi(-\ell - s)  E(-2 \ell(z) - s, z) \cosh^{-1}(s)  - a_-(z)\int_{-\ell - 1}^{-\ell } \varphi(- s - \ell) (s +  \ell(z)) \cosh^{-1}(s) \\
&=  \int_{-\ell - 1}^{- \ell} \varphi(-\ell - s)  E(-2 \ell(z) - s, z) \cosh^{-1}(s)  + a_-(z)\int_{\ell}^{\ell + 1 } \varphi(s - \ell) (s +  \ell(z)) \cosh^{-1}(s).
\end{align*}
Set: 
\[
A_+ : = \int_{\ell}^{\ell + 1} \varphi(s - \ell)  E(2 \ell(z) - s, z) \tanh(s) ds, \quad A_- : = \int_{-\ell - 1}^{- \ell} \varphi(-s - \ell)  E(-2 \ell(z) - s, z) \tanh(s)ds
\]
Assuming the first and third equations sum to zero we get
\[
a_+ - a_- = -\frac{A_+ + A_-}{\int_{\ell}^{\ell + 1 }(s - \ell(z) \varphi(s - \ell)  \tanh(s) ds }   = :  R.
\]
Clearly we have that 
\[
\| R\|_{0, \alpha} \leq^C \| E\|_{0, \alpha}.
\]
Also, write
\begin{align*}
B_+  & : =  \int_{\ell}^{\ell + 1} \varphi(s - \ell)  E(2 \ell(z) - s, z) \cosh^{-1} (s) \\
& = \cosh^{-1}(\ell) \int_{\ell}^{\ell + 1} \varphi(s - \ell)  E(2 \ell(z) - s, z) \cosh_{\ell}^{-1}(s) \\
& =   \cosh^{-1}(\ell)  \wt{B}_+,
\end{align*}
where above we have set $\cosh_{\ell}(s) = \cosh(s)/\cosh(\ell)$ and  similarly put
\begin{align*}
B_-  & : =  \int_{-\ell - 1}^{- \ell} \varphi(-s - \ell)  E(-2 \ell(z) - s, z) \cosh^{-1}(s)   \\
& = \cosh^{-1}(\ell)  \int_{-\ell - 1}^{- \ell} \varphi(-s - \ell)  E(-2 \ell(z) - s, z) \cosh_{\ell}^{-1} (s) \\
& =   \cosh^{-1}(\ell)  \wt{B}_-,
\end{align*}
Then assuming  the second and fourth equations sum to zero gives
\begin{align*}
a_+ +  a_- & =- \frac{B_+  + B_-}{\int_{\ell}^{\ell + 1 } \varphi(s - \ell) (s - \ell(z)) \cosh^{-1} (s)}\\
& =  \frac{\wt{B}_+  + \wt{B}_-}{\int_{\ell}^{\ell + 1 } \varphi(s - \ell) (s - \ell(z)) \cosh_{\ell}^{-1}(s) } \\
& =: Q,
\end{align*}
where $Q$ is defined implicitly by the last equality above. As before we have
\[
\| Q\|_{0, \alpha} \leq^C \| E\|_{0, \alpha}.
\]
Solving for $a_-$ and $a_+$ then gives 
\[
\| a_\pm\|_{0, \alpha} \leq^C \| E\|_{0, \alpha}.
\]
The extension $\check{E}$ then satisfies
\[
\| \check{E}\|_{0, \alpha} \leq^C \| E\|_{0, \alpha} .
\]
as well as the remaining claims. 
\end{proof}

\subsection{Proof of Theorem \ref{MainInvertibilityStatement}}  \label{Sec:SummingSolutions}

Let $E: \Lambda \rightarrow \mb{R}$ be a locally $C^{0, \alpha}$ function on $\Lambda$ with
\[
\left\| E \right\|_{0, \alpha} \leq \beta \omega.
\]
By Lemma \ref{ExtensionLemma} we can assume that $E$ is supported away from the boundary of $\check{\Lambda}$. With $E_{j} : = \psi_j E$, we have   $\left\| E_j\right\|_{0, \alpha} \leq^C \beta \omega_j$. By Theorem \ref{RBCSolutionsWeightedControl}, there is a function $v_j :[- \ol{\ell}_j^*- 1, \ol{\ell}^*_j + 1] \times \mb{R} \rightarrow \mb{R}$ with
\[
\wt{\mc{L}} v_j = E_j
\]
and satisfying the weighted estimate
\begin{align*}
\left\| v_j  \right\|_{2, \alpha} &  \leq^C \left(\ol{\ell}^*_j\right)^{5/2} \beta \omega_j  \left( \frac{1}{1 + |z - z_j|}\right)^{\xi}.\\
\end{align*}
Define the  functions $v_j^*$ and $E_j^*$  by:
\begin{align*}
v^*_j (s, z): = \psi_j^*(z)v_j (s, z), \quad E_j^* : = E_j - \tilde{\mathcal{L}} (\psi_j^*v_j).
\end{align*}
We then have
\begin{lemma} \label{DualObjectProperties}
The following statements hold:
\begin{enumerate} \notag
\item \label{LocalizedSupports}The functions $v^*_j$ and $E^*_j$  are supported on $R_j^*$ \\
\item \label{DualSolutionEstimate} The functions $v^*_j$ satisfy the estimates
\begin{align}
\left\| v^*_j \right\|_{2, \alpha} \leq   C\beta \omega_j \ol{\ell}^{5/2}_j \left(\frac{1}{1 + |z - z_j|} \right)^{\xi}.
\end{align}

\item \label{DualErrorEstimate} The functions $E^*_j$ satisfies the estimate
\begin{align}
\left\| E^*_j \right\|_{0, \alpha} \leq   C\beta \omega_j\ol{\ell}^{5/2 - \sigma}_j \left(\frac{1}{1 + |z - z_j|} \right)^{\xi}.
\end{align}

\item \label{LocalESTARESTIMATE} We have
\begin{enumerate}
\item 
\begin{align*}
\left|\int_{- \ell}^{\ell}E^*_j (s, z)\tanh(s) \right| & \leq^C \beta \frac{\ol{\ell}^{5/2 - \sigma}_j\omega_j}{1 + |z - z_j|^{\xi}}.
\end{align*} 
\item
\begin{align*}
 \left|\int_{- \ell}^{\ell}E^*_j (s, z)\cosh^{-1}(s) \right| & \leq^C \beta \frac{\ol{\ell}^{5/2 - \sigma}_j\omega_j \cosh^{-1} (\ell)}{1 + |z - z_j|^{\xi}}.
\end{align*}
\end{enumerate}
\end{enumerate}
\end{lemma}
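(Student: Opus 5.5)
We prove the four items in order. Throughout, $\psi^*_j$ is the cutoff $\varphi_0((z-z_j)/h^*_j)$ introduced in Section \ref{POFUABF}. It equals $1$ on a neighbourhood of $\operatorname{supp}\psi_j\subset I''_j$, is supported in $I^*_j$, and satisfies $|(\psi^*_j)^{(k)}|\leq C (h^*_j)^{-k} = C\ell_j^{-k\sigma}$.

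\emph{Support (item \ref{LocalizedSupports}).} Since $\psi^*_j$ is supported in $I^*_j$, so is $v^*_j$. On the set where $\psi^*_j\equiv 1$ we have $\wt{\mc{L}}(\psi^*_j v_j)=\wt{\mc{L}}v_j=E_j$, so $E^*_j$ vanishes there. Outside $I^*_j$ both $E_j$ and $\psi^*_j v_j$ vanish. Hence $E^*_j$ is supported in the slab $I^*_j\times\mb{R}$, intersected with the strip $[-\ol\ell^*_j-1,\ol\ell^*_j+1]$ on which $v_j$ lives, that is, in $\check R^*_j$.

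\emph{Estimate for $v^*_j$ (item \ref{DualSolutionEstimate}).} This follows directly from the decay estimate of Theorem \ref{RBCSolutionsWeightedControl} for $v_j$, together with the fact that multiplication by $\psi^*_j$ changes $C^{2,\alpha}$ norms by at most a constant, since its derivatives are bounded by powers of $h_j^{*-1}\leq 1$. We also need $\ol\ell^*_j\leq C\ol\ell_j$, which is the comparability of $\ell$ over $I^*_j$ in Remark \ref{EllComparable}, coming from (\ref{EllDIFFIN}) and Lemma \ref{Prop:EllBounds}.

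\emph{Estimate for $E^*_j$ (item \ref{DualErrorEstimate}).} Expand
\[
E^*_j = -2(\psi^*_j)'\partial_z v_j-(\psi^*_j)''v_j,
\]
using $\psi^*_j E_j=E_j$. Each term carries at least one factor $(h^*_j)^{-1}=\ell_j^{-\sigma}$. Combined with the weighted bound on $v_j$, this gives the bound $C\beta\omega_j\ol\ell_j^{5/2-\sigma}(1+|z-z_j|)^{-\xi}$, again using Remark \ref{EllComparable} to replace $\ell_j$ by $\ol\ell_j$.

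\emph{Kernel densities (item \ref{LocalESTARESTIMATE}).} This is the step needing care. Theorem \ref{RBCSolutionsWeightedControl} guarantees that $v_j$ satisfies the strong orthogonality conditions on the long segments $[-\ol\ell^*_j-1,\ol\ell^*_j+1]\times\{z\}$. Since $\psi^*_j$ depends only on $z$, the term $-2(\psi^*_j)'\partial_zv_j-(\psi^*_j)''v_j$ is also strongly orthogonal on those long segments. We therefore rewrite the density over $[-\ell(z),\ell(z)]$ as minus the integral over the complementary pieces $\ell(z)\leq|s|\leq\ol\ell^*_j+1$.

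For $\tanh(s)$ we bound the complementary integral crudely. Its length is $O(\ol\ell^*_j-\ell(z)+1)=O(1)$ by (\ref{EllDIFFIN}) and Lemma \ref{Prop:EllBounds}, and the integrand is pointwise bounded by item (\ref{DualErrorEstimate}), which yields (a).

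For $\cosh^{-1}(s)$ the same argument gains the factor $\sup_{|s|\geq\ell(z)}\cosh^{-1}(s)\leq C\cosh^{-1}(\ell)$, which gives (b). The only point to check is that the complementary region has length $O(1)$ uniformly on $I^*_j$. That is, we need $\ol\ell^*_j-\ell(z)\leq C\,h^*_j\lambda_j^{(1-\tau)\epsilon_0}\leq 1$, which holds for $\ol\lambda$ small by Lemma \ref{Prop:EllBounds}. If the pointwise estimate in (\ref{DualErrorEstimate}) were not available near $|s|\approx\ol\ell^*_j$, one could instead use the boundary conditions of Theorem \ref{RBCSolutionsWeightedControl}, but the pointwise bound should suffice.
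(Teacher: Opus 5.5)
Your proposal is correct and follows the paper's route: items 1--3 come directly from the decay estimate for $v_j$ and the bounds $|(\psi^*_j)^{(k)}|\leq C(h^*_j)^{-k}$. Item 4 writes the density over $[-\ell(z),\ell(z)]$ as minus the density over the complementary pieces of the long segment, on which $E^*_j$ is strongly orthogonal. Your explicit formula $E^*_j=-2(\psi^*_j)'\partial_z v_j-(\psi^*_j)''v_j$ makes that long-segment orthogonality transparent, where the paper only asserts it; likewise your support statement in $\check R^*_j$ (with endpoint $\ol\ell^*_j+1$ rather than the paper's $\ol\ell_j+1$) is the precise version of the paper's claim, which holds for $R^*_j$ after restricting to $\Lambda$.
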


\begin{proof}
Statement (\ref{LocalizedSupports}) is a direct consequence of the definition of $\psi^*_j$ in Section \ref{POFUABF}.  Statements  (\ref{DualSolutionEstimate}) and  (\ref{DualErrorEstimate}) follow directly for the weighted estimate for $v_j$, the definition of $\psi^*_j$ and the weighted estimate for solutions in the local strongly orthogonal case in Proposition \ref{RBCSolutionsWeightedControl}. For (\ref{LocalESTARESTIMATE}), observe that
\[
\int_{- \ol{\ell}_j - 1}^{\ol{\ell}_j + 1} E_j^*(s, z) \tanh(s) ds = 0
\]
and thus
\begin{align*}
\int_{\ell}^{\ell} E_j^*(s, z) \tanh(s) & \leq^C \| E^*_j\|_{0, \alpha} |\ell - (\ol{\ell}_j + 1)| \\
& \leq^C \beta \frac{\omega_j \ol{\ell}^{5/2- \sigma }_{j}}{1 + |z - z_j|^{\xi}} \\
\end{align*}
 Similarly, we have
\begin{align*}
\int_{\ell}^{\ell} E_j^*(s, z) \cosh^{-1}(s) ds &  \leq C \| E^*_j\|_{0, \alpha}\cosh^{-1} (\ell) |\ell - (\ol{\ell}_j + 1)| \\
& = \beta\frac{\omega_j \ol{\ell}^{5/2 -  \sigma}_{j} }{1 + |z - z_j|^{\xi}} \cosh^{-1} (\ell).
\end{align*}

\end{proof}
\begin{lemma} \label{FirstIterationFunctions}
Set 
\begin{align} \notag
v^* : = \sum_j v^*_j, \quad E^* : = \sum_j E^*_j.
\end{align}
Then the following statements hold:
\begin{enumerate}
\item \label{LocallyFiniteSums} The infinite sums defining $v^*$ and $E^*$ are locally finite and thus converge on compact subsets of $\Lambda$. 
\item \label{VStarEstimate} The function $v^*$ satisfies the estimate
\begin{align} \notag
\left\| v^*\right\|_{2, \alpha} \leq C \beta \ell^{5/2 + (1 - \xi) \sigma}\omega.
\end{align}
\item \label{EStarEstimate} The function $E^*$ satisfies the estimate
\begin{align} \notag
\left\| E^*\right\|_{0, \alpha} \leq C \beta \ol{\ell}^{5/2 - \xi \sigma}  \omega.
\end{align}

\item \label{ESTARDENSITY}
It holds that 
\[
\left|\int_{-\ell}^{\ell}E^*(s, z)\cosh^{-1}(s) \right| \leq^C  \beta \ell^{5/2 - \xi \sigma }\omega \cosh^{-1} (\ell),  \quad \left|\int_{-\ell}^{\ell}E^*(s, z)\tanh(s) \right| \leq^C  \beta\ell^{5/2 - \sigma \xi}\omega.
\]
\end{enumerate}
\end{lemma}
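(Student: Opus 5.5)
The plan is to prove all four statements by fixing a point $p = (s,z) \in \Lambda$ and summing the single-piece bounds of Lemma \ref{DualObjectProperties} over only those indices $j$ whose supports contain $p$. Corollary \ref{Prop:DomainIntersection} controls how many such indices there are, and Remark \ref{EllComparable} together with Lemma \ref{ZBetterInterval} lets us replace $\omega_j$, $\ell_j$ and $\ol{\ell}_j$ by $\omega(z)$ and $\ell(z)$.

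For (\ref{LocallyFiniteSums}), fix $p$ with $z \in I_k$. By Lemma \ref{DualObjectProperties}(\ref{LocalizedSupports}), $v^*_j$ and $E^*_j$ can be nonzero near $p$ only if $R^*_j$ meets $R_k$ or one of its neighbors. Corollary \ref{Prop:DomainIntersection} then forces $|j-k| \leq 2h^*_k + 2$, so only finitely many terms contribute on any compact set, and the sums converge.

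For (\ref{VStarEstimate}) and (\ref{EStarEstimate}), I use the same index set $J_k = \{j : |j-k| \leq 2h^*_k + 2\}$. For $j \in J_k$ we have $|z_j - z_k| \leq C \ell_k^{\sigma} \leq \lambda_k^{-\epsilon_0/2}$ by Lemma \ref{Prop:EllBounds}. Hence $\lambda_j$ and $\lambda_k$ are comparable by Lemma \ref{ZBetterInterval}, and so are $\ell_j$, $\ol{\ell}_j$ and $\ol{\ell}^*_j$ with $\ell(z)$; this uses the derivative bound (\ref{EllDIFFIN}) exactly as in the proof of Corollary \ref{Prop:DomainIntersection}. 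Since $\lambda^{-3\epsilon_0/4} \geq \lambda^{-\epsilon_0/2}$, the weight condition (\ref{WeightFunctionCondition}) gives $\omega_j \leq C\omega(z)$. Summing the bound in Lemma \ref{DualObjectProperties}(\ref{DualSolutionEstimate}) then gives
\[
\|v^*\|_{2,\alpha}(p) \leq C\beta \omega(z) \ell(z)^{5/2}\sum_{j\in J_k} (1+|z-z_j|)^{-\xi} \leq C\beta\omega \ell^{5/2} (h^*_k)^{1-\xi},
\]
where the last step is the integral comparison $\sum (1+|z-z_j|)^{-\xi} \leq C\int_0^{Ch^*}(1+x)^{-\xi}dx$ from the introduction. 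With $h^*_k = \ell_k^{\sigma}$ this is the claimed $\ell^{5/2+(1-\xi)\sigma}\omega$. For $E^*$ the same summation applies to the bound in Lemma \ref{DualObjectProperties}(\ref{DualErrorEstimate}). The extra factor $\ell^{-\sigma}$ there, which comes from $(\psi^*_j)' \sim (h^*_j)^{-1}$, turns $\ell^{5/2}(h^*)^{1-\xi}$ into $\ell^{5/2-\xi\sigma}$.

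For (\ref{ESTARDENSITY}), I sum the pointwise density bounds (\ref{LocalESTARESTIMATE})(a) and (b) of Lemma \ref{DualObjectProperties} over $J_k$ in the same way. The factor $\cosh^{-1}(\ell)$ depends only on $z$, so it passes through the sum unchanged.

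The main obstacle is this uniform comparability step. The number of terms, of order $\ell^\sigma$, must stay within the window where $\lambda$, $\ell$ and $\omega$ vary by bounded ratios, which requires choosing $\ol{\lambda}$ small in terms of $\ol{\sigma}$ and $\tau < 1/(4\ol{\sigma})$. A second point is that the constants in Theorem \ref{RBCSolutionsWeightedControl} are uniform in the strip width $\ol{\ell}^*_j$.
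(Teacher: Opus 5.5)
Your proposal is correct and follows the paper's proof essentially step for step. You fix the slab containing the point, use Corollary \ref{Prop:DomainIntersection} to restrict to the $O(h^*)$ indices whose truncation rectangles reach it, replace $\omega_j,\ell_j,\ol{\ell}_j$ by their values at the point, and sum the bounds of Lemma \ref{DualObjectProperties} via $\sum_j(1+|z-z_j|)^{-\xi}\lesssim (h^*)^{1-\xi}=\ell^{(1-\xi)\sigma}$. Your explicit check that $\omega_j\le C\omega(z)$, via (\ref{WeightFunctionCondition}) and Lemma \ref{Prop:EllBounds}, spells out a comparability step the paper only points to through Remark \ref{EllComparable}.
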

\begin{proof}
Fix a $j \in \mathbb{N}$.  By Corollary \ref{Prop:DomainIntersection}, the supports of $v^*_k$ intersect $\lambda_j$ only if $j - k < 2 h^*_j$. Thus on   $\Lambda_j$ the sums defining $v^*$ are finite and given by. 
\begin{align} \notag
\left\|\sum_{k} v^*_k \right\|_{2,\alpha}& = \sum_{ |k -j| \leq  2 h^*_j}\left\| v^*_k \right\|_{2, \alpha} \\ \notag
 & \leq  C \beta \sum_{ |k - j| \leq  2h^*_j} \frac{ \ol{\ell}^{5/2}_{j}\omega_k}{(1 + |z_k - z_j|)^{\xi}} \\ \notag
 & \leq C \beta \ol{\ell}^{5/2}_{j} \omega_j  \int_{- 4\pi h^*_j}^{ 4 \pi h^*_j} \left(\frac{1}{1 + |w| } \right)^{\xi }dw \\ \notag
 & \leq  C  \beta\ol{\ell}^{5/2}_j\omega_j (h^*_j)^{1 - \xi }\\ \notag
  & \leq  C  \beta\ol{\ell}^{5/2}_j\omega_j (\ell^{\sigma}_j)^{1 - \xi }\\ \notag
  & \leq C \beta \ell^{5/2 + (1 - \xi) \sigma} \omega
\end{align} 
This   gives  (\ref{VStarEstimate}). Claim (\ref{EStarEstimate}) follows similarly:  On $\Lambda_j$ we  again have
\begin{align*} \notag
\sum_{k} E^*_k & = \sum_{k :  |k - j| \leq  2 h^*_j} E^*_k \\ 
&  \leq C\beta   \sum_{k:  |k - j| \leq 2 h^*_j} \frac{ \ol{\ell}_k^{5/2 -\sigma }\omega_k}{\left(1 + |z_k - z_j|\right)^{\xi}} \\ 
& \leq C\beta\ol{\ell}_j^{5/2- \sigma }\omega_j\int_{- 4\pi  h^*_j}^{4 \pi h^*_j}  \frac{dw}{\left(1 + w\right)^{\xi} } \\ 
& \leq  C\beta \ol{\ell}_j^{5/2 - \sigma}  \omega_j \left(h^*_{j}\right)^{1 - \xi}\\
& \leq^C \beta \ol{\ell}_j^{5/2 - \sigma} \left(\ell^{\sigma}_j\right)^{1-\xi} \omega_j \\
&  \leq^C \beta \ol{\ell}_j^{5/2 - \xi \sigma}\omega_j.
\end{align*}
  For (\ref{ESTARDENSITY}),  we have as above
\begin{align*}
\int_{-\ell}^{\ell} E^* \tanh(s) & = \sum_{k: |j-  k| \leq 2 h^*_j} \int_{-\ell}^\ell E_k^* \tanh(s) ds \\
& \leq^C\beta \ol{\ell}_j^{5/2 - \sigma }\omega_j  \sum_{k: |j-  k| \leq 2 h^*_j}   \frac{1}{1 + (|z_k - z_j|)^{\xi}} \\
& \leq^C \beta  \ol{\ell}_j^{5/2 - \xi \sigma }\omega_j
\end{align*}
Similarly we have
\[
\int_{-\ell}^{\ell} E^* \cosh^{-1}(s)  \leq^C \beta\ol{\ell}_j^{5/2 - \xi \sigma }\omega_j\cosh^{-1} (\ell).
\]

\end{proof}

Theorem \ref{MainInvertibilityStatement} now follows immediately.
\begin{proof}[Proof of Theorem \ref{MainInvertibilityStatement}]
Let $E : = E_0$ be a function as in the statement of the proposition with
\begin{align} \notag
\| E \|_{0, \alpha} \leq \beta \omega.
\end{align}
We then apply Lemma \ref{FirstIterationFunctions} to obtain functions $v^*$, $E^*$ satisfying the following estimates:
\begin{align} \notag
\| v^* \|_{2,\alpha} \leq C \beta \ell^{5/2 + (1 - \xi) \sigma } \omega, \quad  \| E^*\|_{2, \alpha} \leq C \beta \ell^{5/2 - \xi \sigma} \omega.
\end{align}
We will assume that $\sigma$ is chosen sufficiently large so that $5/2  - \xi \sigma  <0$. 
Let $e_T$ and $e_C$  denote the $\tanh(s)$ and $\cosh^{-1}(s)$ densities of $E^*$, respectively, so:
\[
e_T : = \frac{\int_{-\ell}^{\ell}E^*(s, z)\tanh(s)ds}{\int_{\ell}^\ell \tanh^2(s)}, \quad e_C   : = \frac{\int_{-\ell}^{\ell}E^*(s, z)\cosh^{-1}(s)ds}{\int_{\ell}^\ell \cosh^{-2}(s)}.
\]
By Lemma \ref{FirstIterationFunctions} (\ref{ESTARDENSITY}) we have
\[
\| e_T\|_{0, \alpha} \leq^C \beta \ell^{5/2 - \xi \sigma } \omega , \quad \| e_C\|_{0, \alpha}  \leq \beta \ell^{5/2 - \xi\sigma} \omega\cosh^{-1} (\ell).
\]
We claim  that  the weight functions  $\wt{\omega}_{T} : = \ell^{5/2 - \xi\sigma} \omega$ and  $\wt{\omega}_{C}  = \ell^{5/2 - \xi\sigma}  \cosh^{-1}(\ell) \omega$   satisfy (\ref{WeightFunctionCondition}) and are thus admissible weight fucntions. To see this, pick $z$ and $z_0$ with $|z - z_0| < \lambda^{- 3 \epsilon_0/4}(z_0)$. Then with $p = 5/2 - \xi \sigma$ we have
\begin{align*}
\left|\frac{\wt{\omega}_T(z)}{\wt{\omega}_T (z_0)} - 1\right|  & =\left| \left(\frac{\ell(z)}{\ell(z_0)}\right)^{p}\frac{\omega(z)}{\omega(z_0)}  - 1\right|\\
&= \left| \left(\left(\frac{\ell(z)}{\ell(z_0)}\right)^{p} - 1\right)\frac{\omega(z)}{\omega(z_0)} + \frac{\omega(z)}{\omega(z_0)}  - 1\right|  \\
& \leq  \left|\left(\frac{\ell(z)}{\ell(z_0)}\right)^{p} - 1\right| \left(1 + \rho_0 \right)  +  \rho_0 \\
\end{align*}
With $\lambda_0 : = \lambda(z_0)$ we have using the derivative estimate (\ref{EllDIFFIN}) for $\ell_\tau$ that
\begin{align*}
\ell(z) - \ell(z_0) & \leq C \lambda_0^{(1 - \tau) \epsilon_0} \lambda_0^{-3/4 \epsilon_0} \\
& \leq C \lambda_0^{(1/4 - \tau) \epsilon_0}
\end{align*}
and thus
\begin{align*}
\left|\frac{\ell(z)}{\ell(z_0)} - 1\right| & = \left|\frac{\ell(z) - \ell(z_0)}{ \ell(z_0)} \right|\\
& \leq C \ell^{-1}(z_0) \lambda_0^{(1/4 - \tau) \epsilon_0} \\
& \leq C \ell^{-1} (z_0) \lambda_0^{\epsilon_0/8}
\end{align*}
 assuming $\ol{\tau} < 1/8$. Given $q > 0$ we  can  then take $\ol{\lambda}$ sufficiently small so that 
 \[
 \left|\frac{\ell(z)}{\ell(z_0)} - 1\right| < q
 \]
 which gives
 \begin{align*}
 \left|\frac{\wt{\omega}_T(z)}{\wt{\omega}_T (z_0)} - 1\right|  & \leq \left(\left( 1 + q\right)^{p} -1 \right)(1 + \rho_0) + \rho_0 \\
 & \leq C| p| \log(1 + q) \left( 1 + \rho_0 \right) + \rho_0 \\
 & \leq C \ol{\sigma} q  \left( 1 + \rho_0 \right) + \rho_0
 \end{align*}
 where above we have used the that $p < 0$ and thus $|p| \leq \xi \sigma <  \ol{\sigma}$ since $\xi \in (0, 1)$.
 Taking $q$ small, we then have
 \[
  \left|\frac{\wt{\omega}_T(z)}{\wt{\omega}_T (z_0)} - 1\right| \leq \wt{\rho}_0 = \frac{1 + \rho_0}{2}.
 \]
For the weight function $\wt{\omega}_C$ we write
\begin{align*}
\frac{\cosh^{-1} \left(\ell(z)\right)}{\cosh^{-1}\left(\ell(z_0) \right)} & \approx e^{\ell_0 - \ell} \\
& \approx \exp\left(C\lambda^{(1 -  \tau) \epsilon_0}(z_0) \lambda^{- 3/4 \epsilon_0}(z_0) \right) \\
& \approx \exp \left(C\lambda^{( 1/4 -  \tau) \epsilon_0}(z_0) \right) \\
& \lesssim\exp \left(C\lambda^{( 1/8) \epsilon_0}(z_0) \right)
\end{align*}
and thus similar reasoning gives
\[
\left|\frac{\wt{\omega}_C(z)}{\wt{\omega}_C(z_0)} - 1 \right| \leq \wt{\rho}_0 < 1
\]
for $\ol{\lambda}$ sufficiently small.  We can thus apply   Propositions \ref{Prop:TanhProjectors} and \ref{Prop:CoshProjectors} with weights $\wt{\omega}_{T}$ and $\wt{\omega}_{C}$, respectively,   to get functions $\Theta_T$ and $\Theta_C$ such that 
\[
\int_{- \ell}^\ell \wt{\mc{L}}\Theta_T = e_T\int_{\ell}^{\ell} \tanh^2(s) ds, \quad \int_{- \ell}^\ell \wt{\mc{L}}\Theta_C = e_C\int_{\ell}^{\ell} \cosh^{-2}(s) ds
\]
and the estimates
\begin{align*}
\left\| \Theta_T\right\|_{2, \alpha}&  \leq^C  \beta \ell^{9/2 - \xi\sigma}\omega, \\  \\
 \left\| \Theta_C\right\|_{2, \alpha}&  \leq^C  \beta \ell^{5/2 - \xi \sigma}\omega \cosh^{-1} (\ell)  \cosh(s), \\ \\
\left\|\wt{\mc{L}} \Theta_T \right\|_{0, \alpha}  & \leq^C   \beta \ell^{9/2 - \xi\sigma}\omega, \\ \\
\left\|\wt{\mc{L}} \Theta_C \right\|_{0, \alpha} & \leq^C \beta \ell^{5/2 - \xi \sigma}\cosh^{-1} (\ell)
(\cosh^{-1}(s) +  \ell^{- \sigma} \cosh(s)) \\
& \leq^C  \beta \ell^{5/2 - (1 + \xi )\sigma},
\\
\end{align*}
where in the last line above we have assume $\ol{\lambda}$ is sufficiently small so that $\cosh^{-1}(\ell) < \ell^{- 1} < \ell^{- \sigma}$.
With $\Theta = \Theta_T + \Theta_C$, the function 
\[
E_1 : = E^* - \wt{\mc{L}}\Theta.
\]
Then satisfies the estimate
\begin{align}\label{E1Estimate}
\| E_1\| & \leq \| E^*\| + \| \wt{\mc{L}} \Theta \| \\
& \leq^C  \beta \ell^{5/2 - \xi \sigma} \omega \notag
 + \beta \ell^{9/2 - \xi\sigma}\omega  + \beta \ell^{5/2 - (1 + \xi) \sigma} \omega \\ \notag
& \leq^C \beta \omega\left(  \ell^{5/2 - \xi \sigma}
 +  \ell^{9/2 - \xi\sigma}  + \ell^{5/2 - (1 + \xi) \sigma}  \right). \notag
\end{align}
We take $\sigma$ large enough so that the exponents of $\ell$ appearing on the  right hand side above are negative. Then, taking $\ol{\lambda}$ sufficiently small, we have
\[
\| E_1 \|_{0, \alpha} \leq \delta  \beta \omega
\]
for any arbitrary $\delta > 0$.
Setting $v_0 : = v^* + \Theta$ we have
\begin{align}\label{V0EstimateII}
\| v_0\|_{2, \alpha} & \leq^C \beta \ell^{5/2 + (1 - \xi) \sigma } \omega
 + \beta \ell^{9/2 - \xi\sigma}\omega + \beta \ell^{5/2 - \xi \sigma}\omega \cosh^{-1} (\ell)  \cosh(s) \\\notag
& \leq^C \beta \omega\left(\ell^{5/2 + (1 - \xi) \sigma } 
 + \beta \ell^{9/2 - \xi\sigma} + \beta \ell^{5/2 - \xi \sigma} \cosh^{-1} (\ell)  \cosh(s) \right) \\ \notag
 & \leq C \beta \ell^{5/2 + (1 - \xi) \sigma}\left(1 + \ell^{2 - \sigma} + \ell^{- \sigma} \right) \notag
\end{align}
Thus, assuming $\sigma > 3$ we have the estimate
\begin{align} \label{V0Estimate}
\|v_0\|_{2, \alpha}  \leq^C\beta  \ell^{5/2  + (1-  \xi) \sigma } \omega.
\end{align}
We then iterate to obtain an exact solution satisfying estimate (\ref{V0Estimate}). At this point, we make explicit choices for the parameters $\sigma$  and $\xi$. The parameter $\xi$ can be chosen freely in $(0, 1)$ and we take $\xi = \frac{1}{4}$.  We then chose $\sigma =  20$,   so that the exponents of $\ell$ appearing in (\ref{E1Estimate}) and (\ref{V0EstimateII}) are negative.  This then proves the assertions of Theorem \ref{MainInvertibilityStatement} with $c  = 5/2 + (1 - \xi) \sigma = 5/2+ (1 - 1/4) 20 = 5/2 + 15 = 35/2$.

\end{proof}

 \bibliographystyle{amsalpha}

\end{document}